\documentclass[12pt,reqno]{amsart}
\usepackage[foot]{amsaddr}

\usepackage{amssymb,mathabx,wasysym}
\usepackage{graphicx}
\usepackage[margin=1in]{geometry}
\usepackage{enumerate}
\usepackage[dvipsnames]{xcolor}
\usepackage[colorlinks,
    citecolor=Green!80!black,
    linkcolor=Green!80!black
]{hyperref}
\usepackage[font=footnotesize]{caption}
\usepackage{subcaption}

\newtheorem{thm}{Theorem}[section]
\newtheorem{lem}[thm]{Lemma}
\newtheorem{cor}[thm]{Corollary}
\newtheorem{ppn}[thm]{Proposition}
\theoremstyle{definition}
\newtheorem{dfn}[thm]{Definition}
\newtheorem{rmk}[thm]{Remark}

%Some general macros
\newcommand{\E}{\mathbb{E}}
\renewcommand{\P}{\mathbb{P}}
\newcommand{\R}{\mathbb{R}}
\newcommand{\N}{\mathbb{N}}
\renewcommand{\emptyset}{\varnothing}
\newcommand{\set}[1]{\{#1\}}
\newcommand{\seq}[1]{\left(#1\right)}
\newcommand{\Ind}[1]{\mathbf{1}\{#1\}}
\renewcommand{\vec}[1]{\underline{#1}}

%preliminary
\newcommand{\pd}{\partial}
\newcommand{\tmix}{t_\textup{mix}}
\newcommand{\tcoup}{t_\textup{coup}}
\newcommand{\dtv}[1]{\|#1\|_\textup{TV}}

%Grand Coupling
\newcommand{\bX}{\textbf{X}}
\newcommand{\bY}{\textbf{Y}}
\newcommand{\usi}{{\vec{\sigma}}}
\newcommand{\uta}{{\vec{\tau}}}

%Information percolation
\newcommand{\tG}{\tilde{G}}
\newcommand{\tV}{\tilde{V}}
\newcommand{\tGF}{\tilde{G}_F}

\newcommand{\proj}{\textup{proj}}

\newcommand{\bN}{\mathbb{N}}
\newcommand{\tgm}{\tilde{\gamma}}
\newcommand{\tGm}{\tilde{\Gamma}}
\newcommand{\tE}{\tilde{\E}}

\newcommand{\tI}{\textup{I}}
\newcommand{\tII}{\textup{II}}
\newcommand{\tg}{\textup{g}}
\newcommand{\tm}{\textup{m}}
\newcommand{\Ig}{I_\textup{g}}
\renewcommand{\Im}{I_\textup{m}}

\newcommand{\tc}{\textup{c}}
\newcommand{\tpd}{\textup{d}}
\newcommand{\tv}{\tilde{v}}
\newcommand{\ta}{\tilde{a}}
\newcommand{\tlt}{\tilde{t}}
\newcommand{\Ic}{I_\textup{c}}

\newcommand{\A}{\mathsf{A}}
\newcommand{\BB}{\mathsf{B}}
\newcommand{\CC}{\mathsf{C}}
\newcommand{\DD}{\mathsf{D}}
\newcommand{\EE}{\mathsf{E}}
\newcommand{\FF}{\mathcal{F}}
\newcommand{\MM}{\mathsf{M}}
\newcommand{\NN}{\mathsf{N}}
\newcommand{\GG}{\textup{\textsf{G}}}

\newcommand{\cyc}{\mathsf{cyc}}

\newcommand{\bpp}{}

\newcommand{\ehl}{\color{black}}

\title{Rapid mixing of hypergraph independent sets}

\author[J.~Hermon]{Jonathan Hermon$^{\ddagger}$}
\email{jonathan.hermon@statslab.cam.ac.uk}
\author[A.~Sly]{Allan Sly$^{\dagger,*}$}
\email{sly@berkeley.edu}
\author[Y.~Zhang]{Yumeng Zhang$^*$}
\email{ymzhang@berkeley.edu}

\address{{Princeton University$^\dagger$}, {University of Cambridge, Cambridge, UK, Financial support by
the EPSRC grant EP/L018896/1$^{\ddagger} $ } and 
{University of California, Berkeley$^*$}
}

%\date{\today}
\keywords{Mixing time, hypergraph independent sets, approximate counting.}
\raggedbottom

\begin{document}

\begin{abstract}
We prove that the the mixing time of the Glauber dynamics for sampling independent sets on $n$-vertex $k$-uniform hypergraphs is $O(n\log n)$ when the maximum degree $\Delta$ satisfies $\Delta \leq c 2^{k/2}$, improving on the previous bound~\cite{bordewich2006stopping} of $\Delta \leq k-2$.  This result brings the algorithmic bound to within a constant factor of the hardness bound of~\cite{Bezakova2015} which showed that it is NP-hard to approximately count independent sets on hypergraphs when $\Delta \geq 5 \cdot 2^{k/2}$.
\end{abstract}
\maketitle

\section{Introduction} 

We consider the mixing time of the Glauber dynamics for sampling from uniform independent sets on a $k$-uniform hypergraph (i.e.,~all hyperedges are of size $k$).  In doing so we extend the region where there is a \bpp fully polynomial-time randomized approximation scheme \ehl(FPRAS) for approximately counting independent sets, reducing an exponential multiplicative gap to a constant factor.

In the case of graphs the question of approximately counting and sampling independent sets is already well understood.  In a breakthrough paper, Weitz~\cite{weitz2006counting} constructed an algorithm which approximately counts independent sets on 5-regular graphs by constructing a tree of self-avoiding walks to calculate marginals of the distribution.  These can be approximated efficiently because of decay of correlation giving rise to a \bpp fully polynomial-time approximation scheme (FPTAS) \ehl for the problem.  This was shown to be  tight~\cite{sly2010computational} via a construction based on random bipartite graphs, proving that it is NP-hard to approximately count independent sets on 6-regular graphs.  The key difference between 5 and 6 is that on the infinite 5-regular tree, there is exponential decay of correlation of random independent sets while long range correlations are possible on the 6-regular tree.

In terms of statistical physics the difference is that there is an unique Gibbs measure on the $\Delta$-regular tree for $\Delta \leq 5$ but the existence of multiple Gibbs measures when $\Delta \geq 6$.  This paradigm extends more broadly to other spin systems such as the hardcore model (a model of weighted independent sets) and the anti-ferromagnetic Ising model.  In both cases a similar construction to \cite{weitz2006counting,sly2010computational} shows that it is NP-hard whenever these models have non-uniqueness \cite{sly2012computational} and Weitz's algorithm gives an FPTAS~\cite{sinclair2014approximation} in the uniqueness case except for certain critical boundary cases.  Together with work of Jerrum and Sinclair~\cite{jerrum1993polynomial} the problem of approximately counting in 2-spin systems on regular graphs is essentially complete.

For hypergraphs, however, even in two spin systems the question remains wide open.
A hypergraph $H=(V,F)$ consists of a vertex set $V$ and a collection $F$ of vertex subsets, called the hyperedges. An independent set of $H$ is a set $I\subseteq V$ such that no hyperedge $a\in F$ is a subset of $I$. The \bpp natural analogy \ehl with graphs would predict that the threshold for approximate counting should correspond to the uniqueness threshold for the $\Delta $-regular tree which corresponds to $\bpp \Delta = (1+o(1)) \frac1{2k} 2^k$.  This turns out to be false and in fact~\cite{Bezakova2015} showed that it is NP-hard to approximately count independent sets when $\bpp \Delta \geq 5 \cdot 2^{k/2}$.  What breaks down is that Weitz's argument requires not just exponential decay of correlation but also a stronger notion known as strong spatial mixing (SSM) which fails to hold when $\Delta  \geq 6$ for all $k\geq 2$~\cite{Bezakova2015}.

Despite the lack of SSM, Bez\'{a}kov\'{a}, Galanis, Goldberg, Guo, \v{S}tefankovi\v{c}~\cite{Bezakova2015} were able to give a modified analysis of the Weitz's tree of self avoiding walks algorithm and gave a deterministic FPTAS for approximating the number of independent sets when $200 \leq \Delta  \leq k$.  In this paper we study the Glauber dynamics where previously using path coupling Bordewich, Dyer and Karpinski~\cite{bordewich2008path,bordewich2006stopping} showed that the mixing time is $O(n\log n)$ when $\Delta  \leq k-2$ (where throughout $n$ is used to denote the number of vertices).  These bounds, while holding for larger $\Delta$ than the graph case, still fall far short of $5 \cdot 2^{k/2}$, the hardness bound. 
Our main result gives an improved analysis of the Glauber dynamics narrowing the computational gap to a multiplicative constant.  In the case of {\it linear} hypergraphs, those in which no hyper-edges share more than one vertex, much stronger results are possible.

\begin{thm}\label{t:mixing}
There exists an absolute constant $c>0$ such that for every $n$-vertex hypergraph $G$ with edge size $k$ and maximal degree $\Delta$, the Glauber dynamics mixes in $O(n\log n)$ time if the graph satisfies one of the following conditions:
    \begin{enumerate}[1.]
        \item $ \Delta \le c2^{k/2}$.
        \item $ \Delta \le c2^{k}/k^2$ and $G$ is linear.
    \end{enumerate}
\end{thm}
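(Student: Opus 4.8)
The standard approach to proving $O(n\log n)$ mixing for Glauber dynamics is path coupling, but the naive path coupling (Bordewich–Dyer–Karpinski) only reaches $\Delta \le k-2$ because a single disagreement at a vertex $v$ can, through each of the up to $\Delta$ hyperedges containing $v$, create new disagreements, and the "worst case" hyperedge—one that is entirely occupied except for $v$—contributes a large probability of propagation. To get to $\Delta \asymp 2^{k/2}$ one must exploit the fact that such nearly-full hyperedges are rare under the stationary (or near-stationary) measure: a uniformly random independent set fills a given hyperedge to within one vertex with probability exponentially small in $k$. So the plan is to run a *censored* or *burn-in-then-couple* argument: first show that after $O(n\log n)$ steps the chain reaches a "good" region of configuration space where no hyperedge is close to full near the disagreement, and then run path coupling restricted to that good region, where the effective branching number is $\Delta \cdot 2^{-\Omega(k)} < 1$.

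**Key steps, in order**

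First, I would set up the information-percolation / coupling-through-a-witness framework (in the style of Lubetzky–Sly): couple two copies $\bX_t, \bY_t$ of the dynamics from arbitrary starting states using the identity (grand) coupling — same vertex updated, same uniform random variable — and track the set $D_t$ of disagreeing vertices. The goal is to bound $\E|D_t|$ or, better, to show $\dtv{\cdot}$ decays after $O(n\log n)$ steps. Second, I would prove the crucial local lemma: conditioned on the configuration outside a hyperedge $a$, the probability that $a$ has at most one unoccupied vertex is at most $C/2^{k}$ (more precisely one needs this for the measure seen by the dynamics, which requires a short argument that the dynamics does not create an atypical local environment too often — this is where a union bound over the $O(n)$ hyperedges and $O(n\log n)$ time steps, costing a $\log n$ factor, comes in). Third, condition on the event that throughout the run no hyperedge incident to a disagreeing vertex is ever "dangerous" (fewer than two free slots). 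On this event, when vertex $v$ is updated and $v$ is the only disagreement, the update disagrees only if the $\Delta$ hyperedges through $v$ force different outcomes, and a dangerous-free hyperedge forces nothing — so the one-step expected number of new disagreements created is at most $\Delta \cdot O(2^{-k/2})$ after accounting both for the probability a hyperedge becomes dangerous *given* $v$'s two possible values and for the at most $k$ vertices it could infect; choosing $c$ small makes this $<1$, giving the path-coupling contraction. Fourth, assemble: the contraction on the good event plus a crude bound ($|D_t| \le n$ always) on the bad event, whose probability is $\le 1/\mathrm{poly}(n)$ by the union bound in step two, yields $\E|D_{T}| < 1/2$ for $T = O(n\log n)$, hence mixing.

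For the linear case, the improvement to $\Delta \le c 2^k/k^2$ comes from a sharper accounting: in a linear hypergraph two hyperedges through $v$ share only $v$, so the events "hyperedge $a_i$ is dangerous" are close to independent across $i$, and moreover a disagreement propagating out of $a$ into one of its $k-1$ other vertices $w$ then sees, at $w$, only *one* hyperedge (namely $a$) that is already compromised — so the second-generation branching is suppressed by an extra factor. One tracks a weighted disagreement count (weights decaying with "how compromised" a vertex's neighborhood is) and shows the associated branching matrix has spectral radius $<1$ as long as $\Delta k^2 \lesssim 2^k$; the extra $k^2$ is the price of the two generations of $k$-fold local branching that linearity lets us decouple.

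**Main obstacle**

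The hardest part is step two: controlling the law of the local environment *under the dynamics* rather than under stationarity. A uniformly random independent set rarely has a near-full hyperedge, but a priori a badly-chosen start could bias this, and the coupling event in step three conditions on trajectories, which further distorts the conditional law. The clean way around this is presumably to couple not from arbitrary states but from stationarity on one side (so the marginal law of $\bY_t$ is exactly $\pi$ and the local-sparsity estimate is exact), combined with a separate argument — perhaps a preliminary "burn-in" using the crude $\Delta \le k-2$ regime is not available here, so instead one shows directly that the probability that *some* hyperedge is ever dangerous near the (shrinking) disagreement set over $O(n\log n)$ steps is small, using that the disagreement set is small for most of the run and hence touches few hyperedges. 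Making this conditioning-versus-stationarity tension rigorous — essentially a quantitative form of "the dynamics mixes locally faster than globally" — is the technical heart of the paper, and I expect it to require the bulk of the work, including the $\log n$ (or $\log^2 n$) losses that must be absorbed into the $O(n\log n)$ bound.
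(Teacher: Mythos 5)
Your proposal has a genuine gap at its load-bearing step. You propose to condition on the global good event that ``throughout the run no hyperedge incident to a disagreeing vertex is ever dangerous,'' and to bound its failure probability by a union bound over the $O(n)$ hyperedges and $O(n\log n)$ time steps. But the per-event probability is a \emph{constant}: in equilibrium a given hyperedge has $k-1$ ones with probability of order $k2^{-k}$, and $k,\Delta$ do not grow with $n$. Over a run of length $\Theta(n\log n)$ each hyperedge is resampled $\Theta(\log n)$ times, so the expected number of (hyperedge, time) pairs at which a hyperedge is dangerous is of order $nk2^{-k}\log n \gg 1$; the good event has probability tending to $0$, not $1-1/\mathrm{poly}(n)$, and the ``contraction on the good event plus crude bound on the bad event'' assembly collapses. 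Relatedly, your claimed one-step contraction $\Delta\cdot O(2^{-k/2})$ is never derived: the exponent $k/2$ (rather than $k$) is precisely where the difficulty of heavily overlapping hyperedges enters in the general case, since two hyperedges through consecutive disagreements can share $k-1$ vertices, so the second ``dangerous'' event costs almost nothing conditionally on the first; your sketch does not engage with this, and a worst-case path-coupling step cannot avoid it.

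The paper's actual mechanism is different and is exactly what replaces the failed union bound: it does not require dangerous hyperedges to be absent, only that they fail to \emph{chain together} in space-time. One fixes a grand coupling, shows (Section 2.3 and Lemma 3.3 / Lemma 4.6) that a surviving disagreement at time $T$ forces a discrepancy sequence tracing back to time $0$, i.e.\ a connected space-time path of activation events, and then bounds the expected number of such (minimal/redacted) paths of length $\Omega(\log n)$ by a first-moment percolation estimate, which is $n^{-\Omega(1)}$ once $\Delta k^2 2^{-k/2}$ (resp.\ $\Delta k^2 2^{-k}$ in the linear case) is small. The correlation and worst-case-environment issues you flag as the ``main obstacle'' are sidestepped, not by coupling from stationarity, but by comparing against the monotone dominating process $Y_{s,t}$ (the all-ones-started hypercube walk), for which the all-ones probabilities are exact, independence across disjoint space-time blocks is automatic, and overlaps are handled by a two-step (type-II branching) analysis that yields the $2^{k/2}$ threshold. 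Your linear-case intuition (decoupling across hyperedges sharing a single vertex) points in the right direction, but without the space-time path/first-moment framework neither part of the theorem follows from the argument you outline.
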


We expect our approach to hold \bpp when the sizes of hyperedges are \ehl at least $k$ but for the sake of simplicity of the proof we restrict our attention to the case of \bpp constant hyperedge size\ehl.  When the hypergraph is linear we achieve a much stronger bound of $\Delta \le c2^{k}/k^2$, close to the uniqueness threshold of $\bpp \Delta = (1+o(1)) \frac1{2k} 2^k$.  This suggests the  possibility that it may only be the presence of hyperedges with large overlaps that is responsible for the discrepancy with the tree uniqueness threshold. Indeed, in the hardness \bpp construction \ehl of~\cite{Bezakova2015} pairs of hyperedges have order $k$ vertices in common.

Our mixing time proof directly translates into an algorithm for approximately counting independent sets.

\begin{cor}\label{t:FPRAS}
    There is an FPRAS for counting the number of hypergraph independent sets  for all hypergraphs with maximal degree $\Delta $ and edge size $k$ satisfying the conditions of Theorem~\ref{t:mixing}.
\end{cor}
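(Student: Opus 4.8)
The plan is to turn the $O(n\log n)$ mixing bound of Theorem~\ref{t:mixing} into an FPRAS by the standard self-reducibility reduction from approximate counting to approximate sampling (in the style of Jerrum, Valiant, and Vazirani). Fix a hypergraph $G=(V,F)$ of edge size $k$ with $V=\set{v_1,\dots,v_n}$ satisfying one of the conditions of Theorem~\ref{t:mixing}, and write $Z(H)$ for the number of independent sets of a hypergraph $H$. Define a nested sequence $G=G_0\supseteq G_1\supseteq\dots\supseteq G_n$ in which $G_i$ is obtained from $G_{i-1}$ by deleting the vertex $v_i$ together with every hyperedge containing it; then an independent set of $G_{i-1}$ that avoids $v_i$ is exactly an independent set of $G_i$. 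Writing $\mu_{i-1}$ for the uniform measure on independent sets of $G_{i-1}$ and $r_i:=\mu_{i-1}(v_i\notin I)=Z(G_i)/Z(G_{i-1})$, and using $Z(G_n)=1$, we obtain the telescoping identity
\begin{equation}
    Z(G)\;=\;\prod_{i=1}^{n}\frac{Z(G_{i-1})}{Z(G_i)}\;=\;\Big(\prod_{i=1}^{n}r_i\Big)^{-1}.
\end{equation}
Since $I\mapsto I\setminus\set{v_i}$ is an at-most-two-to-one surjection from the independent sets of $G_{i-1}$ onto those of $G_i$, we have $r_i\in[\tfrac12,1]$ for every $i$, so it suffices to estimate each $r_i$ to small relative accuracy.

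The reason the reduction goes through is that every $G_i$ again satisfies the hypotheses of Theorem~\ref{t:mixing}: deleting a vertex and all hyperedges through it leaves all remaining hyperedges of size exactly $k$, cannot increase the maximum degree, and preserves linearity. Hence the Glauber dynamics on the independent sets of $G_{i-1}$, started from $\emptyset$, reaches total variation distance $\varepsilon'$ from $\mu_{i-1}$ within $\tmix(\varepsilon')=O\big(n\log n\cdot\log(1/\varepsilon')\big)$ steps by submultiplicativity of the mixing time, and each step is implementable in $O(k\Delta)$ elementary operations. Given a target accuracy $\varepsilon>0$, for each $i$ we run the chain for $T=O\big(n\log n\cdot\log(n/\varepsilon)\big)$ steps to produce one sample whose law $\nu_{i-1}$ satisfies $\dtv{\nu_{i-1}-\mu_{i-1}}\le\varepsilon/(10n)$, and we repeat to obtain $m$ independent samples $I^{(1)},\dots,I^{(m)}$; set $\hat r_i:=\tfrac1m\sum_{j=1}^m\Ind{v_i\notin I^{(j)}}$. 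Then $\rho_i:=\E\hat r_i=\nu_{i-1}(v_i\notin I)$ obeys $|\rho_i-r_i|\le\varepsilon/(10n)$, and as an average of independent $\set{0,1}$-valued variables $\mathrm{Var}(\hat r_i)\le 1/(4m)$.

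Finally output $\widehat Z:=\big(\prod_{i=1}^n\hat r_i\big)^{-1}$. The $\hat r_i$ are independent and take values in $[0,1]$ with means $\rho_i\ge\tfrac14$, so $\prod_i\rho_i$ differs from $\prod_i r_i=1/Z(G)$ by a multiplicative $(1\pm\varepsilon/3)$ factor, while
\[
    \frac{\mathrm{Var}\big(\prod_i\hat r_i\big)}{\big(\prod_i\rho_i\big)^2}\;=\;\prod_{i=1}^n\Big(1+\frac{\mathrm{Var}(\hat r_i)}{\rho_i^2}\Big)-1\;\le\;\Big(1+\frac{4}{m}\Big)^n-1\;=\;O\!\left(\frac{n}{m}\right).
\]
Taking $m=O(n/\varepsilon^2)$, Chebyshev's inequality gives $\prod_i\hat r_i\in(1\pm\varepsilon)/Z(G)$, hence $\widehat Z\in(1\pm 3\varepsilon)Z(G)$, with probability at least $\tfrac34$; outputting the median of $O(\log(1/\delta))$ independent runs boosts this to $1-\delta$, and rescaling $\varepsilon$ absorbs the constant. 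The total cost is $n\cdot m\cdot T\cdot O(k\Delta)\cdot O(\log(1/\delta))=\mathrm{poly}(n,1/\varepsilon,\log(1/\delta))$, which is exactly an FPRAS. There is no serious obstacle here; the only points needing care — both immediate from Theorem~\ref{t:mixing} — are the invariance of the hypotheses under vertex deletion, so that the mixing bound is available at every stage, and the bookkeeping of the accumulated total-variation and variance errors over the $n$ factors.
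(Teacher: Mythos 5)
Your proposal is correct and follows essentially the same route as the paper: the same telescoping decomposition $1/Z(G)=\prod_{i=1}^{n}\P_{G_{i-1}}(\sigma_i=0)$ over the vertex-deleted subgraphs $G_i$, the same observation that the hypotheses of Theorem~\ref{t:mixing} (and the bound $r_i\ge 1/2$) survive the deletions, and the same estimation of each marginal by samples from the $O(n\log n)$-mixing Glauber dynamics. The only difference is cosmetic bookkeeping of the error — you use a variance/Chebyshev bound on the product plus median boosting, while the paper takes per-factor accuracy $\epsilon/(2n)$ via Azuma--Hoeffding — which does not change the argument in any essential way.
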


Closely around the time a first version of this manuscript was posted on arXiv, two related results were posted by different groups of authors: Moitra~\cite{moitra2016approximate} gave a new FPTAS up to $\Delta  \leq 2^{k/20}$; Heng, Jerrum and Liu~\cite{guo2016uniform} gave an \emph{exact} sampling algorithm that has $O(n)$-average running time when $\Delta \le \frac{1}{\sqrt{6e}k}2^{k/2}$ and the minimum intersection $s\ge O(\log \Delta+\log k)$ between any pair of hyperedges.
Both results were inspired by the recent breakthroughs on algorithmic Lov\'{a}sz Local Lemma \cite{moser2009constructive}, but took significantly different approaches beyond that.
While not giving as sharp results as ours in the case of hypergraph independent sets (i.e.,~monotone CNF formulas), the two algorithms apply to general CNF formulas. 
It also worth noticing that while our algorithm works better when neighbouring hyperedges have small intersections, the algorithm in \cite{guo2016uniform} works best when the intersections are large.
\smallskip

We also consider the case of a random regular hypergraph which is of course locally treelike. Let $\mathcal{H}(n,d,k)$ be the uniform measure over the set of hypergraphs with  $n$ vertices,  degree $d$ and edge size $k$.  In this case we are able to prove fast mixing for $d$ growing as $c 2^{k}/k$ which is the same asymptotic as the uniqueness threshold.
\begin{thm}
\label{t:mixing_regular}
    There exists an absolute constant $c>0$ such that if $H$ is a random hypergraph sampled from $\mathcal{H}(n,d,k)$ with $d\le c2^k/k$, then with high probability (over the choice of $H$), the Glauber dynamics mixes in $O(n\log n)$ time.
\end{thm}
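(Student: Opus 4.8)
The plan is to obtain Theorem~\ref{t:mixing_regular} by running the coupling analysis behind Theorem~\ref{t:mixing} on a hypergraph $H\sim\mathcal{H}(n,d,k)$, using that such an $H$ is with high probability locally a tree, so that the effective rate at which disagreements spread is the one of the $d$-regular $k$-uniform tree rather than the worst-case rate for maximum degree $d$. The first step is to record the standard configuration-model estimates: fixing $r=c'\log n/\log(dk)$ with $c'$ a small constant, with probability $1-o(1)$ every radius-$r$ ball of $H$ spans at most one cycle, only $o(n)$ vertices lie in a ball that is not a tree, and the ``overlap defects'' --- pairs of hyperedges sharing at least two vertices, and vertices on short hyperedge-cycles --- number $o(n)$ (their expectation is $O(d^2k^2)$) and are well separated. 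Thus $H$ is, outside a sparse and shallow defect set, the $d$-regular $k$-uniform tree, and $d\le c2^k/k$ places us inside its uniqueness regime $\Delta\asymp\tfrac1{2k}2^k$.

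Second, I would re-run the disagreement-spread estimate of the proof of Theorem~\ref{t:mixing} on this $H$. A disagreement produced by the coupling at a vertex $v$ can propagate across a hyperedge $a\ni v$ only when $a$ is ``critical'', i.e.\ all vertices of $a$ other than $v$ and the one the disagreement moves to are occupied --- an event of probability $2^{-k+O(1)}$ for the conditioned measure --- and there are at most $k-1$ target vertices in $a$ and at most $d$ hyperedges incident to the next vertex. On a tree this gives an expected offspring of order $dk2^{-k}=O(c)$, so for $c$ small the disagreement process is subcritical, survives $O(\log n)$ coordinates, and --- summed over the $n$ sites --- produces a coupling time, hence a mixing time, of $O(n\log n)$. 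The gain of a factor $k$ over the linear bound $\Delta\le c2^k/k^2$ of Theorem~\ref{t:mixing}(2) comes from the girth being $\Omega(\log n)$: in a general linear hypergraph disagreements can be funnelled back through the same few vertices along short hyperedge-cycles, which inflates the relevant walk sums, whereas here no such short cycles exist.

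The main obstacle, and the place where $d\le c2^k/k$ is genuinely used, is the local-uniformity input underlying the ``$2^{-k+O(1)}$ per critical hyperedge'' estimate. One must show that, even after conditioning on the configuration outside a treelike radius-$r$ ball --- as the coupling forces one to do --- the occupation probability of an interior vertex is $\tfrac12(1+o(1))$ up to the appropriate fugacity correction, and that $k-1$ prescribed vertices of a hyperedge are simultaneously occupied with probability only $2^{-k+O(1)}$ rather than a constant; this is exactly the statement that breaks down once $d$ exceeds the tree uniqueness threshold. Establishing it requires a contraction estimate for the tree recursion of the occupation probability, propagated down the depth-$r$ ball uniformly over all boundary conditions the coupling can generate, together with an argument that the $o(1)$-probability bad structures (balls with a cycle, overlap defects) do not spoil the bound --- e.g.\ via a union bound over the $\le n$ sites or by censoring the dynamics in the rare defective regions and controlling the resulting error. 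Carrying this tree-recursion contraction through the quenched, not-exactly-treelike hypergraph and through arbitrary coupling-generated boundary conditions is the crux; the rest is a rerun of the proof of Theorem~\ref{t:mixing}.
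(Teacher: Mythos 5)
Your plan diverges from the paper's and, as written, has a genuine gap at exactly the step you yourself flag as the crux. The lemma you need --- that after conditioning on the configuration outside a radius-$r$ ball, an interior vertex is occupied with probability $\tfrac12(1+o(1))$ and $k-1$ prescribed vertices of a hyperedge are simultaneously occupied with probability $2^{-k+O(1)}$, \emph{uniformly over all boundary conditions the coupling can generate} --- is not established by your proposal, and in its worst-case-boundary form it is false: as recalled in the introduction (citing Bez\'akov\'a et al.), strong spatial mixing for hypergraph independent sets already fails for $\Delta\ge 6$, for every $k$, because a boundary condition placing $1$'s on $k-1$ vertices of a hyperedge deterministically forces the remaining vertex to $0$, and such hard constraints can be chained arbitrarily deep into the ball. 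So a tree-recursion contraction ``uniform over boundary conditions'' is not available, and restricting to the boundary conditions that the coupling actually produces is precisely the difficulty one would have to solve --- your proposal does not say how. A second, related soft spot: your offspring bound $dk2^{-k}$ treats the configuration seen by a propagating disagreement as (near-)stationary, but the conditional law of the chain given the coupling history is not the stationary measure; this worst-case-versus-typical issue is exactly why plain path coupling stops at $\Delta\le k-2$, and it cannot be waved away by invoking equilibrium marginals.

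The paper's proof of Theorem~\ref{t:mixing_regular} avoids the stationary measure and tree recursions entirely. The $2^{-k}$ factors come from the monotone dominating process $Y_{s,t}$ (once every vertex of a hyperedge has refreshed, its restriction is i.i.d.\ Bernoulli$(1/2)$), fed into the space-time minimal/projected-path percolation of Section~\ref{s:generalG}. Randomness of the hypergraph enters only through a \emph{constant-radius} property: with high probability every ball of radius $R_\star=\lceil e\Delta k^2\rceil+1$ contains at most one cycle (Proposition~\ref{p:Rgood}), so hyperedge overlaps are at most $2$ and any ``cycle step'' of a discrepancy path must traverse a cycle of length at least $2R_\star$, forcing that many sequential updates within time $O(k)$, which costs $\sum_{m\ge 2R_\star}(\Delta k)^m\P(\mathrm{Pois}(k)\ge m-2)\le 2e^{-k}$ (Lemma~\ref{l:Ecyclebranch}, Proposition~\ref{p:Rstar}). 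The two-step recursion (Theorem~\ref{t:Etwosteps2}) then gives an expected branching of order $\Delta k2^{-k}+k^42^{-k}$, which is where $d\le c2^k/k$ is used; the improvement over the linear case comes from this constant-radius tree-likeness plus the refined accounting of same-hyperedge steps, not from $\Omega(\log n)$ girth or from uniqueness/decay of correlations of the Gibbs measure. To repair your write-up you would either have to supply the missing quenched spatial-mixing argument restricted to dynamically generated boundary conditions (not known, and not in the paper), or switch to the paper's route.
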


The only property of random regular hypergraphs used in the proof of Theorem \ref{t:mixing_regular} is that  there exists some constant $N \ge 1$ such that each ball of radius $R= R(\Delta,k) \equiv \lceil 3 e \Delta k^2 \rceil $ contains  at most $N$ cycles.
Indeed, if $H \sim \mathcal{H}(n,d,k)  $ then this holds for  $N=1$ with high probability for every fixed radius (cf. \cite[Lemma 2.1]{lubetzky2010cutoff}).
\subsection{Proof Outline}

As noted above two key methods for approximate counting, tree approximations and path coupling break down far from the computational threshold.  Tree approximations rely on a strong notion of decay of correlation, strong spatial mixing, which as noted above breaks down even for constant sized $\Delta$ and the work Bez\'{a}kov\'{a} et al.~\cite{Bezakova2015} to extend to $\Delta $ growing linearly in $k$ required a very detailed analysis.  Similarly, for the Glauber dynamics, path coupling also breaks down for linear sized $\Delta$~\cite{bordewich2006stopping}.

It is useful to consider the reasons for the limitations of path coupling.  Disagreements can only be propagated when there is a hyperedge with $k-1$ ones. However, such hyperedges should be very rare. Indeed, we show that in equilibrium the probability that a certain hyperedge $a$ has $k-1$ ones is at most $(k+1) 2^{-k}$. Thus when $\Delta$ is small, most vertices will be far from all such hyperedges which morally should give a contraction in path coupling.  However, in the standard approach of path coupling we must make a worst case assumption of the neighbourhood of a disagreement.

Our approach is to consider the geometric structure of bad regions in space time $V \times \R_+$.  In Section~\ref{s:weakver} we give a simpler version of the proof which loses only a polynomial factor in the bound on $\Delta$, yet highlights the key ideas that will be used later.  We bound the bad space time regions via a percolation argument  showing that if coupling fails, then some disagreement at time 0 must propagate to the present time, which corresponds to a vertical crossing. This geometric approach is similar to the approach of Information Percolation used to prove cutoff for the Ising model~\cite{lubetzky2015information} and avoids the need to assume worst case neighbourhoods.

In Section~\ref{s:weakver} we control the propagation of disagreements by discretizing the time-line into blocks of length $k$ and considering some (fairly coarse) necessary conditions for the creation of new disagreements during the span of an entire block of time.  In particular, in order for a new disagreement to be created at $a \in F$ during a given time interval $I$, there must be some $t \in I$ at which the configuration on \bpp the vertices of \ehl $a$ has $k-1$ ones.   This allows us to exploit the independence between different time blocks.    The proof for the sharp result is given in Section~\ref{s:generalG} and Section~\ref{s:regularG} where a more refined analysis is carried out. The main additional tool is to find an efficient  scheme for controlling  the propagation of disagreements via an auxiliary continuous time process so that we again can exploit independence,  as well as certain positive correlations associated with that process.

Finally, in Section \ref{s:FPRAS} we present the proof of Corollary~\ref{t:FPRAS} via a standard reduction from sampling to counting.

\section{Preliminaries}
\subsection{Definition of model}
In what follows it will be convenient to treat vertices and hyperedges in a uniform manner, for which reason  we consider the bipartite graph representation $G=(V,F,E)$ of a hypergraph $H = (V,F)$, where $V$ is the set of vertices, $F$ is the set of hyperedges, and $E=\{ (v,a): v \in a \in F  \} $ (i.e.,~we connect vertex $v\in V$ to hyperedge $a\in F$ if and only if $v$ appears in hyperedge $a$).
\bpp Let $n\equiv |V|$ denote the number of vertices in $G$. \ehl
For each $v\in V$, we will denote by $\pd v$ the neighbours of $v$ in $G$, which is a subset of $F$ and for each $a\in F$ define $\pd a$ similarly.
Under this notation, the degree of a vertex $v$ equals $|\pd v|$ while the size of a hyperedge $a$ equals $|\pd a|$.

An \emph{independent set} of hypergraph $G$ can be encoded as a configuration $\usi \in \set{0,1}^V$ satisfying that for every $a\in F$, there exists $v\in \pd a$ such that $\sigma(v) \neq 1$. We denote by $\Omega\equiv \Omega(G)\subset 2^{V}$ the set of all such configurations and consider the uniform measure over $\Omega$ given by
\[
    \pi(\usi) = \frac{1}{Z(G)}\Ind{\textup{$\usi$ is an independent set of $G$}},
\]
where the \bpp normalizing constant $Z\equiv Z(G) \equiv |\Omega(G)|$ counts the number of hypergraph independent sets on $G$.\ehl 
\bigskip

The (discrete-time) \emph{Glauber dynamics} on the set of independent sets is the Markov chain $(W_t)_{t\ge 0}$ with state space $\Omega$ defined as following: \bpp For each configuration $\usi\in\Omega$, vertex $v\in V$ and binary variable $x\in\set{0,1}$, let $\usi^{v,x}$ be the configuration that equals $x$ at vertex $v$ and agrees with $\usi$ elsewhere. 
Suppose that the Markov chain is at state $W_t=\usi$ at time $t$. The state at time $t+1$ is then determined by uniformly selecting a vertex $v\in V$ and performing the following update procedure:\ehl
\begin{enumerate}[\quad 1.]
    \item With probability $1/2$, set $W_{t+1} = \bpp \usi^{v,0}$.
\item With \bpp the rest \ehl probability $1/2$,  set $W_{t+1} =\bpp  \usi^{v,1}$ if $\bpp \usi^{v,1}\in\Omega$ and $W_{t+1} =\usi$ otherwise.
(If the latter case happens then $W_{t+1}(v) =\sigma(v) = 0$.)
\end{enumerate}
This Markov chain is easily shown to be ergodic with stationary distribution $\pi$. Its (total variation) mixing time, denoted by $\tmix$, is defined to be
\[
    \tmix(\epsilon) \equiv
    \inf\set{t:\max_{\usi\in\Omega}
    \dtv{\P(W_t=\cdot \mid W_0=\usi) - \pi(\cdot)} < \epsilon}
    ,\quad
    \tmix \equiv \tmix(1/4),
\]
where $\|\mu-\nu \|_{\mathrm{TV}}\equiv\frac{1}{2} \sum_{\usi\in\Omega}|\mu(\usi)-\nu(\usi)| $. In what follows it is convenient to consider the continuous-time Glauber dynamics $X_t$ defined as follows. Place at each site $v\in V$ an i.i.d.\ rate-one Poisson clock; at each clock ring, we update the associated vertex in the same manner as in the discrete-time chain. The mixing time of the continuous chain $X_t$ can be defined similarly and we denote it by $\tmix^\textup{ct}$. It is well-known (cf.\ \cite[Thm.~20.3]{levin2009markov}) that the two mixing times $\tmix,\tmix^\textup{ct}$ satisfy the following relation.
\begin{ppn}\label{p:tmixtmix}
    Under the notation above, $\tmix(\epsilon) \le 4|V| \tmix^\textup{ct}(\epsilon/2)$.
\end{ppn}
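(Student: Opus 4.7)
The plan is to compare the discrete chain at step $k = 4nT$ with a stationary sample, where $n = |V|$ and $T := \tmix^\textup{cont}(\epsilon/2)$; the goal is to show $\dtv{\P(W_k \in \cdot \mid W_0 = \usi) - \pi} \le \epsilon$ for every $\usi$, which is equivalent to $\tmix(\epsilon) \le k$. The bridge between the two chains is the standard Poissonization: let the discrete chain $W$ be driven by an i.i.d.\ sequence of updates $U_1, U_2, \ldots$ (each a uniformly chosen (vertex, spin) pair), and let $(N_t)_{t \ge 0}$ be an \emph{independent} rate-$n$ Poisson process. Then $X_t := W_{N_t}$ is a version of the continuous chain started at $\usi$, so $W_{N_T}$ has the same law as $X_T$ and therefore lies within total variation $\epsilon/2$ of $\pi$.

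I would construct a coupling of $W_k$ with a stationary sample $S \sim \pi$ as follows. Pair $W_{N_T}$ with an auxiliary $V_0 \sim \pi$ via the maximal TV coupling, so that $\P(V_0 = W_{N_T}) \ge 1 - \epsilon/2$; then propagate $V_0$ forward using the same updates $U_{N_T + 1}, U_{N_T + 2}, \ldots$ that drive $W$ beyond step $N_T$, producing a chain $V_0, V_1, V_2, \ldots$. On $\{V_0 = W_{N_T}\}$, the shared updates force $V_j = W_{N_T + j}$ for all $j \ge 0$ by induction, and since $\pi$ is invariant under each Glauber step, $V_j \sim \pi$ for every $j$. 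Define $S := V_{k - N_T}$ on $\{N_T \le k\}$ and $S \sim \pi$ independently otherwise; then $S \sim \pi$ marginally, and $\{W_k = S\}$ holds on $\{V_0 = W_{N_T}\} \cap \{N_T \le k\}$. Hence
\[
\dtv{\P(W_k \in \cdot \mid W_0 = \usi) - \pi} \le \P(W_k \neq S) \le \P(V_0 \neq W_{N_T}) + \P(N_T > 4nT) \le \tfrac{\epsilon}{2} + \P(N_T > 4nT).
\]

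It remains to bound the Poisson tail by $\epsilon/2$. A Chernoff estimate yields $\P(N_T > 4nT) \le \exp(-(4\ln 4 - 3)\,nT)$, which is at most $\epsilon/2$ as soon as $nT$ exceeds a constant depending only on $\epsilon$. This lower bound on $nT$ comes for free from the hypothesis $T = \tmix^\textup{cont}(\epsilon/2)$: since $\P(X_T = \usi \mid X_0 = \usi) \ge \P(N_T = 0) = e^{-nT}$ while $\pi(\usi) \le 1/|\Omega|$ is typically very small, the continuous chain cannot be within $\epsilon/2$ of $\pi$ in total variation unless $nT$ is bounded below by an absolute constant. The argument poses no real obstacle beyond this routine bookkeeping with the prefactor $4$; any sufficiently large constant in place of $4$ works, and $4$ suffices once the Chernoff bound is weighed against the trivial lower bound on $nT$ just noted.
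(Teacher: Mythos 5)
The decisive step in your construction is the claim that $S := V_{k-N_T}$ (on $\{N_T\le k\}$) is exactly $\pi$-distributed, and this is where the argument breaks. You correctly observe that each $V_j\sim\pi$ for every \emph{deterministic} $j$, but $S$ evaluates the sequence $(V_j)$ at the random index $k-N_T$, and this index is correlated with the sequence: under the maximal coupling, $V_0$ is forced to agree with $W_{N_T}$ with high probability, and the law of $W_{N_T}$ \emph{given} $N_T=m$ is $\delta_{\usi}P^m$, not the Poisson mixture. Consequently the conditional law of $V_0$ given $N_T=m$ is not $\pi$ (the maximal coupling only fixes the unconditional marginal), and since the number of forward steps $k-m$ varies with $m$, averaging over $m$ does not restore stationarity. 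A two-state example shows the failure is fatal rather than technical: take the deterministic flip chain on $\{0,1\}$ ($P(0,1)=P(1,0)=1$). Its continuized version has $\dtv{H_t(x,\cdot)-\pi}=\tfrac12 e^{-2t}$, so $\tmix^\textup{cont}(\epsilon/2)$ is a small constant, while the discrete chain never mixes. Every ingredient you use — Poissonization $X_t=W_{N_t}$, invariance of $\pi$ under one step, the maximal coupling of $W_{N_T}$ with $\pi$, propagation by shared updates, the Chernoff bound — is available verbatim for this chain, yet there $V_{k-N_T}=V_0\oplus(k-N_T)\approx W_0\oplus k$ is (nearly) deterministic, i.e.\ very far from $\pi$. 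So your scheme, which nowhere uses any property of the Glauber kernel beyond $\pi$-invariance, cannot yield the proposition; the missing ingredient is precisely the holding probability of this chain.

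The paper does not prove the proposition; it invokes the standard continuous-versus-lazy-discrete comparison (\cite[Thm.~20.3]{levin2009markov}). The standard route makes essential, quantitative use of laziness: since every diagonal entry of the discrete kernel is at least $1/2$, one writes $P=\tfrac12(I+Q)$ with $Q$ stochastic and $\pi Q=\pi$, so that $P^k(x,\cdot)=\E\bigl[Q^{B}(x,\cdot)\bigr]$ with $B\sim\mathrm{Bin}(k,1/2)$ while the continuous chain satisfies $H_T(x,\cdot)=\E\bigl[Q^{M}(x,\cdot)\bigr]$ with $M\sim\mathrm{Pois}(|V|T/2)$; the deterministic-time discrete distance is then controlled by the continuous one by comparing these \emph{randomized step counts} (the slack $k=4|V|T$ and the degradation $\epsilon\mapsto\epsilon/2$ absorb the discrepancy between the binomial and Poisson clocks). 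If you want to salvage a coupling-style proof, you must build the comparison at the level of these $Q$-step counts (or otherwise exploit the holding probability), not by re-indexing a stationary trajectory at the $N_T$-dependent time $k-N_T$. As a secondary point, your closing bookkeeping also needs care: the lower bound on $|V|T$ you extract from $\P(N_T=0)=e^{-|V|T}$ is only of order $\log(1/\epsilon)\wedge\log|\Omega|$, which happens to suffice against an exponentially small error term but would not cover error terms decaying only polynomially in $|V|T$, and it needs the cases of very small $\epsilon$ spelled out.
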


\subsection{Update sequence and grand coupling}

The \emph{update sequence} along an interval ${(}t_0,t_1]$ is the set of tuples of the form $(v,t,U)$, where $v\in V$ is the vertex to be updated, $t\in {(}t_0,t_1]$ is the update time, and $U\in \set{0,1}$ is the tentative update value of $v$ (``tentative" as it might be an illegal update). An update $(v,t,U)$ is said to be \emph{blocked} (by hyperedge $a$) in configuration $\usi$, if $U = 1$ and there exists $a\in \pd v$ such that $\sigma(u) = 1$ for all $u \in \pd a\setminus\set{v}$.

Under this notation, the update rule of the Glauber dynamics can be rephrased as updating the spin at $v$ to $U$ at time $t$ unless the update is blocked in $X_t$. Therefore $X^\usi_t$, the continuous-time chain starting from initial configuration $\usi$, can be expressed as a deterministic function of $\usi$ and the update sequence $\xi$. We will denote this function by $\bX$ and write
\[
    X^{\usi}_{t} = \bX[\xi, \usi;0,t].
\]
We remark that $\bX$ depends on the underlying graph $G$ implicitly.

A related update function $\bY$ is given by setting the spin at $v$ to $U$ at each update $(v,t,U)$  \emph{regardless of} whether the update is blocked or not. We define a family of processes $Y_{s,t}$ on the state space of $2^V$ which, given the all-one initial configuration $\vec{1}$ and the update sequence $\xi_{s,t}$ along the interval ${(}s,t]$, satisfies
\[
    Y_{s,t}  \equiv \vec{1},
    \textup{ if  } t\le s,
    \quad \textup{and }
    Y_{s,t} \equiv \bY[\xi_{s,t}, \vec{1};s,t],
    \textup{ if  } t > s.
\]

In the continuous-time setting, the update sequence $\xi\equiv\seq{(v_\ell,t_\ell,U_\ell)}_{\ell\ge 1}$ follows a marked poisson process  where  $\xi^\circ \equiv \seq{(v_\ell,t_\ell)}_{\ell\ge 1}$ is a poisson point process on $V\times{[}0,\infty)$ with rate $1$ (per site) and $\seq{U_\ell}_{\ell\ge 1}$ is a sequence of i.i.d.\ Bernoulli($1/2$) random variables independent of $\seq{(v_\ell,t_\ell)}_{\ell\ge 1}$.
\bpp Using the same marked Poisson process $\xi$ for different update functions\ehl,  the discussion above provides a \emph{grand coupling} of the processes $(X^{\usi}_t)_{t \ge 0}$ and $(Y_{s,t})_{t \ge 0}$ \bpp for all possible values of $s,t, \usi$ simultaneously. \ehl 

{\bpp It is straightforward to check that, under the above setting, for any fixed $s\ge 0$ the process $(Z^{(s)}_t)_{t\ge 0}$, where $Z_t^{(s)}\equiv Y_{s,s+t} $, is a continuous-time simple random walk on the hypercube $\set{0,1}^V$ with initial state $Y_{s,s} = \vec 1$, in which each co-ordinate is updated at rate 1.}  

The purpose of introducing $Y_{s,t}$ is to utilize the monotonicity in the constraints of independent set and provide a uniform upper bound to $X^\usi_t$ for all $\usi\in \Omega$.
\bpp For simplicity of notation, we write $Y_t\equiv Y_{0,t}$.  \ehl
For any pair of vectors $X,Y$ in $\set{0,1}^V$, write $X\le Y$ if and only if $X(v)\le Y(v)$ for all $v\in V$.
\begin{ppn}\label{p:monotonicity}
    Under the notations above, for all $\usi\in\Omega(G)$ and $s,t\ge 0$, we have
\begin{equation}\label{e:monotonicity}
    X^\usi_t\le Y_{t} \le Y_{s,t}.
\end{equation}
\end{ppn}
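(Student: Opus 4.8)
The plan is to establish the two inequalities in \eqref{e:monotonicity} separately, both by elementary monotonicity arguments that exploit the fact that the grand coupling drives all of the processes $X^\usi_t$ and $Y_{s,t}$ with the \emph{same} update sequence $\xi$.

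For the right-hand inequality $Y_{0,t}\le Y_{s,t}$, I would argue coordinatewise. Fix $v\in V$; the case $t\le s$ is trivial since then $Y_{s,t}=\vec{1}\ge Y_{0,t}$, so assume $t>s\ge 0$. By the definition of $\bY$, the value $Y_{s,t}(v)$ equals the tentative value $U_\ell$ of the last update $(v,t_\ell,U_\ell)\in\xi$ with $t_\ell\in(s,t]$, or equals $1$ if there is no such update; the analogous statement holds for $Y_{0,t}(v)$ with the interval $(0,t]$. If $v$ is updated during $(s,t]\subseteq(0,t]$, then the last update of $v$ in $(s,t]$ is also its last update in $(0,t]$, so $Y_{0,t}(v)=Y_{s,t}(v)$. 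Otherwise $Y_{s,t}(v)=1\ge Y_{0,t}(v)$. Hence $Y_{0,t}\le Y_{s,t}$.

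For the left-hand inequality $X^\usi_t\le Y_{0,t}$, I would fix $t\ge 0$ and induct on the (almost surely finitely many) update times of $\xi$ in $(0,t]$, listed as $t_0:=0<t_1<\cdots<t_m\le t$. The induction hypothesis is that $X^\usi_{t_j}\le Y_{0,t_j}$ coordinatewise; the base case is $X^\usi_0=\usi\le\vec{1}=Y_{0,0}$, and since both processes are constant on each interval $(t_{j-1},t_j)$ it suffices to check that the inequality is preserved across a single update $(v,t_j,U)$. Coordinates $w\neq v$ are left untouched by this update in both processes, so the hypothesis carries over for those. For the coordinate $v$, the free process sets $Y_{0,t_j}(v)=U$, while the Glauber process sets $X^\usi_{t_j}(v)=U$ if the update is not blocked in $X^\usi_{t_j^-}$, and otherwise — which forces $U=1$ — sets $X^\usi_{t_j}(v)=0$. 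In either case $X^\usi_{t_j}(v)\le U=Y_{0,t_j}(v)$, completing the induction; combining with the first inequality yields \eqref{e:monotonicity}.

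The argument is essentially bookkeeping, and I do not anticipate a genuine obstacle. The one point requiring care is the blocked case above: here one must invoke that $X^\usi_{t_j^-}$ is a bona fide independent set, so that if the update at $v$ is blocked by some $a\in\pd v$ (meaning all of $\pd a\setminus\{v\}$ carry spin $1$), then $v$ must already carry spin $0$ — otherwise $a\subseteq X^\usi_{t_j^-}$ would be violated — and hence the illegal update leaves $X^\usi_{t_j}(v)=0\le 1=U$ and does not break the domination.
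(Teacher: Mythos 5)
Your proposal is correct and follows essentially the same route as the paper: an elementary induction over the update times under the grand coupling, with the key observation that a blocked update forces $U=1$ and hence leaves $X^\usi(v)\le 1=Y(v)$. The only cosmetic difference is that you dispose of $Y_{0,t}\le Y_{s,t}$ by a direct last-update argument instead of folding it into the same induction, which is equally valid.
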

\begin{proof}
    \bpp We proceed by showing that \eqref{e:monotonicity} holds for each configuration $\sigma\in \Omega(G)$ and update sequence $\xi=\seq{(v_\ell,t_\ell,U_\ell)}_{\ell\ge 1}$.
By the right continuity of the process, it is enough to verify \eqref{e:monotonicity} at time $0$ and the times of updates $(t_\ell)_{\ell\ge 1}$.  When referring to the second inequality, we may assume in addition that $s\le t$, as otherwise  $Y_{s,t} = \vec{1}$ and thus there is nothing to prove. 

At time $t_0 \equiv 0$, \eqref{e:monotonicity} holds  since $\usi\le \vec{1}$ for all $\usi\in\Omega(G)$.
Suppose by induction we have verified \eqref{e:monotonicity} at all update times $(t_{\ell'})_{\ell'\le \ell-1}$. Since there is no update between $t_{\ell-1}$ and $t_{\ell}$, \eqref{e:monotonicity} remains true till the moment immediately before $t_\ell$. At time $t=t_\ell$, the inequality is preserved if we successfully update $v_\ell$ to $U_\ell$ in each of the configurations ${X^\usi_{t}, Y_{t}, Y_{s,{t}}}$. If the update fails in some of the configurations, then $(v_\ell,t_\ell,U_\ell)$ must be blocked in $\lim_{\epsilon\downarrow 0} X^\usi_{t-\epsilon}$. In which case $U_\ell = 1$ and we set $X^\usi_{t}(v)$ to be $0$, while $\bpp Y_{t}(v)$ and $Y_{s,t}(v)$ to be $1$, again preserving the inequality. Combining the two cases together complete the induction hypothesis the $\ell$'th update.
\end{proof}

Let $\tcoup \equiv \tcoup(\xi)$ be the time the grand coupling succeeds under updating sequence $\xi$:
\[
    \tcoup \equiv
    \min\set{t: \forall \usi,\uta \in\Omega
        , X^{\usi}_t = X^{\uta}_t}.
\]
A standard argument (cf.~\cite[Thm.~5.2]{levin2009markov}) implies that for all $t>0$
\begin{equation}\label{e:dtv_coupling}
    \max_{\usi\in\Omega} \dtv{P^t(\usi,\cdot) - \pi}
    \le
    \P(\tcoup>t),
\end{equation}
where $P^t(\usi , \cdot)$ is the distribution at time $t$ of the continuous-time chain, started from  $\usi$.
\subsection{Discrepancy sequence and activation time}\label{s:discrep}

In this section we take a closer look at the update process backward in time and in particular show how a discrepancy at time $t$ can be traced back to discrepancies at earlier times. This will provide a \bpp necessary \ehl condition for $\set{\tcoup>t}$.

Given an update sequence $\xi$, a vertex $v\in V$ and $t>0$, let $t_{-}(v,t) \equiv t_{-}(v,t;\xi)$ be the time of the last update in $\xi$ at $v$ before time $t$. More explicitly, we define
\[
    t_{-}(v,t) \equiv 0\vee
    \sup \set{t'<t: (v,t',0) \in \xi
    \textup{ or }(v,t',1) \in \xi}.
\]
Fix an update sequence $\xi$ and time $t>0$. If $\tcoup = \tcoup(\xi)>t$, then there must exist two initial configurations $\usi,\uta\in\Omega$ and a ``discrepancy" $v_0$ at time $t$, i.e.,\ a vertex $v_0\in V$ such that $X^\usi_t(v_0) \neq X^\uta_t(v_0)$. Now we choose an \bpp arbitrary \ehl such discrepancy $v_0$, look at the last update of $v_0$ before time $t_1 \equiv t$ and denote its time by $t_{0}\equiv t_{-}(v_0,t_1)$.

Assume without loss of generality that  $X^\usi_{t_{1}}(v_0)=0 \neq 1= X^\uta_{t_{1}}(v_0)$. To end up with a discrepancy at $v_0$ after the update at $t_0$, \bpp its tentative update value must be $1$ \ehl and it must be blocked in $X^\usi_{t_0} $ but not in $X^\uta_{t_0}$. Hence there must exists a hyperedge $a_{0} \in \pd v_0$  such that
\[
    X^\usi_{t_0}(\pd a_0\setminus\set{v_0}) = \vec{1} \neq X^\uta_{t_0}(\pd a_0\setminus\set{v_0}).
\]
Consequently, there must exist at least one vertex $u\in\pd a_{0}\setminus \set{v_0}$  at which the two configurations disagree at time $t_0$, namely,
\[X^\usi_{t_0}(u) = 1 \neq 0 = X^\uta_{t_0}(u).\]
We arbitrarily choose one such vertex $u \in\pd a_{0}\setminus \set{v_0}$ and denote it by $v_{-1}$.

Now apply the same reasoning for the update at $v_{-1}$ at time $t_{-1} \equiv t_{-} (v_{-1},t_{0})$. We can find a hyperedge $a_{-1}\in\pd v_{-1}$ blocking the update $(v_{-1},t_{-1},1) \in \xi $ in exactly one of the two configurations $X^\usi_{t_{-1}}$ and $X^\uta_{t_{-1}}$ (namely, in the latter). Moreover, there must exists a discrepancy at a certain vertex $v_{-2}\in\pd a_{-1}$ at time $t_{-1}$.
Repeating the process until time $0$ produces a sequence of tuples $\zeta^\circ \equiv \seq{(v_{-\ell},t_{-\ell},a_{-\ell})}_{0 \le \ell \le L}$, where $(t_\ell)_{-L \le \ell \le 0}$ satisfies that
\[
    \bpp
    t_1 = t,\quad t_{-L} = 0\quad
    \textup{ and for all }  0 \le \ell < L, \quad t_{-\ell} \equiv t_{-}(v_{-\ell}, t_{-(\ell-1)})\le t_{-(\ell-1)};
\]
$(v_{-L},a_{-L}) $ satisfies that $a_{-L}=a_{-L+1}$ and 
\begin{equation}
\label{e: minmax}\bpp
X^\usi_{t_{-L} }(v_L) = X^\usi_{0}(v_L)\neq  X^\uta_{0}(v_L) =X^\uta_{t_{-L}}(v_L);
\end{equation}
and for each $0 \le \ell < L$:
\begin{enumerate} [\quad 1.]
    \item The update $(v_{-\ell},t_{-\ell},1)$ exists in $\xi$.
    \item The hyperedge $a_{-\ell}$ contains $v_{-\ell}$ and $v_{-(\ell+1)}$  and the update $(v_{-\ell},t_{-\ell},1)$ is blocked by $a_{-\ell}$ in exactly one of the two configurations $X^\usi_{t_{-\ell}}$ and $X^\uta_{t_{-\ell}}$.     \item The vertex $v_{-\ell}$ is not updated in the time interval $(t_{-\ell}, t_{-(\ell-1)}]$.
\end{enumerate}

\bpp
Condition 2 in the above description is hard to analyse directly, because  in general it is hard to control the probability that an update is blocked for the process $X^{\usi}_{t}$ at time $t$. This is where we use monotonicity (\eqref{e:monotonicity}). Observe that whenever an update $(v,t,1)$ is blocked by a hyper-edge $a$ in one of the two processes $X^\usi_t,X^\uta_t$ at time $t$, one of them must be all $1$ on  $\pd a\setminus v$, and by monotonicity so is $Y_t(\pd a\setminus v)$. Namely
\[
    \vec 1\ge Y_{t}(\pd a\setminus v) \ge \max
    \set{
        X^\usi_t (\pd a \setminus v),
        X^\uta_t (\pd a \setminus v)
    }
    =\vec 1
    .
\]
Meanwhile, since we are trying to update the value at $v$ to $1$ at time $t$, after this update (which is always successful in $Y_t$), $Y_t(v)$ equals to 1 as well. Therefore, the sequence $\xi^\circ$ satisfies that 
\begin{equation}\label{e:bad_al} \bpp
    Y_{t_{-\ell}}(\pd a_{-\ell}) = \vec 1
    , \textup{ for all }
    0\le \ell<L
    .
\end{equation}
Equation \eqref{e:bad_al} is the key property of our proof and will be used repeatedly in what comes.\ehl

 For convenience of \bpp later \ehl application, it is useful to consider also the following representation of $\zeta^\circ$ with non-negative indices, which moves forward in time rather than backwards as in the original construction of $\zeta^\circ$: \bpp Let $\zeta \equiv ((v'_{\ell},t'_{\ell},a'_{\ell}))_{0\le \ell\le L}, $ be defined as 
\[
    (v'_{\ell},t'_{\ell},a'_{\ell})\equiv (v_{\ell-L},t_{\ell-L},a_{\ell-L})
    \quad\textup{ for each } 0 \le \ell \le L,
\]
and write $t'_{L+1} \equiv t_1$ for the endpoint of the time interval. \ehl 
We will refer to such a sequence $\zeta$ as a \emph{discrepancy sequence up to time} $t_1=t$ (with respect to $\usi,\uta$ and $\xi$). It is straightforward to check that
\begin{lem}\label{l:discrep}
    Given an update sequence $\xi$ and a time $t\ge 0$, if $\set{\tcoup>t}$, then there exists a discrepancy sequence $\zeta$ up to time $t$ as defined above.
\end{lem}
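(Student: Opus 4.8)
The plan is to carry out, and verify the validity of, the backward tracing procedure described in the paragraphs immediately preceding the lemma; this is essentially a bookkeeping argument. Suppose $\tcoup(\xi)>t$, and restrict to the almost sure event that $\xi$ has finitely many marks in $V\times(0,t]$. By the definition of $\tcoup$ there are $\usi,\uta\in\Omega$ and a vertex $v_0$ with $X^\usi_t(v_0)\ne X^\uta_t(v_0)$; after swapping $\usi,\uta$ if necessary we may assume $X^\usi_t(v_0)=0\ne 1=X^\uta_t(v_0)$. With $t_1=t$ and $v_0$ as the first vertex, we generate the tuples $(v_{-\ell},t_{-\ell},a_{-\ell})$ recursively as in the text: given a discrepancy at $v_{-\ell}$ at time $t_{-(\ell-1)}$ (with $t_{-(\ell-1)}:=t_1$ when $\ell=0$), set $t_{-\ell}=t_{-}(v_{-\ell},t_{-(\ell-1)})$, and if $t_{-\ell}>0$ extract a blocking hyperedge $a_{-\ell}\ni v_{-\ell}$ together with a new discrepant vertex $v_{-(\ell+1)}\in\pd a_{-\ell}\setminus\set{v_{-\ell}}$ at time $t_{-\ell}$; if $t_{-\ell}=0$ we stop, putting $L=\ell$, $t_{-L}=0$, $a_{-L}=a_{-(L-1)}$.

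The first thing to check is that each recursive step is legitimate. Because $v_{-\ell}$ is, by the definition of $t_{-}$, not updated on $(t_{-\ell},t_{-(\ell-1)}]$, the $v_{-\ell}$-coordinate of each of $X^\usi,X^\uta$ is constant there, so the discrepancy at $v_{-\ell}$ is already present immediately after the update at $t_{-\ell}$. Since $\xi$ is common to the two runs, this forces that update to be of the form $(v_{-\ell},t_{-\ell},1)$ and to be blocked in exactly one of $X^\usi_{t_{-\ell}}, X^\uta_{t_{-\ell}}$; a blocking hyperedge $a_{-\ell}\in\pd v_{-\ell}$ then has all ones on $\pd a_{-\ell}\setminus\set{v_{-\ell}}$ in that configuration but not in the other, which produces $v_{-(\ell+1)}$ and establishes conditions 1--3 as well as \eqref{e: minmax} with $L$ replaced by $\ell$; the bound \eqref{e:bad_al} then follows from Proposition~\ref{p:monotonicity}, since $Y_{0,t_{-\ell}}$ dominates $X^\usi_{t_{-\ell}}\vee X^\uta_{t_{-\ell}}$ and hence equals $1$ on $\pd a_{-\ell}\setminus\set{v_{-\ell}}$, while $Y_{0,t_{-\ell}}(v_{-\ell})=1$ because the $\bY$-update always sets the spin to $1$. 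The second thing to check is termination: the times strictly decrease, and until the recursion stops each $t_{-\ell}$ is an honest update time of $v_{-\ell}$ lying in $(0,t]$, so by finiteness of the marks in $V\times(0,t]$ the recursion reaches $t_{-}=0$ after finitely many steps; at that last step the discrepancy at $v_{-L}$ persists back to time $0$, so $\usi(v_{-L})\ne\uta(v_{-L})$, which is exactly \eqref{e: minmax} at index $L$.

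Re-indexing $\zeta^\circ=((v_{-\ell},t_{-\ell},a_{-\ell}))_{0\le\ell\le L}$ via $\ell\mapsto\ell-L$ so that time runs forward, and recording $t'_{L+1}:=t_1=t$ as the right endpoint, produces a discrepancy sequence $\zeta$ up to time $t$, as required. I do not expect a genuine obstacle here: the argument is routine, and the only points that call for a little care are the observation that a discrepancy at a vertex persists between its consecutive updates (which is what pins down the update at $t_{-\ell}$ as the blocked, type-$1$ one) and the appeal to monotonicity that lets \eqref{e:bad_al} be phrased in terms of the auxiliary process $Y$ rather than $X^\usi$ or $X^\uta$.
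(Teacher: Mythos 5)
Your proof is correct and follows essentially the same route as the paper, whose proof of Lemma~\ref{l:discrep} is exactly the backward-tracing construction of Section~\ref{s:discrep} that you carry out (discrepancy persists between consecutive updates, the last update must be a type-$1$ update blocked in exactly one configuration, monotonicity via Proposition~\ref{p:monotonicity} gives \eqref{e:bad_al}, and local finiteness of $\xi$ gives termination). Your treatment is, if anything, slightly more explicit than the paper's on the persistence and termination points.
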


We end the section with one more definition.
 \begin{dfn}\label{d:active}
     Let $t\ge 0$, $a\in F$ and  $v \in \pd a$. We say that $(v,a)$ is \emph{activated} (resp.~$s$-\emph{activated}) at time $t$ if the update sequence $\xi$ contains an update $(v,t,1)$ and $Y_{t}(\pd a)= \vec{1}$ (resp.~$Y_{s,t}(\pd a)= \vec{1}$). We say that $a$ got activated (resp.~$s$-\emph{activated}) at time $t$ if $(u,a)$ got activated (resp.~$s$-\emph{activated}) at time $t$ for some $u \in \pd a$.  We further define \bpp $(v,a)$ to be \emph{active} \ehl at time 0 for all $a \in F$, $v \in \pd a$. 
\end{dfn}

\section{A simplified proof of a weaker version}\label{s:weakver}
To illustrate the key ideas of our proof technique, we first define an auxiliary site percolation on the space-time slab of the update history, and use it to prove the following weaker version of Theorem \ref{t:mixing}.

\begin{thm}\label{t:mixing_weak}
    For every $n$-vertex hypergraph $G$ of edge size $k$ and maximal degree $\Delta $, the Glauber dynamics mixes in $O(n\log n)$ time if the graph satisfies one of the following conditions:
    \begin{enumerate}[1.]
        \item $\Delta \le 2^{k/2}/(\sqrt{8} k^{2})$.
        \item $\Delta \le 2^{k}/(9k^3)$ and $G$ is linear.
    \end{enumerate}
\end{thm}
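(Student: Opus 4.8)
The plan is to bound $\P(\tcoup > t)$ via a percolation argument on the space-time slab $V \times \R_+$, exploiting Lemma~\ref{l:discrep}: if coupling has not succeeded by time $t$, there is a discrepancy sequence $\zeta$ running from time $0$ up to time $t$. The idea is to discretize the time interval $[0,t]$ into blocks of length $k$ (so roughly $t/k$ blocks), and to declare a pair $(a, \text{block } I)$ \emph{open} if $a$ got activated at some time in $I$ — equivalently, by \eqref{e:bad_al} and Definition~\ref{d:active}, if there is some $s \in I$ with $Y_s(\pd a) = \vec 1$ (where $Y_s = Y_{0,s}$). Since $Y_{0,s}$ is just a product of independent rate-$1$ refreshed $\{0,1\}$-chains on the hypercube, at any fixed time each coordinate is $1$ with probability close to $1/2$ (it equals $1/2$ once the site has been refreshed, which happens quickly), so $\P(Y_s(\pd a) = \vec 1) \le 2^{-k}(1+o(1))$ at a fixed time; the key quantitative input is to show that the probability that $Y_s(\pd a) = \vec 1$ for \emph{some} $s$ in a block of length $k$ is still only $O(k \cdot 2^{-k})$ — intuitively because on a length-$k$ window the number of refresh events at the $k$ sites of $\pd a$ is $O(k)$, and each reshuffling gives an independent chance $\le 2^{-k}$ of hitting all-ones, so a union bound over the $O(k)$ sub-intervals of constancy yields $O(k^2 2^{-k})$ (one should be able to sharpen the $k^2$ to $k$, but $k^2$ suffices for the stated weaker bound). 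Because distinct time blocks use disjoint portions of the Poisson update sequence, the states of $(a,I)$ and $(a,I')$ for $I \ne I'$ are \emph{independent}, and for fixed block $I$ the events over different $a$ are determined by the update sequence restricted to $\bigcup_v \{v\} \times I$, which gives enough independence to run a Peierls-type argument.

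The second step is to turn a discrepancy sequence into a connected structure in this percolation. A discrepancy sequence $\zeta = ((v'_\ell, t'_\ell, a'_\ell))_\ell$ visits hyperedges $a'_0, a'_1, \dots, a'_L$ with consecutive ones sharing a vertex (item~2 in the definition of $\zeta^\circ$), and by \eqref{e:bad_al} each $a'_\ell$ is activated at time $t'_\ell$, hence the block containing $t'_\ell$ is open for $a'_\ell$. Crucially, because $v'_\ell$ is not updated on $(t'_{\ell}, t'_{\ell+1}]$ backward, consecutive update times in the sequence cannot be too far apart in a way that matters — actually the relevant point is the \emph{reverse}: consecutive $a'_\ell, a'_{\ell+1}$ are either the same hyperedge or adjacent in the line graph of hyperedges, and the times $t'_\ell$ increase, so the sequence of (hyperedge, block) pairs forms a path, in the auxiliary graph on $F \times \{\text{blocks}\}$, that connects some vertex at block $0$ up to a vertex at the last block. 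In the line graph on $F$, each $a$ has at most $k\Delta$ neighbours, and in the block direction a path of time-length $t$ crosses $\ge t/k - 1$ blocks; so a vertical crossing of height $\approx t/k$ must occur in an open cluster. Counting: the number of candidate paths of combinatorial length $m$ starting from a fixed hyperedge is at most $(Ck\Delta)^m$, each visited (hyperedge, block) pair is open independently with probability $p = O(k^2 2^{-k})$ after organizing the path to reuse each block at most boundedly often (here one must be a little careful — within a single block a path can wander through many hyperedges, so one groups the path into maximal same-block runs and notes that a run of length $j$ through adjacent hyperedges, all activated in the same block, still has probability $\le (\text{something})^{j}$ by the disjoint-witnesses structure, or more simply one bounds the number of \emph{distinct blocks} used from below by $t/k$ and the number of open pairs by the path length).

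The final step is the union bound: summing over the $n$ choices of starting hyperedge, over path lengths $m \ge t/k$, and using that each path of length $m$ is ``open'' with probability $\le p^{c m}$ for the appropriate fraction $c$ of its pairs, one gets $\P(\tcoup > t) \le n (C k \Delta \cdot p^{c})^{t/k}$ for constants $C, c$; under condition~1, $\Delta \le 2^{k/2}/(\sqrt 8\, k^2)$ makes $k\Delta \cdot (k^2 2^{-k})^{c} < 1$ (the exponent $k/2$ is exactly what balances $\Delta \sim 2^{k/2}$ against $p^{1/2} \sim 2^{-k/2}$, with the $k^2/\sqrt 8$ absorbing the polynomial and constant losses), so $\P(\tcoup > t) \le n e^{-c' t/k}$, which is $< 1/4$ for $t = O(k \log n)$; then Proposition~\ref{p:tmixtmix} and \eqref{e:dtv_coupling} give discrete mixing time $O(n k \log n) = O(n \log n)$ for fixed $k$ — or more precisely the bound is $O(n \log n)$ with the implied constant depending on $k$, consistent with the theorem statement. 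For linear hypergraphs (condition~2), two hyperedges share at most one vertex, which means a discrepancy path cannot ``double back'' cheaply and the effective branching / witness-overlap structure improves, allowing $p$ to be treated more favourably (essentially one saves a factor related to $k$ in the exponent balance), giving the better threshold $\Delta \le 2^k/(9k^3)$; I would set this case up by the same percolation but with a refined count of paths in the line graph of a linear hypergraph.

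\textbf{Main obstacle.} The delicate point is the within-block behaviour of a discrepancy path: a single block of length $k$ can contain many update events, so the path can traverse a long chain of hyperedges all ``activated during the same block,'' and one must verify that such a same-block chain is still exponentially unlikely with the right rate — i.e. that activations of adjacent hyperedges in the same short time window are not too positively correlated to kill the Peierls bound. Getting the bookkeeping right so that each unit of path length genuinely costs a factor $\approx k^2 2^{-k}$ (rather than merely each distinct block costing that), while simultaneously keeping enough independence across the structure, is the crux; this is presumably why the authors only discretize into blocks here and defer the sharp $\Delta \le c 2^{k/2}$ to a more refined auxiliary process in later sections.
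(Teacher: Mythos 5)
Your overall strategy — a space-time percolation/Peierls argument over blocks of length $k$ in $F\times\N$, charging roughly $2^{-k}$ per two steps so that $\Delta\asymp 2^{k/2}$ is the balance point — is indeed the route the paper takes in Section~\ref{s:weakver}, but two of your key claims have genuine gaps. First, you define openness of $(a,I)$ through $Y_{0,s}(\pd a)=\vec 1$ for some $s\in I$ and then assert that distinct blocks are independent ``because they use disjoint portions of the Poisson update sequence.'' With $Y_{0,s}$ this is simply false: that event depends on all updates in $\pd a\times(0,\sup I)$, so blocks are strongly dependent (e.g.\ if no vertex of $\pd a$ is ever updated, every block is open). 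The paper's fix is to define activation of $(a,i)$ via the restarted process $Y_{T_{i-1},s}$ (all-ones at the start of the \emph{previous} block), which localizes the event to updates in $\pd a\times(T_{i-1},T_{i+1})$, and to use the monotonicity $Y_{0,s}\le Y_{T_{i-1},s}$ of Proposition~\ref{p:monotonicity} so that the discrepancy-sequence condition \eqref{e:bad_al} still implies activation. Even with this localization, consecutive sites are \emph{not} independent; independence only holds for sites non-adjacent in the auxiliary graph, which is why the paper passes to minimal paths and extracts a factor only from every other site (and, in the general case, must also handle overlaps $|\pd a\cap\pd b|$ up to $k-1$ via the conditional bound \eqref{e:percprop2}). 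Your ``main obstacle'' paragraph correctly identifies the same-block tangles but does not supply this mechanism.

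Second, and more fundamentally, your Peierls bound $\P(\tcoup>t)\le n(Ck\Delta\, p^{c})^{t/k}$ presumes that a vertical crossing of height $t/k$ consists of $\Theta(t/k)$ \emph{activated} sites. It need not: a discrepancy sequence with few jumps crosses many blocks merely because some vertex $v_{-\ell}$ is never updated over a long stretch, and in those intermediate blocks the relevant hyperedge is not activated at all, so the pairs $(a'_\ell,\mathrm{block}(t'_\ell))$ and $(a'_{\ell+1},\mathrm{block}(t'_{\ell+1}))$ are not adjacent in your auxiliary graph and your ``path'' is disconnected. The paper closes exactly this hole with the \emph{susceptible} sites of Definition~\ref{d: actvsuceptible} (some vertex of $\pd a$ not updated during the block, probability $\le ke^{-k}$), declaring a site bad if it is active or sits above an active site through a chain of susceptible ones; every block of the crossing is then charged either $\approx k^2 2^{-k}$ or $\le ke^{-k}$, and Lemma~\ref{l:reduction_weak} verifies the resulting connectivity. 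Without an analogue of the susceptible sites (or a separate bound on long holding times, as in Lemma~\ref{l:longjumps} of the sharper argument), your first-moment computation does not control $\P(\tcoup>t)$. The linear case has a further unaddressed wrinkle: the dangerous correlation there is between $(a,i)$ and $(a,i+1)$ for the \emph{same} hyperedge, which the paper handles by modifying the auxiliary graph to $\bar G_F$; ``saving a factor of $k$'' by a refined path count alone does not address it.
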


\subsection{The auxiliary percolation process}
We break up the space-time slab into time intervals $\bpp\seq{[T_i,T_{i+1})}_{i\ge 0}$ of length $k$, where $T_i \equiv ik$, $i=0,1,\dots$
We shall neglect the possibility that a certain edge got activated precisely at some time $T_i$, as this has probability 0.
For $t>0$, define $ i(t) \equiv \lfloor t/k \rfloor$.
Let $\tGF$ be an oriented graph with vertex-set $F\times \N$ and edge-set $\tilde{E}_F$ satisfying that for any pairs of hyperedges $a,b\in F$ and integers $i,j \in \N$, there is an (oriented) edge from site $(a,i)$ to site $(b,j)$ in $\tilde{E}_F$ if and only if
\begin{equation}\label{e:tG}
    \pd a \cap \pd b \neq \varnothing
    \textup{ and } j-i \in \{0, 1\}.
\end{equation}
\begin{dfn}
\label{d: actvsuceptible}
Fix an update sequence $\xi$.  We say that a site $(a,i)$ is \emph{active} if $i = 0$ or  $a$ is $T_{i-1}$-activated at some time $t\in{[}T_{i},T_{i+1})$. We say that a site $(a,i)$ is \emph{susceptible} if there exists $v \in \pd a$ such that $v$ is not updated during the time interval ${[}T_i, T_{i+1})$.  A site $(a,i)$ is then called \emph{bad} if either it is active or  there exists $0\le j<i$ such that $(a,j)$ is active and $(a,\ell)$ is susceptible  for all $j+1\le \ell \le i $.
\end{dfn}\bpp
An example of the site percolation is given in Figure \ref{fig:siteperco1} where the underlying graph $\tGF$ is given in Figure \ref{fig:siteperco2}. \ehl 
The set of bad sites can be viewed as a site percolation on the graph $\tGF$ in which each site $(a,i)$ is open if it is bad with respect to the update sequence $\xi$.
The next lemma relates the success of the grand coupling to the existence of open paths in the aforementioned site percolation.
\begin{lem}\label{l:reduction_weak}
    For every update sequence $\xi$ satisfying $\tcoup>T_{M+1} = (M+1) k$, there exists an oriented path of sites in $\tGF$ that starts from $F\times \set{0}$, ends at $F\times \set{M}$ and satisfies that every site along the path is bad with respect to $\xi$.
\end{lem}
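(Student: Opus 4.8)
The plan is to take the discrepancy sequence $\zeta$ guaranteed by Lemma~\ref{l:discrep} and discretize it along the time intervals $\{[T_i,T_{i+1})\}$, producing a sequence of hyperedges that witnesses the desired oriented open path in $\tGF$. First I would apply Lemma~\ref{l:discrep}: since $\tcoup > T_{M+1}$, there is a discrepancy sequence $\zeta = ((v'_\ell,t'_\ell,a'_\ell))_{0\le \ell\le L}$ up to time $t_1 = T_{M+1}$, with update times $0 = t'_0 \le t'_1 \le \cdots \le t'_L < t'_{L+1} = T_{M+1}$ (reverting momentarily to the backward indexing, $T_{M+1} = t_1 > t_0 > \cdots > t_{-L} = 0$). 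The key structural facts I will use are: (i) each update $(v_{-\ell},t_{-\ell},1)$ is blocked by $a_{-\ell}$ in exactly one configuration, so by \eqref{e:bad_al} we have $\Ind{Y_{t_{-\ell}}(\pd a_{-\ell})} = \vec 1$; (ii) consecutive hyperedges share a vertex, namely $v_{-(\ell+1)} \in \pd a_{-\ell} \cap \pd a_{-(\ell+1)}$ (and $a_{-L} = a_{-L+1}$); and (iii) property~3 of the discrepancy sequence says $v_{-\ell}$ is not updated in $(t_{-\ell}, t_{-(\ell-1)}]$.

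Next I would construct the path in $\tGF$. For each time interval index $i \in \{0,1,\dots,M\}$, I need to select a hyperedge $b_i \in F$ so that $(b_i,i)$ is bad and so that the path $(b_0,0),(b_1,1),\dots,(b_M,M)$ is oriented (i.e.\ consecutive selected hyperedges share a vertex, which is automatic from \eqref{e:tG} once $j-i\in\{0,1\}$). Consider the times $t_{-\ell}$ at which disagreements are ``created''. I would let $b_i$ be the hyperedge $a_{-\ell}$ associated with the \emph{largest} index $\ell$ such that $t_{-\ell} \in [T_i, T_{i+1})$, when such an $\ell$ exists; I claim $(b_i,i)$ is then \emph{active}, because the update $(v_{-\ell}, t_{-\ell}, 1)$ lies in $\xi$ with $t_{-\ell}\in[T_i,T_{i+1})$ and by \eqref{e:bad_al} combined with monotonicity $Y_{T_{i-1}}(\pd a_{-\ell}) \ge Y_{t_{-\ell}}(\pd a_{-\ell}) = \vec 1$ (using $T_{i-1} \le t_{-\ell}$, and noting $Y_{s,t}$ is monotone non-decreasing in $s$), so $a_{-\ell}$ is $T_{i-1}$-activated at time $t_{-\ell}$; also $i = 0$ gives active by fiat. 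For the interval indices $i$ containing \emph{no} creation time $t_{-\ell}$, I would instead carry forward the hyperedge from the last creation event before $T_i$: if $t_{-\ell}$ is the last creation time with $t_{-\ell} < T_i$ and $a_{-\ell}$ was used for some earlier site $(a_{-\ell}, j)$ with $j < i$, then for all intermediate intervals $j < \ell' \le i$ the same vertex $v_{-\ell}$ (whose last update before $t_{-(\ell-1)}$ is at $t_{-\ell}$) is not updated, hence each such $(a_{-\ell}, \ell')$ is susceptible, making $(b_i, i) := (a_{-\ell}, i)$ bad by the second clause of Definition~\ref{d: actvsuceptible}. I must check the path stays oriented across the transition from a ``carried'' hyperedge to the next active one: here the shared-vertex requirement follows because the chain $\zeta$ only changes hyperedges via a shared vertex, so any two hyperedges $a_{-\ell}, a_{-\ell'}$ that are consecutive in the list of \emph{distinct} hyperedges appearing in $\zeta$ intersect — more carefully, $a_{-(\ell-1)}$ and $a_{-\ell}$ always share $v_{-\ell}$, so adjacent selected hyperedges, being either equal or adjacent in $\zeta$, always satisfy $\pd a \cap \pd b \neq \emptyset$.

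The main obstacle I anticipate is the bookkeeping at the ``boundaries'' between intervals: ensuring that when a creation time $t_{-\ell}$ falls very close to some $T_i$ the right index $j-i \in \{0,1\}$ relation holds, and ensuring that every interval index $i \in \{0,\dots,M\}$ actually receives a hyperedge (this requires that the creation times, together with the non-update ``gaps'' of property~3, together cover $[0, T_{M+1})$, which follows since the $t_{-\ell}$ decrease to $0$ and between consecutive creation times the relevant vertex is idle). A secondary subtlety is that a single interval may contain several creation times, so one must argue that choosing the largest-index one is consistent with orientation — but since all hyperedges appearing at times within $[T_i, T_{i+1})$ form a connected segment of $\zeta$ they pairwise share vertices along the chain, and in $\tGF$ a self-loop ($j - i = 0$) is allowed, so this causes no problem. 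Once these adjacency and covering checks are in place, the path $(b_0,0),\dots,(b_M,M)$ is the required oriented open path, and since by construction $b_0$ can be taken with $(b_0,0)$ active (first clause) and it terminates at level $M$, the lemma follows.
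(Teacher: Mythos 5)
Your proposal is correct and follows essentially the same route as the paper's proof: extract the discrepancy sequence, turn each ``creation'' update into an active site of its time block (using $Y_{0,t}\le Y_{T_{i-1},t}$), bridge blocks containing no creation by susceptible sites via the non-update property of $v_{-\ell}$ on $(t_{-\ell},t_{-(\ell-1)}]$, and connect consecutive hyperedges through their shared vertex. The only difference is bookkeeping: your ``one hyperedge per block'' selection by itself would break adjacency when a block contains several creation times, but the horizontal ($j-i=0$) edges you invoke at the end — noting that all these in-block hyperedges are active and consecutive ones in $\zeta$ share a vertex — repair this and in effect reproduce the paper's path $\gamma=\cup_\ell \gamma_\ell$ of stacked sites.
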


\begin{proof}
    Fix an update sequence $\xi$ and an integer $M\in \N$ such that $\tcoup> T_{M+1}$. By the discussion in Section \ref{s:discrep}, we can find two initial configurations $\usi,\uta\in\Omega(G)$ and a discrepancy sequence ${\zeta^\circ \equiv \seq{(v_{-\ell},t_{-\ell}, a_{-\ell})}_{0 \le \ell \le L}}$ from time $t_{-L}=0$ up to time $t_1 = T_{M+1}$ with respect to $\usi,\uta$ and $\xi$.  We proceed to construct a path $\gamma\equiv \gamma(\zeta)$ in $\tGF$ based on $\zeta^\circ$. (See Figure~\ref{fig:siteperco} for an illustration.)
\begin{figure}[t]
\begin{center}
\begin{minipage}{.8\linewidth}\centering
    \includegraphics{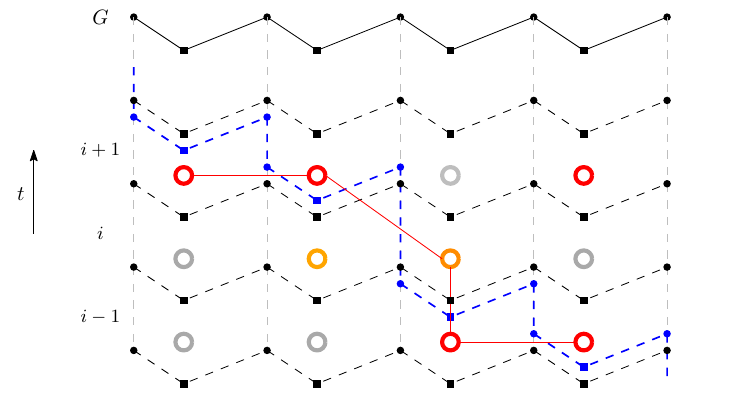}
    \subcaption{\footnotesize The discrepancy sequence projected onto $\tGF$.  }\label{fig:siteperco1}
    \smallskip

    \begin{minipage}{\linewidth}
\flushleft\footnotesize The dashed blue line represents the discrepancy sequence $\zeta$, the red (resp.\ orange, gray) circles represent active (resp.\ susceptible, other) sites in $\tGF$ and the red path represents $\gamma$. Note that the second site on the second row is susceptible but not bad since it is not immediately above any bad sites.
    \end{minipage}
    \bigskip

    \ \, \includegraphics{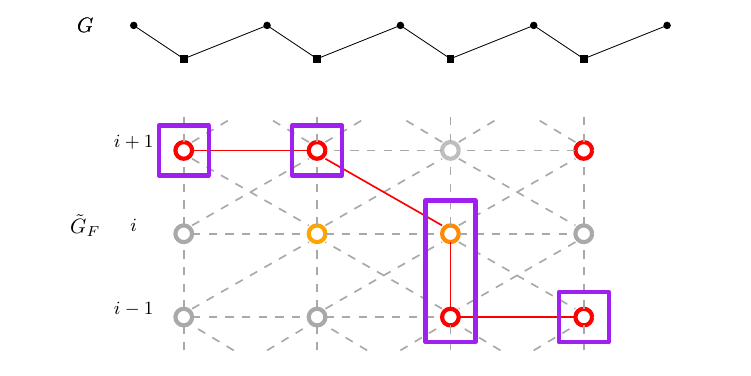}
    \subcaption{\footnotesize An open path in the site percolation on $\tGF$}\label{fig:siteperco2}
    \begin{minipage}{\linewidth}
        \flushleft\footnotesize The dashed gray lines are the underlining graph $\tGF$ where we ignore the direction of the edges. The red line is the openpath $\gamma$ constructed based on the discrepancy path $\zeta$ in panel \textsc{a}. And the purple rectangles marks the segments $\gamma_\ell$.
    \end{minipage}
\end{minipage}
\caption{Discrepancy sequence and site percolation}\label{fig:siteperco}
\end{center}
\end{figure}

    Recall that $i(t) = \lfloor t/k \rfloor $ is the \bpp time interval \ehl $t$ belongs to.
    \bpp Naturally, we would like our path $\gamma$ to pass through sites $\set{(a_{-\ell}, i(t_{-\ell}))}_{0\le \ell \le L}$, where the updates in $\zeta^\circ$ take place, and stays at each hyperedge until the next update happens at a nearby hyperedge. Let $\gamma_\ell$ denote the segment of $\gamma$ corresponding to the $\ell$'th update $(\nu_{{-\ell}},a_{-\ell},t_{-\ell})$ of $\zeta^\circ$. If the next (i.e., $(\ell-1)$'th) update happens at the same time interval as the $\ell$'th update, i.e., $i(t_{-\ell})=i(t_{-\ell+1})$, then we define $\gamma_\ell$ to be a singleton pair  $((a_{-\ell},i(t_{-\ell})))$. Otherwise if $i(t_{-\ell})<i(t_{-\ell+1})$, we define $\gamma_\ell$ as the vertical line
\[
    \gamma_\ell \equiv \gamma_\ell(\zeta^\circ) \equiv
    \seq{(a_{-\ell},j):
    i(t_{-\ell})\le j <
    i(t_{-\ell+1})}
    .
\]
\bpp Let $\gamma \equiv \cup_{0\le \ell \le L}\gamma_\ell$ be the sequential concatenation of $(\gamma_\ell)_{0\le \ell\le L}$. \ehl It is easy to observe that each $\gamma_\ell$ is a connected path in $\tGF$, $\gamma_0$ intersects $F\times \set{M}$ at $(a_0, i(t_1) - 1) = (a_0,M)$, and $\gamma_{L}$ intersects $F\times \set{0}$ at $(a_{-L},0)$. To verify the rest of the requirements of Lemma~\ref{l:reduction_weak}, we first check that every site $(a,i)\in \gamma$ is bad, distinguishing three cases:
\begin{enumerate}[\quad 1.]
    \item For $0\le \ell \le L$, if $\gamma_\ell$ is a singleton, then it must have the form $\gamma_\ell = \set{(a,i) = (a_{-\ell},i(t_{-\ell}))}$.  By condition \eqref{e:bad_al} $Y_{t_{-\ell}}(\pd a_{-\ell}) = \vec 1 $ and hence also $Y_{T_{i-1},t_{-\ell}}(\pd a_{-\ell}) = \vec 1  $, implying that  $(a,i)$ is indeed active.

 \medskip

\item  For $0\le \ell < L$, if $\gamma_\ell$ consists of more than one site, then arguing as above we know that the first site $(a_{-\ell},i(t_{-\ell}))$ is active. For each of the remaining sites $(a,i)$ with  $i(t_{-\ell})<i<i(t_{-(\ell-1)})$, rewriting the assumption gives
    \[t_{-\ell}  <   T_i < T_{i+1}  \le  t_{-(\ell-1)},\]
    i.e., $v_{-\ell}\in\pd a_{-\ell}$ is not updated during the time interval ${[}T_i,T_{i+1})$ and so $(a_{-\ell}, i)\in\gamma_\ell$ is susceptible. Following the second case of Definition~\ref{d: actvsuceptible}, $(a_{-\ell}, i)$ is bad for all   $i(t_{-\ell})<i<i(t_{-(\ell-1)})$.

\medskip

\item For each $(a_{-L},i) \in \gamma_{L}$, recall from the definition of $\zeta^{\circ}$ that $a_{-L}=a_{-(L-1)}$ and that $t_{-}(v_{-L},t_{-(L-1)}) = 0$, i.e.,\ $v_{-L}$ is never updated before $T_{i(t_{-(L-1)})}$. Hence all sites $(a_{-L},i)$ with $i<i(t_{-(L-1)})$ are susceptible. Their badness then follows from the fact that $(a_{-L},0)$ is bad \bpp (recall that $(a,0)$ is defined to be active for all $a \in F$).\ehl
\end{enumerate}

All that is left is to check that $\gamma_\ell$ is connected to $\gamma_{\ell -1}$ for each $1\le \ell \le L$. The connectivity of $\gamma_{L}$ to $\gamma_{L-1}$ is trivial. For $1\le \ell < L$, the first site in $\gamma_{\ell-1}$ is $(a_{-(\ell-1)}, i^\textup{start}_{\ell-1} \equiv i(t_{-(\ell-1)}))$ and the last site in $\gamma_\ell$ is $(a_{-\ell}, i^\textup{end}_{\ell})$ with
\[
    i^\textup{end}_{\ell} \equiv
    \begin{cases}
        i(t_{-(\ell-1)})-1
        & \textup{if } i(t_{-(\ell-1)}) > i(t_{-\ell}),\\
        i(t_{-\ell})
        & \textup{if } i(t_{-(\ell-1)}) = i(t_{-\ell})
    \end{cases}
    .
\]
In either case, site $(a_{-\ell}, i^\textup{end}_\ell)$ is connected to $(a_{-(\ell-1)},i^\textup{start}_{\ell-1} )$ in $\tGF$ since $\pd a_{-\ell} \cap \pd a_{-(\ell-1)} \supseteq \set{v_{-\ell}}\neq \varnothing$ and
\[
i^\textup{start}_{\ell-1} - i^\textup{end}_\ell = 1-\Ind{i(t_{-(\ell-1)}) = i(t_{-\ell})} \in \{0, 1\}.\]
\end{proof}

\subsection{Proof of Theorem~\ref{t:mixing_weak}}
\bpp By Proposition~\ref{p:tmixtmix} and \eqref{e:dtv_coupling}, we aim to show that under the assumptions of Theorems~\ref{t:mixing_weak}, there exists some constant $C$ such that for $M\equiv \lceil C \log n  \rceil $,  $\P[t_{\mathrm{coup}}>T_{M+1}] \le 1/8$.
In fact, as will be clear from the proof, once we find an $M$ for which our analysis yields that $\P[t_{\mathrm{coup}}>T_{M+1}] \le 1/8$, doubling it gives $\P[t_{\mathrm{coup}}>T_{2(M+1)}] \le O(n^{-1})$.  \ehl

Using Lemma \ref{l:reduction_weak}, it suffices to bound the probability that there exists an oriented path from $F \times \set{0} $ consisting of bad sites. We begin with some basic properties of the auxiliary percolation process.

\begin{ppn}\label{p:percprop}
    Let $\A(a,i)$ (resp.\ $\mathsf{S}(a,i)$) be the event that site $(a,i)$ is active (resp.\ susceptible). Then for every distinct $(a,i), (b,j)\in\tGF$ with $i\ge j\ge 1$,
\begin{align}
    &\P(\A(a,i)) \le (k^2+1) 2^{-k} + ke^{-k}
,\quad
\P(\mathsf{S}(a,i+1)) \le ke^{-k}\label{e:percprop1}
\\
&\P(\A(a,i)\mid \A(b,j))
\le (k^2+1) 2^{-k+|\pd a \cap \pd b|}
+
ke^{-k}
,\label{e:percprop2}
\end{align}
Moreover for every set of sites $S$ in $\tGF$, if for all $(b,j)\in S$, $(a,i)$ and $(b,j)$ are not connected in  $\tilde{E}_F$, then $\A(a,i), \mathsf{S}(a,i)$ are independent of the events $\set{\A(b,j),\mathsf{S}(b,j): (b,j)\in S}$.
\end{ppn}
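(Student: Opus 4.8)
The plan is to reduce all four claims to elementary facts about the marked Poisson update process via a single structural observation. For $i\ge 1$ set $B_{a,i}:=\pd a\times(T_{i-1},T_{i+1})\subseteq V\times[0,\infty)$. Then both $\A(a,i)$ and $\mathcal{S}(a,i)$ are measurable with respect to the points and marks of $\xi$ lying in $B_{a,i}$: indeed $\mathcal{S}(a,i)$ only records whether some $v\in\pd a$ is without an update during $[T_i,T_{i+1})$, while activation of $a$ at a time $t\in[T_i,T_{i+1})$ requires an update $(v,t,1)$ with $v\in\pd a$ (a point of $\xi$ in $B_{a,i}$) together with $Y_{T_{i-1},t}(\pd a)=\vec 1$, and for $u\in\pd a$ the coordinate $Y_{T_{i-1},t}(u)$ is a function of the updates of $u$ in $(T_{i-1},t]\subseteq(T_{i-1},T_{i+1})$ only. (For $i=0$, $\A(a,0)$ is the sure event.) Since $B_{a,i}\cap B_{b,j}=(\pd a\cap\pd b)\times\bigl((T_{i-1},T_{i+1})\cap(T_{j-1},T_{j+1})\bigr)$ is empty precisely when $\pd a\cap\pd b=\emptyset$ or $|i-j|\ge 2$ — i.e.\ precisely when $(a,i)$ and $(b,j)$ are not connected in $\tilde{E}_F$ — the last assertion of the proposition follows: if $(a,i)$ is not connected to any site of $S$ then $B_{a,i}$ is disjoint from $\bigcup_{(b,j)\in S}B_{b,j}$, so $\A(a,i),\mathcal{S}(a,i)$ are independent of $\{\A(b,j),\mathcal{S}(b,j):(b,j)\in S\}$ by independence of a Poisson process over disjoint regions. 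The same observation disposes of \eqref{e:percprop2} when $(a,i)$ and $(b,j)$ are not connected, since then $\P(\A(a,i)\mid\A(b,j))=\P(\A(a,i))$, which is bounded by \eqref{e:percprop1} and hence a fortiori by the right-hand side of \eqref{e:percprop2}.

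For \eqref{e:percprop1}, the susceptibility bound is a one-line union bound over the $k=|\pd a|$ vertices, each without an update in a time-interval of length $k$ with probability $e^{-k}$. For the activation bound I would use a first-moment estimate: dominate $\P(\A(a,i))$ by the expected number of updates $(v,t,1)\in\xi$ with $v\in\pd a$, $t\in[T_i,T_{i+1})$ and $Y_{T_{i-1},t^-}(\pd a\setminus\{v\})=\vec 1$, computed by Mecke's formula. Because the $Y_{T_{i-1},t}(u)$, $u\in\pd a$, depend on disjoint families of updates, $\P\bigl(Y_{T_{i-1},t}(u)=1\bigr)=\tfrac12\bigl(1+e^{-(t-T_{i-1})}\bigr)$ independently over $u$, so $\P\bigl(Y_{T_{i-1},t}(\pd a\setminus\{v\})=\vec 1\bigr)=\bigl(\tfrac12(1+e^{-(t-T_{i-1})})\bigr)^{k-1}\le\bigl(\tfrac12(1+e^{-k})\bigr)^{k-1}$ for $t\in[T_i,T_{i+1})$; multiplying by $\tfrac12$ for the mark of the Mecke point, by the length $k$ of the time window and by the $k$ choices of $v$ gives $\P(\A(a,i))\le k^2 2^{-k}(1+e^{-k})^{k-1}\le k^2 2^{-k}e^{ke^{-k}}$, which a direct calculation confirms is at most $(k^2+1)2^{-k}+ke^{-k}$ for every $k\ge 1$. (Equivalently: off an event of probability $\le ke^{-k}$ on which every $u\in\pd a$ is updated in $(T_{i-1},T_i]$, each candidate activating update forces $k$ distinct updates, one per vertex of $\pd a$, to carry mark $1$, of probability $2^{-k}$; a union bound over the $\mathrm{Poisson}(k^2)$-many candidate updates gives $k^2 2^{-k}$.)

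For \eqref{e:percprop2} with $(a,i),(b,j)$ connected — so $\pd a\cap\pd b\neq\emptyset$ and, using $i\ge j$, $i\in\{j,j+1\}$, the case $a=b$ being trivial — I would rerun the first-moment estimate for $\A(a,i)$ intersected with $\A(b,j)$. Since $\A(b,j)$ is measurable on $B_{b,j}$ it constrains only the marks at vertices of $\pd b$; hence in $\prod_{u\in\pd a\setminus\{v\}}\mathbf 1\{Y_{T_{i-1},t}(u)=1\}$ the factors indexed by $u\notin\pd b\cup\{v\}$ — at least $k-1-|\pd a\cap\pd b|$ of them — are independent of $\A(b,j)$ and each still contributes $\le\tfrac12(1+e^{-k})$, while bounding the at most $|\pd a\cap\pd b|$ remaining factors by $1$ produces exactly the gain of $2^{|\pd a\cap\pd b|}$. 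When $i=j+1$, the time window $[T_{j+1},T_{j+2})$ of the candidate activating updates is disjoint from $(T_{j-1},T_{j+1})$, so the Mecke point lands outside $B_{b,j}$, $\A(b,j)$ is untouched by it, and the number of candidate updates is $\mathrm{Poisson}(k^2)$ independent of $\A(b,j)$; the estimate then closes as for \eqref{e:percprop1}, with $2^{-k}$ replaced by $2^{-k+|\pd a\cap\pd b|}$.

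The case $i=j$ is the crux and I expect it to be the main obstacle. Here the candidate activating updates at the shared vertices $\pd a\cap\pd b$ lie inside $B_{b,j}$, so they are correlated with $\A(b,j)$; worse, $\A(b,j)$ is an \emph{increasing} event in the marks (flipping a mark $0\to1$ can only help satisfy $Y_{T_{j-1},\cdot}(\pd b)=\vec 1$ and still supplies a mark-$1$ activating update), so conditioning on it genuinely inflates the relevant update counts. The plan is to split the activating update according to whether its vertex lies in $\pd a\cap\pd b$: the non-shared part is handled as in the $i=j+1$ case, and for the shared part one must show that $\E\bigl[\#\{\text{candidate activating updates at }\pd a\cap\pd b\text{ in }[T_j,T_{j+1})\}\mid\A(b,j)\bigr]$ exceeds its unconditional value (which is $\le k^2$) by only an $O(k)$ additive amount — intuitively because $\A(b,j)$ pins down only $O(k)$ of the fair marks (one per vertex of $\pd b$, plus the activating one). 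Making this precise requires conditioning on a sufficiently fine $\sigma$-algebra, e.g.\ all update times together with the marks at $\pd a\cap\pd b$, so that the remaining marks at $\pd a\setminus\pd b$ and at $\pd b\setminus\pd a$ become independent, and then bounding the conditional mean of the number of mark-$1$ updates at $\pd a\cap\pd b$ given $\A(b,j)$; the additive $O(k)$ slack, together with the slack already present in the $(1+e^{-k})$-factors, is what the ``$+1$'' in $(k^2+1)$ is there to absorb.
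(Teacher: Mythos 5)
Most of what you do is correct, and the parts you complete either coincide with or harmlessly improve on the paper's argument. Your measurability observation ($\A(a,i),\mathcal{S}(a,i)$ depend on $\xi$ only through $\pd a\times(T_{i-1},T_{i+1})$) and the resulting independence over disjoint space--time boxes is exactly the paper's proof of the last assertion. Your Mecke/first-moment bound for \eqref{e:percprop1} is a legitimate alternative to the paper's decomposition (the paper splits according to whether some vertex of $\pd a$ is not updated in $[T_{i-1},T_i)$, whether $Y_{T_{i-1},T_i}(\pd a)=\vec 1$, and a Markov bound on activations in $[T_i,T_{i+1})$), and your numerical absorption of the $(1+e^{-k})^{k-1}$ factor into the ``$+1$'' and the $ke^{-k}$ term does check out. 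The non-adjacent case, the case $a=b$, and the case $i=j+1$ of \eqref{e:percprop2} are also handled correctly.

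The genuine gap is the case $i=j$, $a\neq b$, $\pd a\cap\pd b\neq\emptyset$ of \eqref{e:percprop2}, which you yourself call the crux and leave as a plan. This is precisely the situation the inequality is designed for, and your sketch is not a proof: conditioning on $\A(b,j)$, an existential event whose witness time and activating vertex are random, is exactly the kind of conditioning that cannot be reduced to ``pinning $O(k)$ fair marks'' without a careful decomposition (over a first witness, or a Palm/size-bias argument), and you do not carry this out. It is also quantitatively delicate: with $m=|\pd a\cap\pd b|$, your unconditional main term is about $\bigl[k(k-m)+\tfrac{mk}{2}\bigr]2^{-(k-m)}$, so the slack available inside $(k^2+1)2^{-k+m}+ke^{-k}$ is only about $\bigl(\tfrac{mk}{2}+1\bigr)2^{-(k-m)}$ (the $ke^{-k}$ term is negligible here since $ke^{-k}\ll 2^{-k+m}$ for $m\ge 1$); hence an ``additive $O(k)$'' inflation of the conditional update count suffices only if its constant is at most roughly $m/2$, which your argument does not establish. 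The paper avoids all of this with one domination step that you are missing: it first argues
\[
    \P(\A(a,i)\mid \A(b,j)) \le \P\bigl(\A(a,i)\mid Y_{T_{i-1},s}(\pd a \cap \pd b)=\vec{1} \text{ for all } s \in [T_i,T_{i+1})\bigr),
\]
i.e., the conditioning is replaced by the worst case that the shared coordinates are identically one throughout the window, and then reruns the unconditional argument on $\pd a\setminus\pd b$ alone. Since forcing marks to one does not change the law of the unmarked update process, there is no count-inflation issue and the bound $(k^2+1)2^{-k+|\pd a\cap\pd b|}+ke^{-k}$ follows at once, uniformly in $i\in\{j,j+1\}$. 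To complete your proof you need either this monotonicity/worst-case-conditioning step or a rigorous (and tighter than stated) version of your size-bias estimate.
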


\begin{proof}
    The second part of \eqref{e:percprop1} is simply a union bound. To show the first part of \eqref{e:percprop1}, we notice that if $\A(a,i)$ happens then \bpp so does \ehl one of the following scenarios
\begin{itemize}
\item[(a)]
    Some  $v \in \pd a$ is not updated in $ {[}T_{i-1},T_{i}) $,  namely $\mathsf{S}(a,i-1)$ happens.
\item[(b)] \bpp
    Case  (a)  fails but $a$ gets $T_{i-1}$-activated at some time $s \in {[}T_{i},T_{i+1})$.
\end{itemize}
Case (a) happens with \bpp probability at most $ke^{-k}$. \ehl
Conditioned on the failure of case (a), namely every $v \in \pd a$ being updated in $[T_{i-1},T_{i})$, $(Y_{T_{i-1},s}(u))_{u \in \pd a } $ become $\mathrm{i.i.d.} $ Bernoulli($1/2$) r.v.'s for all $T_{i} \le s < T_{i+1}$. \bpp For case (b) to happen, there must exist $u\in\pd a$ and $s \in {[}T_{i},T_{i+1})$ such that $(u,s,1) \in \xi$ and $Y_{T_{i-1},s}(\pd a)=\vec{1}$. Hence conditioned on the failure of case $a$, by Markov's inequality the probability of case (b) is at most $k^2 2^{-k}$, where the term $k^2$ represents the expected number of updates of the vertices in $\pd a$ during ${[}T_{i},T_{i+1})$ and $2^{-k}$ is the probability that $Y_{T_{i-1},s}(\pd a)=\vec{1}$ at the time of update. Combining  the two cases  gives the first half of \eqref{e:percprop1}. \ehl
    The proof of \eqref{e:percprop2} is completely analogous, where the only difference is that we first argue that
\[
    \P(\A(a,i)\mid \A(b,j)) \le \P(\A(a,i)\mid Y_{T_{i-1},s}(\pd a \cap \pd b)=\vec{1} \text{ for all }s \in {[}T_i,T_{i+1}) ).
\]
and then apply the same reasoning as before to $\pd a \setminus \pd b$.

Finally, note that the events $\A(a,i)$ and $\mathsf{S}(a,i)$ depend on $\xi$ only through $\pd a \times (T_{i-1},T_{i+1})$. Hence the independency result follows from the independency of Poisson point process.
\end{proof}

\medskip

In order to perform a first moment calculation in an efficient manner we restrict our attention to a special type of path.

\begin{dfn}
We say that an oriented path $((a_0,i_0),(a_1,i_1),\ldots , (a_r,i_r)) $ in $\tGF$ is a \emph{minimal path} if $i_0 = 0$, $i_1=1$ and for all $j_1 \le r-2$ and $j_2 \in [j_1+2,r]$ we have that $((a_{j_{1}},i_{j_{1}}),(a_{j_{2}},i_{j_{2}})) \notin \tilde E_F $. Let $\Gamma_{\mathrm{min},r}$ be the collection of all minimal paths of length $r$.
\end{dfn}
\bpp Here we require $i_1=1$ because we are primarily interested in upperbounding the number of open paths connecting $F\times \set{0}$ to $F\times{M}$ for some $M\ge 1$ and additonal steps within $F\times \set{0}$ will not help. \ehl Observe that every oriented path in $\tGF$ can be transformed into a minimal path by \bpp deleting some vertices from it. \ehl

\medskip

\begin{proof}[Proof of Theorem \ref{t:mixing_weak}, Part 1]
Fix $M\equiv \lceil C \log n  \rceil $ where the constant $C$ shall be determined later. Note that by Lemma \ref{l:reduction_weak}, if $t_{\mathrm{coup}}>T_{M+1} $, then there must be some minimal path
    $$\gamma_M = ((a_0,i_0),(a_1,i_1),\ldots , (a_M,i_M)) \in \Gamma_{\min,M}$$
consisting of bad sites. We now estimate the expected number of such paths. For brevity, we call a minimal path $\gamma$ bad if every site of $\gamma$ is bad.

Using the notations of Proposition~\ref{p:percprop}, for every $\gamma\in \Gamma_{\min,2r}$, we can write
\[
    \E [\Ind{\gamma \textup{ is bad}}]
    \le \P\Big(\bigcap_{\ell = 0}^{2r}
    (\A(a_\ell,i_\ell)
    \cup
    \mathsf{S}(a_\ell,i_\ell))
    \Big)
    \le \P\Big(\bigcap_{\ell = 1}^{r}
    (\A(a_{2\ell},i_{2\ell})
    \cup
\mathsf{S}(a_{2\ell},i_{2\ell}))
    \Big),
\]
where in the last step we discard all odd events in order to obtain the desirable independence.
Indeed, by the definition of a minimal path, for all $j< \ell$, $(a_{2\ell},i_{2\ell})$ is not connected to $(a_{2j},i_{2j})$. Hence by Proposition~\ref{p:percprop}, the events
$\set{\A(a_{2\ell},i_{2\ell})
    \cup \mathsf{S}(a_{2\ell},i_{2\ell})}_{1\le \ell\le r}$
are mutually independent and
\begin{equation}\label{e:Pbad}
    \E [\Ind{\gamma \textup{ is bad}}
    ]\le \prod_{\ell = 1}^{r}
    \big[(k^2 + 1)2^{-k} + 2ke^{-k}\big]
    \le \big(2k^22^{-k}\big)^r
\end{equation}
To conclude the proof we note that
\begin{equation}\label{e:branching}
|\Gamma_{\mathrm{min},2r}| \le n(2k(\Delta -1)+1)^{2r} \le (4k^2 \Delta^2)^r n,
\end{equation}
where the term $n$ accounts for the choice of the initial site of the path. By the above analysis the expected number of paths in $\Gamma_{\mathrm{min},2r} $ consisting of bad sites is at most $(4k^2 \Delta^2 \times 2k^22^{-k})^r n$. By our assumption that $\Delta < 2^{k/2} /(\sqrt{8} k^2 )$, we get that there exists some constant $C $ such that for $r= \lceil C \log n \rceil$ the last expectation is at most $\bpp 1/8$. This concludes the proof of part 1 of Theorem \ref{t:mixing_weak}.
\end{proof}

We now explain the necessary adaptations for the proof of part 2. In the new setup if $a \neq b$ and $\pd a \cap \pd b \neq \emptyset $, then $|\pd a \cap \pd b|=1$. Thus the event that $(a,i)$ is bad barely affects the probability that  $(b,j)$ is bad (for $a \neq b$). However, it is more challenging to control the conditional probability that $(a,i+1)$ is bad, given that $(a,i) $ is bad. To overcome this difficulty we modify the underlying graph $\tGF$ slightly.

\begin{dfn}
Let $\bar G_F $ be an oriented graph with vertex-set $F\times \N$ and edge-set $\bar{E}_F$ satisfying that for every pairs of hyperedges $a,b\in F$ and integers $i,j \in \N$, there is an (oriented) edge from site $(a,i)$ to site $(b,j)$ in $\bar{E}_F$ if and only if $a=b$ and $j=i+2$ or $a \neq b$ and
\begin{equation}\label{e:tG2}
    \pd a \cap \pd b \neq \varnothing
    \textup{ and } j-i \in \{0, 1,2\}.
\end{equation}
We say that an oriented path $((a_0,i_0),(a_1,i_1),\ldots , (a_r,i_r)) $ in $\bar G_F$ is a \emph{minimal path} if
$i_0=1$, $i_1>1$ and for all $j_1 \le r-2$ and $j_2 \in [j_1+2,r]$ we have that $((a_{j_{1}},i_{j_{1}}),(a_{j_{2}},i_{j_{2}})) \notin \bar E_F $.
Let $\bar \Gamma_{\mathrm{min},r}$ be the collection of all minimal paths of length $r$ in $\bar G_F$.
\end{dfn}

Observe that every path $\gamma$ in $\tGF$ can be transformed into a path in $\bar{G}_F$ by deleting some of its vertices. Namely, whenever we have two consecutive steps in $\gamma$ such that $((a_{\ell},i_{\ell}), (a_{\ell+1},i_{\ell+1})) \in \tilde{E}_F\setminus \bar{E}_F$, it must satisfy that $(a_{\ell+1},i_{\ell+1}) = (a_\ell,i_\ell +1)$ and one can check that in this case $((a_{\ell},i_{\ell}),(a_{\ell+2},i_{\ell+2}))\in \bar{G}_F$. By repeatedly deleting $(a_{\ell+1},i_{\ell+1})$ from $\gamma$, where $\ell$ is the minimal index such that  $(a_{\ell+1},i_{\ell+1}) = (a_\ell,i_\ell +1)$ and $(a_\ell,i_\ell ) $ has not been deleted already, we obtain a path in $\bar G_F $. \bpp From there, one can further take a subpath such that it is a minimal path in $\bar G_F$.\ehl

\begin{proof}[Proof of Theorem~\ref{t:mixing_weak}, Part 2]
    As before, if $t_{\mathrm{coup}}>T_{2(M+1)}$, then there must be some minimal path $((a_0,i_0),(a_1,i_1),\ldots , (a_M,i_M))$ in $\bar{G}_F $  consisting of bad sites. We argue that the conditional probability that $(a_{\ell},i_{\ell})$ is bad, given that $(a_0,i_0),(a_1,i_1),\ldots (a_{ \ell-1},i_{\ell-1}) $ are all bad, is at most
\[
        (1+k^2)2^{-(k-1)}+2ke^{-k} \le 3 k^2 2^{-k}
    \quad \textup{(for $k>2$)}.
\]
Indeed, by the linearity assumption we have that either $ (a_{\ell -1},i_{\ell -1} )=( a_{\ell },i_{\ell }-2)$ or  $|\pd a_{\ell -1} \cap \pd a_{\ell }|=1 $. In the first case, the same independency argument as before shows that the conditional probability that $(a_{\ell},i_{\ell})$ is also bad is at most $(1+k^2)2^{-k}+2e^{-k}$. For the second case, $(a_\ell,i_\ell)$ must be active and the desired bound follows from \eqref{e:percprop2}.

As before, we conclude by noting that  $|\bar \Gamma_{\mathrm{min},r}| \le n(3k(\Delta -1)+1)^{r} \le (3k \Delta)^r n$ and so the expected number of paths in $\bar \Gamma_{\mathrm{min},r} $ consisting of bad sites is at most $$(3k \Delta \times 3 k^2 2^{-k} )^r n=(9k^3 \Delta 2^{-k})^r n \le \bpp 1/8 \ehl, $$
provided that $r \ge C \log n  $.
\end{proof}

\section{General Hypergraphs}\label{s:generalG}
\subsection{Minimal path}
In this subsection we give the general setup for Theorem~\ref{t:mixing} and \ref{t:mixing_regular}. The first improvement from Section~\ref{s:weakver} is based on the observation that although it takes time $k$ to update all vertices of a hyperedge at least once with high probability, on average it only takes $O(1)$ time to update each vertex. This motivates us to study the propagation of discrepancies on both vertices and hyperedges in continuous time.

Recall the definition of activation from Definition~\ref{d:active}.
\bpp Observe that, in order for a discrepancy to propagate from an active hyperedge to a nearby hyperedge, the latter hyperedge must be activated before every vertex in their intersection is updated at least once, erasing all possible {dependence}. Thus we define the following continuous time analog of the time block from the previous section.
\ehl
\begin{dfn}\label{d:deactive}
For each $v \in V$, $t\ge 0$, we define the \emph{deactivation time} of $v$ (after time $t$) \bpp as the first time $v$ is updated after time $t$, namely,\ehl
\[
    T_+(v;t) \equiv
    \inf \left\{ s>t:
        (v,s,1)\in \xi \text{ or }(v,s,0)\in \xi
    \right\},
\]
For each $a,b \in F$ and  $t\ge 0$, we define the \emph{relative deactivation time} of $a$ w.r.t.\ $b$ (after time $t$) as \bpp the first time $s>t$ such that every vertex in the intersection $\pd a\cap \pd b$ is updated at least once by time $s$, namely,\ehl
\[
    T_+(a;b,t) \equiv
    \max_{v\in \pd a} T_+(v;b,t)
    ,\quad
    \textup{ where }
    T_+(v;b,t) \equiv
    \begin{cases}
        T_+(v;t) & v\in \pd b,\\
        t & v\notin \pd b
    \end{cases}
    .
\]
In particular, the deactivation time of $a$ is
$T_+(a;t) \equiv T_+(a;a,t) = \max_{v\in \pd a} T_+(v;t)$.
\end{dfn}
\bpp Under the definition above, discrepancies can only pass from one hyperedge to another before the first hyperedge is relatively deactivated w.r.t.~the latter hyperedge. In other word, (relative) deactivation time gives the time window of discrepancy propagation. \ehl

Let $\tG$ be the Cartesian product of $G$ and the time interval ${[}0,\infty)$. Namely for two sites $(w_1,t_1),(w_2,t_2)\in \tV \equiv (V\cup F) \times {[}0,\infty)$, there exists an (oriented) edge connecting $(w_1,t_1)$ to $(w_2,t_2)$ in $\tG$ if and only if
    \[
        w_1=w_2,\ \bpp t_1 \le t_2\ehl
        \quad \textup{or} \quad
        w_1 \in \pd w_2,\ t_1 = t_2.
    \]
   The set $\tG$ can be viewed as the continuous version of the space-time slab.
\begin{dfn}
\label{def:path}
Given a sequence $\gamma_L\equiv \set{(v_\ell, a_\ell,t_\ell)}_{0\le \ell \le L}$, we say that $\gamma_L$ is a path of length $L$ in $\tG$ if $t_0 =0$, $v_0\in \pd a_0, a_0  = a_1$  and for each $1\le \ell\le L$ we have that $t_{\ell-1}<t_{\ell}$, $v_\ell\in \pd a_\ell$ and $\pd a_{\ell-1} \cap \pd a_{\ell} \neq \varnothing$.   We further say that $\gamma_L$ is a path up to time $t$ to indicate that $t_L<t$.  Let $\Gamma_L$ denote the set of paths of length $L$ and $\Gamma_L(t)$ denote the subset of paths up to time~$t$.
\end{dfn}
\bpp In the previous section, we defined an auxiliary percolation on the discrete space-time slab such that every discrepancy sequence can be projected onto an open path in the percolation. Without a good analog of the percolation in continuous time, we look at the analog of ``open paths'' directly, i.e.,~paths of $\Gamma_L(t)$ that can be interpreted as the projections of discrepancy sequences.
\ehl
\begin{dfn}\label{d:minpath}
    Fix an update sequence $\xi$ and a path $\gamma_L= \set{(v_\ell, a_\ell,t_\ell)_{0\le \ell \le L}}\in \Gamma_L$. For each $1\le \ell \le L$, we say  that $(v_\ell,a_\ell, t_\ell)$,  the $\ell$th step of $\gamma_L$, is a \emph{minimal step} of $\gamma_L$ if all of the following six events hold:
\begin{align}
    &\nonumber
    \A_{\ell}
    \equiv
    \set{(v_{\ell},t_{\ell},1)\in \xi}
    ,\qquad\qquad\qquad\quad\ \,
    \BB_{\ell}
    \equiv
    \set{t_\ell \le T_+(a_\ell;a_{\ell-1},t_{\ell-1})}
    ,\quad
    \\\nonumber &
    \CC_{\ell}
    \equiv
    \begin{cases}
    \set{t_\ell > T_+(a_{\ell}; a_{\ell}, 0)}
    \cap
    (\cap_{r=1}^{\ell-2}
    \set{t_\ell > T_+(a_\ell;a_{r},t_{r})})
    & \ell \ge 2\\
    \varnothing^c
    & \ell = 1\\
    \end{cases}
    ,\\\label{e:minevents}
    &
    \DD^1_{\ell}
    \equiv
    \set{Y_{t_{\ell}}
        ((\pd a_{\ell} \cap \pd a^c_{\ell-1})
        \setminus \set{v_{\ell}})
        = \vec 1}
    ,\quad
    \DD^2_{\ell}
    \equiv
    \set{Y_{t_{\ell}}
        ((\pd a_{\ell} \cap \pd a_{\ell-1}
        )\setminus \set{v_{\ell}})
        = \vec 1}
    ,\\\nonumber
    &
    \EE_{\ell}
    \equiv
    \begin{cases}
        \set{Y_{t_{\ell}}(\pd a_{\ell-1}\setminus\pd a_{\ell}) \neq \vec 1}
        &
        v_\ell\in\pd a_{\ell-1}\cap \pd a_\ell, a_{\ell-1} \neq a_{\ell}
    \\
    \varnothing^c
    &\textup{otherwise}
    \end{cases}
    .
\end{align}
 We say that $\gamma_L$ is a \emph{minimal path} if  $(v_\ell,a_\ell,t_\ell)$ is a minimal step, for each $1\le \ell\le L$.
We further say that $\gamma_L$ is a \emph{minimal path up to time $t$}, if it is a minimal path and $t_L<t\le T_+(v_L;t_L)$.
Let $\Gamma_{\min,L} $ be the set of minimal paths of length $L$. Similarly, we define $\Gamma_{\min,L}(t) \equiv \{\gamma_L \in  \Gamma_{\min,L} : \gamma_L \text{ is a minimal path up to time }t  \} $. We note that both  $\Gamma_{\min,L} $ and $\Gamma_{\min,L}(t)  $ depend on the update sequence $\xi$.
\end{dfn}
\begin{rmk}\bpp
The six events above can be roughly explained as the following:
\begin{enumerate}
\item $\A_\ell$: There is an update to $1$ at vertex $v_\ell$ at time $t_\ell$.
\item $\BB_\ell$: At time $t_\ell$, $a_\ell$ has not been deactivated from the step \emph{immedaitely before} it (i.e., the activation of $a_{\ell-1}$ at time $t_{\ell-1}$).
\item $\CC_\ell$: At time $t_\ell$, $a_\ell$ has been deactivated from all steps \emph{at least two steps ago} (i.e., the activations of $a_{r}$ at time $t_{r}$ for $r\le \ell-2$).
\item $\DD^1_\ell\cap \DD^2_\ell$: At time $t_\ell$, the configuration on $\pd a_\ell \setminus\set{v_\ell}$ is all $1$. Thus update at $v_\ell$ is prone to be blocked by $a_\ell$.
Here we differentiate $\DD^1_\ell$ and $\DD^2_\ell$ because the conditional distribution of $Y_{t_\ell}$ on $\pd a_\ell \cap a_{\ell-1}$ and $\pd a_\ell \cap a_{\ell-1}^c$ are very different and are easier to be analysed separately.
\item $\EE_\ell$: If consider the $\ell$'th step being $(v_\ell, a_{\ell-1}, t_\ell)$ instead of $(v_\ell,a_\ell,t_\ell)$, then the new tuple $(v_\ell, a_{\ell-1}, t_\ell)$ does not satisfy the first five events  because it violates $\DD^2_\ell$. This requirement ensures that we stay at the same hyperedge whenever possible.
\end{enumerate}
In the definition above, event $\A_\ell\cap\DD^1_\ell\cap\DD^2_\ell$  guarrantees that $(v_\ell,a_\ell)$ is activated at time $t_\ell$.
The event $\BB_\ell$ guarrantees that $a_{\ell}$ has not been deactivated w.r.t.~$a_{\ell-1}$ after time $t_{\ell-1}$. 
Together, events $\set{\A_\ell,\BB_\ell,\DD^1_\ell,\DD^2_\ell}_{1\le\ell\le L}$ imply that $\gamma_L$ can potentially be the projection of some discrepancy sequence $\zeta$ (as will be proved in the lemma below).  

Further, the events $\set{\CC_\ell}_{1\le\ell\le L}$ imply that no subpath of $\gamma_L$ satisfies the same condition while $\set{\EE_\ell}_{1\le\ell\le L}$ further require paths to stay at the same hyperedge whenever possible (this requirement is imposed to obtain a better control on the number of minimal paths), justifying the name ``minimal".
\end{rmk}
\begin{lem}\label{l:reduction}
    For each update sequence $\xi$, if $\tcoup>T$, then there exists a constant $L\ge 0$ and a minimal path $\gamma_L = ((v_\ell, a_\ell, t_\ell))_{0\le \ell \le L} \in \Gamma_{\min,L}(T)$.
\end{lem}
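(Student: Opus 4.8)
The proof is a refinement of the backward-tracing argument already carried out in Section~\ref{s:discrep} and used in Lemma~\ref{l:reduction_weak}, but now keeping track of the finer continuous-time structure rather than coarsely blocking time into intervals of length $k$. The starting point is the discrepancy sequence $\zeta^\circ = ((v_{-\ell}, t_{-\ell}, a_{-\ell}))_{0 \le \ell \le L}$ furnished by Lemma~\ref{l:discrep}: from $\tcoup > T$ we get initial configurations $\usi, \uta$, a terminal discrepancy $v_0$ at time $t_1 = T$, and the whole backward chain down to time $0$, satisfying properties (1)--(3) of Section~\ref{s:discrep} and \eqref{e:bad_al}. I would then reindex it forward in time as $\gamma_L = ((v'_\ell, a'_\ell, t'_\ell))_{0 \le \ell \le L}$ exactly as in the definition of $\zeta$, so that $t'_0 = 0$, $v'_0 \in \pd a'_0$, $a'_0 = a'_1$, and consecutive hyperedges share the vertex $v'_\ell$; this is manifestly a path in $\tG$ in the sense of Definition~\ref{def:path}, and $t'_L = t_0 < T \le T_+(v'_L; t_0)$ since $t_0$ was the \emph{last} update of $v_0 = v'_L$ before $T$, giving the ``up to time $T$'' clause.

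The substance is verifying that each step $(v'_\ell, a'_\ell, t'_\ell)$, $1 \le \ell \le L$, is a \emph{minimal step}, i.e.\ that all six events $\A_\ell, \BB_\ell, \CC_\ell, \DD^1_\ell, \DD^2_\ell, \EE_\ell$ of \eqref{e:minevents} hold. Events $\A_\ell$ (the update $(v'_\ell, t'_\ell, 1)$ exists) and the $\DD$-events (the relevant subsets of $\pd a'_\ell \setminus \{v'_\ell\}$ are all $1$ in $Y_{t'_\ell}$) are essentially restatements of properties (1)--(2) of the discrepancy sequence together with \eqref{e:bad_al}, split according to whether the neighbours lie in $\pd a'_{\ell-1}$ or not — this is bookkeeping using the monotonicity Proposition~\ref{p:monotonicity}. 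The events $\BB_\ell$ and $\CC_\ell$ encode a \emph{minimality} reduction analogous to passing from an arbitrary path to a minimal path in Section~\ref{s:weakver}: $\BB_\ell$ says $t'_\ell$ is not later than the relative deactivation time $T_+(a'_\ell; a'_{\ell-1}, t'_{\ell-1})$ — which should follow because property (3) says $v_{-(\ell)}$ (in backward indexing) is not updated on $(t_{-\ell}, t_{-(\ell-1)}]$, so no vertex shared between consecutive hyperedges refreshes in between — while $\CC_\ell$ (for $\ell \ge 2$) is the condition that $t'_\ell$ is \emph{past} all earlier relative deactivation times $T_+(a'_\ell; a'_r, t'_r)$ for $r \le \ell - 2$; this last one is not automatic and is where one must prune the path. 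Finally $\EE_\ell$ (when $v'_\ell$ lies in the overlap and the hyperedges differ) records that $Y_{t'_\ell}(\pd a'_{\ell-1} \setminus \pd a'_\ell) \ne \vec 1$, which should come from the fact that at time $t'_\ell$ the hyperedge $a'_{\ell-1}$ is not yet "all ones off $v'_\ell$" — otherwise the discrepancy would have been traced through $a'_{\ell-1}$ rather than requiring a new hyperedge.

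I expect the main obstacle to be establishing $\CC_\ell$, i.e.\ showing that the path can be chosen so that every step occurs after all the relative deactivation times of its hyperedge with respect to hyperedges two or more steps earlier. The issue is that the raw discrepancy sequence from Lemma~\ref{l:discrep} need not have this property; one must argue that whenever it fails — say $t'_\ell \le T_+(a'_\ell; a'_r, t'_r)$ for some $r \le \ell - 2$ — the intermediate portion $(v'_{r+1}, a'_{r+1}, t'_{r+1}), \dots, (v'_{\ell-1}, a'_{\ell-1}, t'_{\ell-1})$ can be excised and the two ends spliced (the shared vertex and the time-ordering still making it a valid path in $\tG$), strictly shortening the path. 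Since the path has finite length, iterating this pruning terminates in a genuinely minimal path, and one must check that the excision preserves $\A, \BB, \DD, \EE$ for the surviving steps — these depend only on $\xi$ restricted to neighbourhoods of the surviving sites, so they are unaffected, but the bookkeeping (especially re-deriving $\BB$ at the new splice point) needs care. The remaining events are comparatively routine consequences of the construction in Section~\ref{s:discrep}.
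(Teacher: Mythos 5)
Your plan correctly identifies that $\A_\ell$, $\BB_\ell$, $\DD^1_\ell$, $\DD^2_\ell$ hold for the forward-reindexed discrepancy sequence (indeed $\BB_\ell$ is \eqref{e:before_deact} and $\DD^1_\ell\cap\DD^2_\ell$ follows from $Y_{s_i}(\pd b_i)=\vec 1$), and your excise-and-splice idea for the $\CC_\ell$ events is essentially sound: the trigger $t'_\ell\le T_+(a'_\ell;a'_r,t'_r)$ itself supplies $\BB$ at the splice, and forces $\pd a'_r\cap\pd a'_\ell\neq\varnothing$. But there is a genuine gap at the $\EE_\ell$ events, and your justification for them is not correct. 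The backward tracing of Section~\ref{s:discrep} chooses an \emph{arbitrary} hyperedge that blocks the update in exactly one of the two chains; it has no preference for the hyperedge used at the previous step, and nothing prevents $Y_{t'_\ell}(\pd a'_{\ell-1})=\vec 1$ at a step where $v'_\ell\in\pd a'_{\ell-1}\cap\pd a'_\ell$ and $a'_{\ell-1}\neq a'_\ell$ — in which case $\EE_\ell$ fails for the raw sequence, and no amount of excising intermediate steps (which keeps the hyperedge labels of the surviving steps, and in fact creates a \emph{new, unverified} $\EE$-condition at each splice point relative to the new predecessor $a'_r$) can repair it. You cannot redirect the tracing through $a'_{\ell-1}$ either, since $a'_{\ell-1}$ need not be a blocking hyperedge for the update at time $t'_\ell$. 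This is exactly why the paper's proof is not a pruning of the discrepancy sequence but an inductive construction in which the appended step may be \emph{relabelled} with the earlier path hyperedge $a^{i-1}_{\ell_\star}$ whenever $u_i\in\pd a^{i-1}_{\ell_\star}$ and $Y_{s_i}(\pd a^{i-1}_{\ell_\star})=\vec 1$ (see \eqref{e:EEreason}); then either the step stays on the same hyperedge and $\EE$ is vacuous, or $Y_{s_i}(\pd a^{i-1}_{\ell_\star})\neq\vec 1$, which together with $Y_{s_i}(\pd b_i)=\vec 1$ forces a zero in $\pd a^{i-1}_{\ell_\star}\setminus\pd b_i$. A minimal path is therefore not in general a (time-pruned) subsequence of the discrepancy sequence with the same hyperedges, and your scheme as described cannot produce one.

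A second, smaller omission: your pruning only addresses violations of the second clause of $\CC_\ell$, i.e.\ $t'_\ell\le T_+(a'_\ell;a'_r,t'_r)$ with $r\le\ell-2$. When the first clause fails, i.e.\ $t'_\ell\le T_+(a'_\ell;a'_\ell,0)$ because some vertex of $\pd a'_\ell$ has never been updated before $t'_\ell$, excision cannot help; one must discard the entire prefix and restart with the length-one path $((v_\star,a'_\ell,0),(v'_\ell,a'_\ell,t'_\ell))$ for a never-updated $v_\star\in\pd a'_\ell$ (the paper's $\ell_\star=0$ case). Relatedly, excising all steps between index $0$ and index $\ell$ would break the structural requirement $a_0=a_1$ of Definition~\ref{def:path}, so the restart move is genuinely needed rather than optional bookkeeping. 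With the relabelling device for $\EE$ and the restart move added, your forward reindexing plus iterative truncation becomes, in substance, the paper's induction over the discrepancy sequence.
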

\begin{proof}
    Recall the construction of a discrepancy sequence $\zeta = \zeta(\xi) = ((u_i,b_i,s_i))_{0\le i \le M}$ (here we use the representation moving forward in time, with $(b_0,s_0)=(b_1,0)$). It satisfies  for all $1 \le i \le M$, that (a) $u_{i-1},u_i\in \pd b_i$, (b) $(u_i,s_i,1)\in\xi$, (c) $Y_{s_i}(b_i) =\vec 1$ and (d)
\begin{equation}\label{e:before_deact}
    s_{i-1} < s_{i} \le T_+(u_{i-1};s_{i-1}),
    \quad \text{hence in particular,} \quad
    s_{i-1} < s_{i}\le T_+(b_i;b_{i-1},s_{i-1}).
\end{equation}
One can construct a minimal path based on $\zeta$ as follows:
    \begin{enumerate}[\quad 1.]
        \item Let $\gamma^1 \equiv ((v^1_\ell,a^1_\ell,t^1_\ell))_{0\le\ell\le 1} \equiv ((u_\ell,b_\ell,s_\ell))_{0\le\ell\le 1}$. It follows from the construction of a discrepancy sequence, in particular from the fact that $b_0 = b_1$, that $\gamma^1$ is a minimal path of length $L_1\equiv 1$  up to time $s_2$.
        \item For $2\le i\le M$, suppose we have already constructed $\gamma^{i-1} = ((v^{i-1}_\ell,a^{i-1}_\ell,t^{i-1}_\ell))_{0\le \ell \le L_{i-1}}\in \Gamma_{\min, L_{i-1}} (s_i)$ with $L_{i-1} \le i-1$ and
\begin{equation}\label{e:minpath}
    (v^{i-1}_{L_{i-1}},
    t^{i-1}_{L_{i-1}})
    = (u_{i-1},
    t_{i-1})
    .
\end{equation}
To construct $\gamma^{i}$, let
\[
    \ell_\star \equiv
    \ell_\star(i)
    \equiv
    \begin{cases}
    0
    &s_i \le T_+(b_i;b_i,0)\\
    \min_{1\le \ell \le L_{i-1}}
    \set{\ell: s_i \le T_+(b_i;a^{i-1}_\ell,t^{i-1}_\ell)}
    & \textup{otherwise}
    \end{cases}
    .
\]
Note that $\ell_\star$ is well-defined since by \eqref{e:before_deact} and \eqref{e:minpath}, the condition $\set{\ell: s_i \le T_+(b_i;a^{i-1}_\ell,t^{i-1}_\ell)}$ is satisfied by $\ell = L_{i-1}$. If $\ell_\star = 0$, then there exists
$v_\star \in \pd b_i$
such that $s_i \le T_+(v_\star; 0)$. In this case, we define $L_i = 1$ and
$$\gamma^i \equiv
((v^i_\ell,a^i_\ell,t^i_\ell))_{0\le\ell\le 1}
\equiv
((v_\star, b_i, 0), (u_i,b_i,s_i)).
$$
 Otherwise, we define $L_i = \ell_\star + 1$, $(v^i_\ell, a^i_\ell, t^i_\ell) \equiv  (v^{i-1}_\ell,a^{i-1}_\ell,t^{i-1}_\ell), 0\le \ell \le \ell_\star$ and
\begin{equation}\label{e:EEreason}
    (v^i_{L_i},a^i_{L_i},t^i_{L_i}) \equiv
    \begin{cases}
        (u_i,a^{i-1}_{\ell_\star},s_i)
        & \textup{if } u_i \in \pd a^{i-1}_{\ell_\star}
        \textup{ and $Y_{s_i}(\pd a^{i-1}_{\ell_\star})=\vec{1} $,}\\
        (u_i,b_i,s_i) & \textup{otherwise.}
    \end{cases}
\end{equation}
In either case, one can check that the six events defined in \eqref{e:minevents} are satisfied for $\ell = L_i$ and hence $(v^i_{L_i},a^i_{L_i},t^i_{L_i})$  is a minimal step of $\gamma^i$. Indeed, the occurrence of $\A_{L_i}, \BB_{L_i}, \DD^1_{L_i},\DD^2_{L_i}$ follows from the construction of a discrepancy sequence, the occurrence of $\CC_{L_i}$ follows from the minimality of $\ell_\star$ and that of $\EE_{L_i}$ follows from \eqref{e:EEreason}. By construction, $((v^i_r,a^i_r,t^i_r))_{0\le\ell\le L_i-1}$ is a subpath of $\gamma^{i-1}\in\Gamma_{\min,L_{i-1}}(s_i)$ and hence is minimal path itself. Therefore $\gamma^i$ is a minimal path of length $L_i$ up to $s_{i+1}$.
\end{enumerate}
To conclude the proof, one can take $\gamma^M$ and note that $T_+(v_{L_M}^M;t_{L_M}^M) = T_+(u_M;s_M)\ge T$ by the definition of $\zeta$.
\end{proof}
\begin{rmk}\label{r:proj}
    We will use $\Gamma_{\proj, L}\subset \Gamma_{\min,L}$ to denote the set of minimal paths that are projected from some discrepancy sequences as in the proof above.
\end{rmk}
Lemma~\ref{l:reduction} implies that for every time $T\ge 0$ and integer $L\ge 1$,
\begin{align}
    \P(\tcoup\ge T) &\le
    \P( \Gamma_{\proj,L-1}  (T)\neq \varnothing )
    + \P(\Gamma_{\proj,L} \neq \varnothing)
    \label{e:twocases_proj}
    \\&\le
    \P( \Gamma_{\proj,L-1}  (T)\neq \varnothing )
    + \P(\Gamma_{\min,L} \neq \varnothing)
    .\label{e:twocases}
\end{align}

The next lemma bounds the first term on the right hand side.
\begin{lem}\label{l:longjumps}
    Let $T \in \N$. Denote  $L=\lfloor cT \rfloor $, where $c= \frac{1}{4 \log (2k^{2} \Delta)}$. Then
\[
    \P( \Gamma_{\proj,L-1}  (T)\neq \varnothing )
        \le n\exp(- T/4). \]
\end{lem}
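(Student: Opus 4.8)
The plan is to bound $\P(\Gamma_{\proj,L-1}(T)\neq\varnothing)$ by a first-moment (union bound) argument over all candidate minimal paths of length $L-1$ up to time $T$, exploiting the fact that such a path is ``short'' (only $L-1 \approx cT$ steps) yet must span the long time interval $[0,T)$. The key structural point is that along a minimal path $\gamma_{L-1} = ((v_\ell,a_\ell,t_\ell))_{0\le\ell\le L-1}$ with $t_{L-1}<T \le T_+(v_{L-1};t_{L-1})$, the consecutive time increments $t_\ell - t_{\ell-1}$ (together with the final gap $T - t_{L-1}$, which is a no-update interval for $v_{L-1}$) must sum to at least $T$, and each increment is controlled by a waiting-time / no-update constraint coming from the events $\BB_\ell$, $\CC_\ell$ (and $\A_\ell$) in Definition~\ref{d:minpath}. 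Roughly: for the path not to have been shortened, $v_\ell$ is not updated during $(t_{\ell-1}', t_\ell)$ for an appropriate $t_{\ell-1}'$ close to $t_{\ell-1}$, so each gap stochastically dominates (a piece of) an exponential waiting time, while $\A_\ell$ forces an actual update at $t_\ell$. Since there are only $L-1$ such gaps but they must total $\ge T$, and each gap has an exponential tail of rate $O(1)$, the probability that $L-1 \lesssim cT$ independent-ish exponential-type increments sum to more than $T$ is exponentially small in $T$.

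First I would set up the enumeration: the number of choices for the initial site $(v_0,a_0)$ is at most $n\Delta \le n k\Delta$ (or just $O(n)$ up to the per-step branching), and at each subsequent step the choice of $a_\ell$ (a hyperedge sharing a vertex with $a_{\ell-1}$) and of $v_\ell\in\pd a_\ell$ contributes a branching factor at most $k\Delta \cdot k = k^2\Delta$; this is exactly why $c$ is chosen as $\frac{1}{4\log(2k^2\Delta)}$, so that the entropy term $(k^2\Delta)^{L-1} = e^{(L-1)\log(k^2\Delta)}$ is beaten by the exponentially small probability per path. Then, conditioning on the combinatorial skeleton $(v_\ell,a_\ell)_{0\le\ell\le L-1}$, I would bound the probability over the Poisson update sequence $\xi$ that there exist admissible times $0 = t_0 < t_1 < \dots < t_{L-1} < T$ realizing a minimal path up to time $T$. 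The cleanest way is to observe that the required time-structure implies the existence of $L-1$ update events at the vertices $v_1,\dots,v_{L-1}$, interleaved with no-update intervals, whose lengths sum to $\ge T$; since the $v_\ell$'s (with multiplicity) carry i.i.d.\ rate-one Poisson clocks, the total ``time consumed'' is dominated by a sum of $O(L)$ independent exponential random variables of rate $O(1)$, plus possibly $O(L)$ further Poisson points. A Chernoff/Cramér bound on $\P(\mathrm{Gamma}(O(L)) \ge T)$ with $L \le cT$ gives a bound of the form $e^{-cT}$ for a suitable constant; tracking constants carefully against the branching factor yields the clean statement $\P(\Gamma_{\proj,L-1}(T)\neq\varnothing) \le n\exp(-T/4)$.

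I expect the main obstacle to be making the ``sum of increments $\ge T$ implies exponentially small probability'' step rigorous in the presence of the dependencies between the $\BB_\ell$, $\CC_\ell$ events and the shared randomness when a path revisits the same hyperedge or uses overlapping $\pd a_\ell$'s. The right fix is not to seek exact independence but to find a \emph{stochastic domination}: extract from the minimal-path conditions a disjoint collection of time intervals (one per step, living on the clock of $v_\ell$) on which specified update/no-update behavior is prescribed, so that the joint probability factorizes as a product of terms each bounded by something like $\tfrac12 e^{-(t_\ell - t_{\ell-1}')}$ times a density for the update time; integrating out the free times $t_1,\dots,t_{L-1}$ subject to $\sum \ge T$ then reduces to a clean Gamma-tail computation. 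A secondary technical point is handling the final step correctly — the interval $(t_{L-1},T]$ must be update-free for $v_{L-1}$, which supplies the ``last'' exponential factor and is what lets the increments genuinely cover all of $[0,T)$ rather than just up to $t_{L-1}$.
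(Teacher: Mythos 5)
Your proposal is correct and follows the same overall first-moment strategy as the paper: enumerate the combinatorial skeleton of a putative minimal path, observe that the relative-deactivation constraints $\BB_\ell$ (plus the final update-free interval for $v_{L-1}$) force the path to pay an exponential price for spanning all of $[0,T)$, and tune $c=\tfrac{1}{4\log(2k^2\Delta)}$ so that the per-step entropy loses; the difference lies in how the per-skeleton probability is executed. The paper enumerates only the hyperedge sequence $(a_\ell)$ (branching $\Delta k$ per step, no vertices and no times), defines recursively $T_0=0$, $T_{\ell+1}=T_+(a_{\ell+1};a_\ell,T_\ell)$, notes that the existence of \emph{any} admissible times and vertices forces the single event $\{T_L\ge T\}$, and bounds this by stochastically dominating the increments by i.i.d.\ maxima of $k$ exponentials and applying a Chernoff bound at exponent $1/2$; each increment's exponential moment is at most $2k$, which is exactly where the second factor of $k$ inside $c$ is spent. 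Your route keeps the vertices and, implicitly, a per-step witness for $\BB_\ell$ in the enumeration, extracts disjoint no-update intervals tiling $[0,T]$ (total cost $e^{-T}$) together with the forced update points at the $t_\ell$'s, and integrates over the time simplex (a Gamma-tail computation). Two small corrections to your sketch: the update-free vertex guaranteed by $\BB_\ell$ is some $u\in\pd a_\ell\cap\pd a_{\ell-1}$, not necessarily $v_\ell$, so the witness costs an extra factor of order $k$ per step; and a single gap is dominated by the maximum of up to $k$ exponentials (tail $\le k e^{-t}$), not by a plain rate-$O(1)$ exponential. Neither is fatal: your exponential gain is $e^{-T}$ rather than the paper's $e^{-T/2}$, the bound is vacuous anyway for $T\le 4\log n$, and one checks that $c\,[\log(k^3\Delta)+O(\log\log(k\Delta))]\le 3/4$ holds comfortably, so your accounting still closes with the stated constant. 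In short, both arguments buy the same estimate; the paper's reduction to $\{T_L\ge T\}$ is the slicker bookkeeping device (no union bound over vertices, witnesses or times), while yours is more hands-on and its validity rests precisely on the disjoint-prescribed-intervals factorization you describe, which does go through because the no-update intervals occupy disjoint time windows even when vertex clocks are reused.
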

\begin{proof}
Consider constructing a path in $\Gamma_{\proj,L-1}  (T)$ by first choosing the locations of ``jumps" and then picking their times. The number of ways of choosing a sequence $\vec{w} \equiv (a_\ell)_{0\le\ell\le L-1}$ such that  $\pd a_{\ell}\cap \pd a_{\ell-1}\neq\varnothing$  for all $1\le \ell\le L-1$, is at most $n (\Delta k)^{L}$.
For each fixed $\vec w$, recursively define $T_0 \equiv 0$ and $T_{\ell+1} \equiv T_+(a_{\ell+1};a_{\ell  }, T_{\ell})$ for $0\le \ell\le L-2$. If there exists a sequences of times $\vec s \equiv (s_\ell)_{0\le \ell\le L-1}$ and vertices $\vec v \equiv(v_\ell)_{0 \le \ell \le L-1}$ such that the path $\gamma_{L-1} \equiv ((v_\ell,a_\ell,s_\ell))_{0\le\ell\le L-1}$ is a minimal path up to time $T$, then $s_0=T_0=0$, and one can  show inductively that
   \[
       s_\ell \le T_+(a_{\ell};a_{\ell - 1},s_{\ell-1})
   \le T_+(a_{\ell};a_{\ell - 1},T_{\ell-1}) = T_\ell,
   \]
   using the induction step ($s_{\ell-1} \le T_{\ell-1}$) and the monotonicity of  $T_+(a;b,t)$ in $t$. The existence of $\vec s$ further implies that
\[
    T_{L}\equiv  T_+(a_{L-1};a_{L-1}, T_{L-1})
    \ge T_+(v_{L-1};s_{L-1}) \ge T.
\]

   By construction, the joint law of $(T_{\ell} - T_{\ell-1})_{0 \le \ell \le L}$ is stochastically dominated by that of $(Z_{\ell})_{0 \le \ell \le L}$ where $Z_{\ell}:=\max(Z_{\ell,1},\ldots,Z_{\ell,k})$ and $(Z_{\ell,i})_{1 \le i \le k,1 \le \ell \le L}$ are i.i.d.~exponential random variables with rate $1$. Note that using the order statistic of $Z_{\ell,1},\ldots,Z_{\ell,k} $, we can decompose $Z_{\ell}$ into a sum $\sum_{i=1}^{k}J_{\ell,i}$ of independent exponential r.v.'s with $\mathbb{E}[J_{\ell,i}]=i^{-1}$. Hence for all $\lambda \in (0,1/2) $ and $\ell \le L $,
\[
\mathbb{E}[e^{\lambda Z_{\ell}}]
= \prod_{i=1}^k \mathbb{E}[e^{\lambda J_{\ell,i}}]
=  \prod_{i=1}^k
\Big(1+\frac{\lambda}{i-\lambda}\Big)
\le \prod_{i=1}^k \Big(1+\frac{1}{i}\Big)
\le 2e^{\frac{1}{2}+\cdots+\frac{1}{k}}
\le e^{\log (2k)}
.
\]
   By Markov's inequality, independence of $Z_{\ell}$'s and the aforementioned stochastic domination,
\[
    \P(T_{L}\ge T)
    \le \mathbb{E}[
    e^{(T_L-T)/2}]
    =
    e^{-T/2} e^{L\log (2k)}
    \le e^{-[1-2c \log (2k) ]T/2}.
\]
Therefore by the arguments above together with the choice $c= \frac{1}{4 \log (2k^{2} \Delta)}$,
\[
    \P( \Gamma_{\proj,L-1}  (T)\neq \varnothing )
    \le n(\Delta k)^{cT} \P(T_L\ge T)
    \le n(\Delta k)^{cT} e^{-[1-2c \log (2k) ]T/2} = n e^{-T/4}
    ,
\]
as desired.
\end{proof}

\subsection{Redacted path}
 In the \bpp remainder \ehl of the section, we bound the size of $\Gamma_{\min,L}$. Our basic approach is to bound for each $\gamma_L\in\Gamma_{\min,L}$ the expected number of ways to extend $\gamma_L$ by two steps. However, the ``vanilla-version" of this expectation can be much bigger than 1. Intuitively, for general hypergraphs, hyperedges may be highly overlapping with each other. Thus when one hyperedge is all $1$ in process $Y_t$, its neighbouring hyperedges may also be all $1$ with little extra cost. This phenomenon leads to numerous local ``tangles'' where multiple choices of the immediate next step exist for the same second-next step, blowing up the number of extensions significantly.

To overcome the aforementioned obstacle, we divide the steps of a minimal path into good branchings (i.e.,\ small overlaps) and bad branchings (i.e.,\ large overlaps) and skip the ``tangles'' by ignoring the first step of a bad branching and recording only the ``key'' step instead.
More precisely, given a path $\gamma_L=\set{(u_\ell, b_\ell,s_\ell)}_{0\le \ell \le L}\in\Gamma_L$, we classify each of the steps $1\le \ell \le L$ into one of the following three cases: (see also Figure~\ref{fig:threebranchings})
\begin{figure}[t]
\begin{center}
    \includegraphics{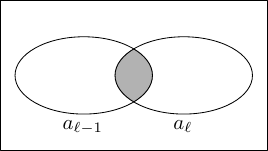}
    \includegraphics{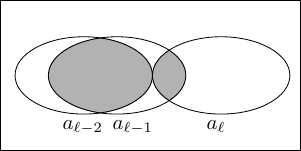}
    \includegraphics{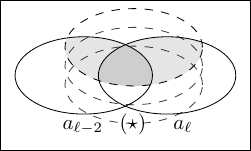}
    \caption{Good/Type-I/Type-II branching in redacted paths}\label{fig:threebranchings}
\end{center}
\end{figure}
\begin{enumerate}
    \item We say that $(v_\ell,a_\ell,t_\ell)$ is a good branching if $|\pd a_{\ell-1} \cap \pd a_{\ell}| \le k/3$.
    \item We say that $\set{(v_{\ell-1},a_{\ell-1},t_{\ell-1}), (v_\ell,a_\ell,t_\ell)}$ is a type-I (bad) branching if
\[
|\pd a_{\ell-2} \cap \pd a_{\ell-1}| > k/3
, \quad
\pd a_{\ell-2} \cap \pd a_{\ell} = \varnothing
.
\]
\item We say that $\set{(v_{\ell-1},a_{\ell-1},t_{\ell-1}), (v_\ell,a_\ell,t_\ell)}$ is a type-II (bad) branching if
\[
|\pd a_{\ell-2} \cap \pd a_{\ell-1}| > k/3
, \quad
\pd a_{\ell-2} \cap \pd a_{\ell} \ne \varnothing
.
\]
\end{enumerate}
For each $\gamma_L=((v_\ell, a_\ell,t_\ell))_{0\le \ell \le L}\in\Gamma_L$, we use the following greedy algorithm to partition $\gamma_L$ into disjoint segments of size one or two such that all but (possibly) the last block satisfy one of the above three types of branchings (we assume $L>1$ to avoid triviality).
\begin{enumerate}[\quad 1.]
\item
    Observe that (by Definition \ref{def:path}) $a_1 = a_0$ implies that $\pd a_2 \cap \pd a_0\neq\varnothing$. Therefore $\set{(v_1,a_1,t_1),(v_2,a_2,t_2)}$ forms a type-II branching and is taken as a block in the partition.
\item
For each $\ell\le L-2$,
if after a certain number of iterations we have partitioned the first $\ell$ steps of $\gamma_L$ into one of the above three cases, then we look at the $(\ell+1)$'th step $(v_{\ell+1}, a_{\ell+1},t_{\ell+1})$:
\begin{itemize}
    \item If $(v_{\ell+1}, a_{\ell+1},t_{\ell+1})$ forms a good branching then we take $\set{(v_{\ell+1}, a_{\ell+1},t_{\ell+1})}$ as a block of size 1 in the partition.
    \item
Otherwise, we take $\{(v_{\ell+1}, a_{\ell+1},t_{\ell+1}),(v_{\ell+2}, a_{\ell+2},t_{\ell+2})\} $ as a segment of size~2 in the partition. By definition, it must either form a type-I branching or a type-II branching.
\end{itemize}
\item If step 2  ends with a complete partition of the path (i.e.,~the last two steps form a type-I or type-II branching), then we are done. Otherwise, it must give a partition of the first $L-1$ steps of $\gamma_L$. To  obtain a partition of the first $L$ steps we then let $(v_L,a_L,t_L)$ form a block of size 1, and call it a \emph{half-branching} if it is not a good-branching itself.
\end{enumerate}
Given the unique partition defined above, we define $\Ig, I_\textup{1}, I_\star, I_\textup{2} \subseteq \set{1,\dots, L}\equiv [L]$, each as a function of $\gamma_L\in\Gamma_L$, as the set of (the indices of) steps that belong to good branchings, type-I branchings, the first step of type-II branchings and the second step of type-II branchings, respectively, in the partition of $\gamma_L$ by the above procedure. The four sets $\Ig, I_\textup{1}, I_\star, I_\textup{2}$ form a partition of $[L]$ or $[L-1]$. In particular, $1 \in I_{\star}$, $2 \in I_2$ and $L$ belongs to none of the four sets if the last step is a half-branching.

\begin{dfn}
    Given a sequence $\tgm_L\equiv \set{(v_\ell, a_\ell,t_\ell)}_{0\le \ell \le L}$, we say that $\tgm_L$ is a \emph{redacted path} in $\tG$ of length $L$ if there exists path $\gamma_L=\set{(u_\ell, b_\ell,s_\ell)}_{0\le \ell \le L}\in\Gamma_L$ such that for each $0\le \ell\le L$,
\begin{equation}
    (v_{\ell},a_{\ell},t_{\ell})
    =
    \begin{cases}
    (u_{\ell}, b_{\ell}, t_{\ell}),
    &
    \ell \in \set{0,L} \cup \Ig(\gamma_L) \cup I_1(\gamma_L) \cup I_2(\gamma_L)
    \\
    (\star,\star,\star),
    &
    \ell \in I_\star(\gamma_L)
    \end{cases}
    .
    \label{e:gmtotgm}
\end{equation}
We will refer to steps equaling to $(\star,\star,\star)$ as \emph{redacted steps}.

With some abuse of notation, we define $\Ig\equiv \Ig(\tgm_L)\equiv \Ig(\gamma_L)$ and define $I_1,I_2,I_\star$ similarly. 
Here we remark that for redacted path $\tgm_L$, the set $I_\star$ (resp.~$I_2$) can be defined as the collection of redacted steps (resp.~the collection of steps following a redacted step) and thus does not depend on the choice of $\gamma_L$.
Let $  M(\tgm_L) \equiv  |\Ig|+\frac{1}{2} |I_1| +|I_2|$ be the \emph{branching length} of $\tgm_L$, where the factor $\frac{1}{2}$ is taken because we want each type-I branching to contribute $+1$ to $M(\tgm_L)$.
We define
\[
    \begin{array}{ll}
        \tGm_M \hspace{-6pt}&\equiv
    \set{\tgm_L\in \tGm:
    M(\tgm_L) = M, \quad L\in \Ig \cup I_1\cup I_2}
    ,
    \\
    \tGm_{M+1/2} \hspace{-6pt}&\equiv
    \set{\tgm_L\in \tGm:
    M(\tgm_L) = M, \quad L\notin \Ig \cup I_1\cup I_2}
    ,
\end{array}
\]
to be the set of redacted paths of branching length $M$ that end with regular branchings or half-branchings respectively.
We finally denote the set of all redacted paths by $\tGm$.
\end{dfn}

In the definition above, redacted steps, i.e.,\ steps equaling to $(\star,\star,\star)$, can never appear twice in a row, and the  last step of $\tgm_L$ is never redacted.
Let
\[
    \Im \equiv \Im(\tgm_L)
    \equiv \set{\ell: \ell,\ell-1\notin I_\star(\tgm_L)}
    =\begin{cases}
        \Ig \cup I_1 & L\in I_2\\
        \Ig \cup I_1 \cup \set{L} & \textup{otherwise}
    \end{cases}
    .
\]
Using the convention that  $T_+(v;\star,\star) = T_+(a;\star,\star) = 0$ for all $v\in V$ and $a\in F$, we can extend the definition of a minimal step to redacted paths.
\begin{dfn}
We say that $(v_\ell,a_\ell,t_\ell)$ is a \emph{minimal step} in $\tgm_L= ((v_i, a_i,t_i))_{0\le i \le L}$ if and only if $\ell \in \Im$ and $(v_\ell,a_\ell,t_\ell)$ satisfies the six events of \eqref{e:minevents}.
For  $\ell\in I_2$ we say that $(v_{\ell},a_{\ell}, t_{\ell})$ is a \emph{minimal type-II step}, if all of the following events hold:
\begin{equation}
    \begin{array}{l}
    \A_{\ell}
    \equiv
    \set{(v_{\ell},t_{\ell},1)\in \xi}
    ,\quad
    \widetilde{\BB}_{\ell}
    \equiv
    \set{t_{\ell} \le
        T_+(a_{\ell};T_+(a_{\ell-2};t_{\ell-2}))}
    ,\smallskip\\
    \CC_{\ell}
    \equiv
    \set{t_\ell > T_+(a_{\ell}; a_{\ell}, 0)}
    \cap
    (\cap_{r=1}^{\ell-2}
    \set{t_\ell > T_+(a_\ell;a_{r},t_{r})})
    ,
    \smallskip\\
    \widetilde{\DD}_{\ell} \equiv
    \set{Y_{t_{\ell}}
        (\pd a_{\ell} \setminus \set{v_{\ell}})
        = \vec 1}
    .
\end{array}
\label{e:typeIIevents}
\end{equation}
\end{dfn}
\begin{dfn}
    Fix an update sequence $\xi$ and a redacted path $\tgm_{L}= ((v_\ell, a_\ell,t_\ell))_{0\le \ell \le L}\in \tGm$. We say that $\tgm_L\in\tGm$ is a \emph{minimal redacted path} if,
\begin{enumerate}
    \item For each $\ell\in \Im$, $(v_{\ell},a_{\ell}, t_{\ell})$ is a minimal step of $\tgm_L$.
    \item For each $\ell\in I_2$, $(v_{\ell},a_{\ell}, t_{\ell})$ is a minimal type-II step of $\tgm_L$.
    \end{enumerate}
    We denote the set of minimal redacted paths by $\tGm_{\min}$ and define $\tGm_{\min,M}\equiv\{\tilde \gamma \in\tGm_{\min}:\tgm \text{ has branching length }M  \}$ where $M$ takes value from  $\frac{1}{2}\N$.
\end{dfn}
\begin{lem} \label{l:mintored}
    For every update sequence $\xi$ and length $L\ge 0$, if $\Gamma_{\min,L}\neq \varnothing$, then $\tGm_{\min,M}\cup \tGm_{\min,M+1/2} \neq \varnothing$ where $M= \lfloor (L-1)/2\rfloor$.
\end{lem}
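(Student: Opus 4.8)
The plan is to take a minimal path $\gamma_L = ((v_\ell, a_\ell, t_\ell))_{0\le\ell\le L} \in \Gamma_{\min,L}$ and apply to it the greedy partition algorithm described immediately before the lemma, which divides the steps $1,\dots,L$ (or $1,\dots,L-1$) into blocks that are good branchings, type-I branchings or type-II branchings, plus possibly a final half-branching. The redacted path $\tgm_L$ is then obtained from $\gamma_L$ via \eqref{e:gmtotgm}, replacing the first step of each type-II block by $(\star,\star,\star)$ and keeping all other steps unchanged. One must check that $\tgm_L$ lands in the right one of $\tGm_{\min,M}$ or $\tGm_{\min,M+1/2}$ for $M = \lfloor (L-1)/2 \rfloor$, and that it is in fact a \emph{minimal} redacted path, i.e.\ the minimal-step conditions \eqref{e:minevents} hold for each $\ell\in\Im$ and the minimal type-II conditions \eqref{e:typeIIevents} hold for each $\ell\in I_2$.

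The branching-length bookkeeping is the first thing to pin down. Each good branching and each type-II block contributes $1$ to $M(\tgm_L) = |\Ig| + \tfrac12|I_1| + |I_2|$, while each type-I block (two steps, both in $I_1$) also contributes $1$; a trailing half-branching contributes $0$. Since the algorithm partitions either all of $[L]$ or all of $[L-1]$ into such blocks, and consecutive steps collapse roughly two-to-one (a good branching is one step, the bad ones two steps), one verifies that the number of blocks is $\lfloor (L-1)/2\rfloor$ or $\lceil (L-1)/2 \rceil$ depending on parity and on whether there is a half-branching; carefully matching cases gives $M = \lfloor(L-1)/2\rfloor$ with the endpoint step $L$ either belonging to $\Ig\cup I_1\cup I_2$ (landing in $\tGm_{\min,M}$) or being a half-branching (landing in $\tGm_{\min,M+1/2}$). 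This is the ``main obstacle'' only in the sense that it is the most error-prone part: one has to chase the greedy algorithm through its termination cases (step 3 of the partition procedure) and confirm the count is exactly $\lfloor(L-1)/2\rfloor$ in each, rather than off by one.

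The second ingredient is to show the minimality conditions survive redaction. For indices $\ell\in\Im$ (i.e.\ $\ell$ and $\ell-1$ both outside $I_\star$), the step $(v_\ell,a_\ell,t_\ell)$ is literally copied from $\gamma_L$, and since $\gamma_L$ is a minimal path the six events $\A_\ell,\BB_\ell,\CC_\ell,\DD^1_\ell,\DD^2_\ell,\EE_\ell$ of \eqref{e:minevents} already hold for $\gamma_L$; the only subtlety is that $\CC_\ell$ and $\BB_\ell$ reference the previous step $a_{\ell-1}$, which is unchanged because $\ell-1\notin I_\star$, so the events are identical. For $\ell\in I_2$, the predecessor $a_{\ell-1}$ has been redacted, so we must check the \emph{type-II} minimality events \eqref{e:typeIIevents}: here $\widetilde\BB_\ell$ with the nested deactivation time $T_+(a_\ell;T_+(a_{\ell-2};t_{\ell-2}))$ must follow from the minimal-step bounds $\BB_{\ell-1}$ and $\BB_\ell$ of $\gamma_L$ together with the fact (used in the definition of a type-II block) that $\pd a_{\ell-2}\cap\pd a_\ell\ne\varnothing$, chaining the two deactivation-time inequalities; $\widetilde\DD_\ell = \{Y_{t_\ell}(\pd a_\ell\setminus\{v_\ell\}) = \vec 1\}$ follows by combining $\DD^1_\ell$ and $\DD^2_\ell$ (their union of constraint sets is all of $\pd a_\ell\setminus\{v_\ell\}$); and $\A_\ell,\CC_\ell$ are again unchanged.

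Finally I would note that $\tgm_L$ is a bona fide redacted path — it satisfies \eqref{e:gmtotgm} with witness $\gamma_L$ by construction, its last step is not redacted (a redacted step is always the first of a type-II pair, never the last step, since step $L$ is its own block), and redacted steps never occur consecutively (type-II blocks are disjoint). Hence $\tgm_L\in\tGm_{\min,M}\cup\tGm_{\min,M+1/2}$ with $M=\lfloor(L-1)/2\rfloor$, and since $\Gamma_{\min,L}\ne\varnothing$ produced $\tgm_L$, one of these two sets is nonempty, proving the lemma. I expect the only real work to be the case analysis tying the greedy partition's termination to the exact value of $M$ and the verification of $\widetilde\BB_\ell$ via the two-hop deactivation-time chaining.
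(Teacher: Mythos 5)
Your overall route is the same as the paper's: redact the minimal path via \eqref{e:gmtotgm}, observe that steps in $\Im$ are copied verbatim so the six events of \eqref{e:minevents} persist, and for $\ell\in I_2$ verify $\widetilde\BB_\ell$ by chaining $\BB_{\ell-1}$ and $\BB_\ell$ through the monotonicity of $T_+(a;t)$ (this is exactly the computation in the paper), with $\widetilde\DD_\ell$ coming from $\DD^1_\ell\cap\DD^2_\ell$. That part is fine.

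The genuine gap is in your branching-length bookkeeping. You claim that ``carefully matching cases gives $M(\tgm_L)=\lfloor(L-1)/2\rfloor$'' for the full redacted path, but this is false: a good branching is a \emph{single} step contributing $+1$ to $M(\tgm_L)=|\Ig|+\tfrac12|I_1|+|I_2|$, so good branchings do not collapse two-to-one. For example, if after the initial type-II block all remaining steps are good branchings, then $M(\tgm_L)=1+(L-2)=L-1$, which exceeds $\lfloor(L-1)/2\rfloor$ for every $L\ge 3$; in general $L$ only forces the one-sided bound $M(\tgm_L)\ge (L-1)/2$ (each block has at most two steps and contributes exactly $1$, except a trailing half-branching). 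Since $\tGm_{\min,M}\cup\tGm_{\min,M+1/2}$ is defined at the \emph{exact} level $M=\lfloor(L-1)/2\rfloor$, your constructed path generally lies at a strictly larger branching length and does not witness the claimed nonemptiness. The missing step, which is how the paper closes the argument, is to \emph{truncate} the redacted path: cut it at the point where the accumulated branching length first equals $M$, noting that such an initial segment is still a minimal redacted path (its steps keep their classification, and if the cut falls after the first step of a two-step block that final step becomes a half-branching, which is precisely why the target set includes $\tGm_{\min,M+1/2}$). With that truncation inserted, your argument matches the paper's proof.
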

\begin{proof}
For each $\gamma_L=((u_\ell,b_\ell,s_\ell))_{0\le \ell \le L}\in \Gamma_{\min,L}$, we can construct $\tgm_L=((v_\ell,a_\ell,t_\ell))_{0\le\ell\le L}$ according to \eqref{e:gmtotgm}. It is straight forward to check that $\tgm_L$ is a redacted path with branching length $M(\tgm_L) \ge (L-1)/2$ and for all $\ell\in \Im$ we have that $(v_\ell,a_\ell,t_\ell)$ is a minimal step of $\tgm_L$. We now verify that for all $\ell\in I_2$ we have that $(v_\ell,a_\ell,t_\ell)$ is a minimal type-II step of $\tgm_L$. To do so, it suffices to verify that if $\ell \in I_2$ then $\widetilde{\BB}_\ell$ holds (the occurrence of $\A_{\ell},\BB_\ell$ and $\widetilde{\DD}_{\ell} $ follows from the fact that $\gamma_L \in \Gamma_{\min,L}  $). Fix some $\gamma_L\in \Gamma_{\min, L}$ and $\ell \in I_2$.  From \eqref{e:minevents} we know that events $\BB_{\ell-1}$ and $\BB_{\ell}$ hold. It follows that
\begin{align*}
    \begin{array}{lll}
    t_{\ell-1}
    \hspace{-6pt}&\le T_+(a_{\ell-1};a_{\ell-2},t_{\ell-2})
    \hspace{-6pt}&\le T_+(a_{\ell-2};t_{\ell-2})
    ,
    \\t_{\ell}
    \hspace{-6pt}&\le T_+(a_{\ell};a_{\ell-1},t_{\ell-1})
    \hspace{-6pt}&\le T_+(a_{\ell};t_{\ell-1})
    .
    \end{array}
\end{align*}
Combining the last two equations and using the monotonicity of $T_+(a;t)$ in $t$ yields that
\[
    t_{\ell}\le T_+(a_{\ell};T_+(a_{\ell-2};t_{\ell-2})).
\]
Consequently, $\tgm_L\in \tGm_{\min}$. Truncating $\tgm_L$ such that $M= \lfloor (L-1)/2\rfloor$ concludes the proof.
\end{proof}

\subsection{Recursion of two steps}
In this subsection we complete the recursion on $\tGm_{\min,M}$ and conclude the proof of Theorem \ref{t:mixing}. The main idea is to bound the expected number  of ways of extending a redacted path by each one of the three types of branchings. Since every step of a good branching or a type-I branching is also a minimal step, we split the discussion according to minimal steps and minimal type-II steps.
For each $a\in F$, let $\bN(a) \equiv \set{b\in F: \pd a \cap \pd b \neq \varnothing}$ be the hyperedge-neighbourhood of $a$.
Define
\[
     \bN^> (a)
     \equiv \set{b\in \bN(a):
     |\pd a \cap \pd b| > k/3}
     ,\quad
     \bN^\le (a) \equiv \bN(a) \setminus \bN^> (a)
     .
 \]
 Throughout the section, we will use $\tgm_L$ to denote the redacted paths of length $L$ (but varying branching length).

Fix a path $\tgm_L=((v_{\ell},a_{\ell},t_{\ell}) )_{0 \le \ell \le L }\in \tGm$ and a vertex-hyperedge pair $(v,a)$ satisfying $a\in \bN(a_L)$, $v\in \pd a$. We denote by $\tgm^\tm_{L+1}(v,a,t)=((v_{\ell},a_{\ell},t_{\ell}) )_{0 \le \ell \le L+1 }$  the redacted path extended from $\tgm_L$ with $(v_{L+1},a_{L+1},t_{L+1}) \equiv (v,a,t)$. We define
\begin{equation}
\label{e:NtmL}
N^\tm_{L} \equiv
N^\tm(\tgm_{L};v,a)
    \equiv |\set{t:\tgm^\tm_{L+1}(v,a,t) \in \tGm_{\min},\,
    L+1 \in \Im (\tgm^\tm_{L+1}(v,a,t))}|
\end{equation}
to count the number of possible minimal steps using $(v,a)$.
Note that $N^\tm_{L}$ is a.s.\ finite since it is  bounded from above by the number of updates at $v$ between time $t_{L}$ and $T_+(a_{L+1};a_{L},t_{L})$, which in turn is a.s.\ finite.
We further use $\tgm_{L+2}^\textup{II}(v,a,t)=((v_{\ell},a_{\ell},t_{\ell}) )_{0 \le \ell \le L+2 }$ to denote the redacted path extended from $\tgm_L$ with
\[(v_{L+1},a_{L+1},t_{L+1}) \equiv (\star,\star,\star),\quad (v_{L+2},a_{L+2},t_{L+2}) \equiv (v,a,t)\]
and define
\[
    N^\tII_{L}\equiv
    N^\tII(\tgm_{L};v,a)
    \equiv |\set{t:\gamma^\tII_{L+2}(v,a,t) \in \tGm_{\min,M+1}
        ,\  L+2 \in I_2(\tgm_{L+2}^\tII(v,a,t))
    }|
.
\]
Finally, for each integer $M\ge M(\tgm_L)$, we let $\tilde \Gamma_{\min,M}(\tgm_L)$ be the collection of redacted paths in $\tGm_{\min,M}$ that agree with $\tgm_L$ in their first $L$ steps and write $N_{\min,M}(\tgm_L):=|\tilde \Gamma_{\min,M}(\tgm_L)| $.
\begin{lem}\label{l:Eminbranch}
For every integer $L \ge 1$, $\tgm_L\in\tGm$,  $a\in \bN(a_L)$ and $v\in \pd a$,
\[
    \E[ N^\tm(\tgm_L;v,a) \mid \tgm_L\in\tGm_{\min}]
    \le {C_\tm}
    \begin{cases}
        \frac{1}{k} & a = a_{L}
        , \\
        \frac{1}{m_{L+1}+1} 2^{-(k-1-m_{L+1})}
        & a \neq a_{L}, v \notin a_{L}
        , \\
        \frac{k-m_{L+1}}{k}\frac{1}{m_{L+1}+1 }
        2^{-(k-1-m_{L+1} )}
        & a \neq a_{L}, v \in a_{L}
        ,
    \end{cases}
\]
where $m_{L+1} \equiv |\pd a \cap \pd a_L\setminus\set{v}|$ and $C_\tm$ is an absolute constant independent of $\Delta,k$.
\end{lem}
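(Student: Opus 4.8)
The plan is to condition on the update history up to time $t_L$ and to exploit the lack of memory of the Poisson clocks after that time, in such a way that all three cases come out of a single computation with different parameters. First I would observe that $\{\tgm_L\in\tGm_{\min}\}$ is measurable with respect to $\FF_{t_L}$, the $\sigma$-algebra generated by the restriction of $\xi$ to $V\times(0,t_L]$: every event in \eqref{e:minevents} and \eqref{e:typeIIevents} indexed by some $\ell\le L$ constrains only the updates before $t_L$ (using $t_\ell\le t_L$ and the monotonicity of $T_+(\,\cdot\,;t)$ in $t$ when a relative deactivation time is compared with $t_L$). Two consequences are essential: on this event $\CC_L$ forces every vertex of $\pd a_L$ to be updated during $(0,t_L)$, while $\A_L$ together with $\DD_L$ (or $\widetilde{\DD}_L$ when $L\in I_2$) forces $Y_s(u)=1$ for all $u\in\pd a_L$ and all $s$ slightly larger than $t_L$; and conditionally on $\FF_{t_L}$ the marked Poisson process on $(t_L,\infty)$ is fresh and independent of the past.

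For each update $(v,t,1)$ of $v$ after $t_L$, that $(v,a,t)$ be admissible as an $(L+1)$st minimal step forces the events $\BB_{L+1},\CC_{L+1},\DD^1_{L+1},\DD^2_{L+1},\EE_{L+1}$ of \eqref{e:minevents}, which depend only on the updates of $\pd a\cup\pd a_L$; since the updates $(v,s,1)$ form a rate-$\tfrac12$ Poisson process independent of the remaining clocks, Mecke's formula gives
\[
    \E\bigl[N^\tm(\tgm_L;v,a)\mid\FF_{t_L}\bigr]\le\tfrac12\int_{t_L}^{\infty}\P\bigl(\BB_{L+1}\cap\CC_{L+1}\cap\DD^1_{L+1}\cap\DD^2_{L+1}\cap\EE_{L+1}\text{ at }(v,a,t)\bigm|\FF_{t_L}\bigr)\,dt .
\]
Writing $A\equiv(\pd a\cap\pd a_L)\setminus\{v\}$ (with $|A|=m_{L+1}$) and $B\equiv\pd a\setminus(\pd a_L\cup\{v\})$ (with $|B|=k-1-m_{L+1}$), one has $\DD^1_{L+1}=\{Y_t(B)=\vec1\}$, and the $\CC_{L+1}$-constraints make each $Y_t(u)$, $u\in B$, a fair coin: if $u$ lies on no $\pd a_r$ then $\{t>T_+(a;0)\}$ forces $u$ to have been updated in $(0,t)$, while if $u\in\pd a_r$ for some $r\le L-1$ then $\{t>T_+(a;a_r,t_r)\}$ forces an update of $u$ in $(t_r,t)$, after the last time $Y(u)$ is pinned by the conditioning; hence $\DD^1_{L+1}$ contributes the factor $2^{-(k-1-m_{L+1})}$ exactly, independently of $\BB_{L+1},\DD^2_{L+1},\EE_{L+1}$.

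It then remains to estimate the integral of $\P(\BB_{L+1}\cap\DD^2_{L+1}\cap\EE_{L+1}\mid\FF_{t_L})$. Since $Y_{t_L}(u)=1$ for $u\in A$, for $t>t_L$ the constraint $\{Y_t(u)=1\}$ costs a factor $\tfrac12$ precisely when $u$ has been updated during $(t_L,t)$, and $\BB_{L+1}$ confines $t$ to $(t_L,\max_{u\in A}T_+(u;t_L)]$ (with $T_+(v;t_L)$ also inside the maximum when $v\in\pd a_L$). Conditioning on the first-update times $T_+(u;t_L)$, $u\in A$, which are i.i.d.\ $\mathrm{Exp}(1)$, and summing over which vertices of $A$ are already updated by time $t$, one computes, with $p=e^{-(t-t_L)}$,
\[
    \P\bigl(t\le\max_{u\in A}T_+(u;t_L),\,Y_t(A)=\vec1\bigm|\FF_{t_L}\bigr)=\Bigl(\tfrac{1+p}{2}\Bigr)^{m_{L+1}}-\Bigl(\tfrac{1-p}{2}\Bigr)^{m_{L+1}},
\]
whence, substituting $p=e^{-(t-t_L)}$ and using $(1+p)^m-(1-p)^m=2\sum_{i\ \textup{odd}}\binom{m}{i}p^i$ together with the elementary bound $\tfrac1i\le\tfrac{2}{i+1}$,
\[
    \int_{t_L}^{\infty}\Bigl[\Bigl(\tfrac{1+p}{2}\Bigr)^{m}-\Bigl(\tfrac{1-p}{2}\Bigr)^{m}\Bigr]dt=2^{-m+1}\sum_{i\ \textup{odd},\,i\le m}\frac{\binom{m}{i}}{i}\le\frac{4}{m+1}.
\]
With $m=m_{L+1}$ this gives the case $a\ne a_L$, $v\notin\pd a_L$ (bound $\tfrac{2}{m_{L+1}+1}2^{-(k-1-m_{L+1})}$); the case $a=a_L$ is the same computation with $m_{L+1}=k-1$ and $B=\varnothing$, the maximum in $\BB_{L+1}$ now running over all $k$ clocks of $\pd a_L$ (which merely adds the harmless term $\int_0^1(1-p)^{k-1}\,dp=\tfrac1k$), giving $C_\tm/k$; and in the case $a\ne a_L$, $v\in\pd a_L$ the maximum runs over the $m_{L+1}+1$ clocks $A\cup\{v\}$ and the integrand is additionally multiplied by $\P(\EE_{L+1})=1-(\tfrac{1+p}{2})^{k-1-m_{L+1}}$, which suppresses small $t-t_L$ and, after the same manipulation, produces the extra factor $\tfrac{k-m_{L+1}}{k}$.

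The main obstacle is the third paragraph, specifically the penultimate display: a crude estimate — bounding $\P(Y_t(A)=\vec1)$ by $(\tfrac12+\tfrac12 e^{-(t-t_L)})^{m}$ and $\P(t\le\max_{u\in A}T_+(u;t_L))$ separately — yields only an $O(1)$ constant, losing the decisive factor $\tfrac1{m+1}$; one must keep the correlation between the window endpoint and the all-ones cost, both of which are driven by the \emph{same} updates of $A$ (conditioning on the order statistics of the first-update times makes this transparent). A subsidiary technical point is the claim of the second paragraph: one must check, case by case according to whether a vertex of $B$ lies on the path, that the $\CC_{L+1}$-constraints genuinely refresh it, so that the factor $2^{-(k-1-m_{L+1})}$ is exact and independent of the remaining events.
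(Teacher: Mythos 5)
The crux of the lemma is not where you place it. Your plan is to condition on $\FF_{t_L}$, the full update history up to time $t_L$ (and it is true that $\{\tgm_L\in\tGm_{\min}\}$ is measurable with respect to it), and then to claim that the $\CC_{L+1}$-constraints refresh every $u\in B\equiv\pd a\cap\pd a_L^{c}\setminus\set{v}$ ``after the last time $Y(u)$ is pinned by the conditioning,'' so that $\DD^1_{L+1}$ costs exactly $2^{-|B|}$ independently of $\BB_{L+1},\DD^2_{L+1},\EE_{L+1}$. This step fails as stated, and it is precisely the main technical obstacle of the paper's argument, not a subsidiary check. First, the refresh of $u$ demanded by $\CC_{L+1}(t)$ is an update somewhere in $(t_-(u),t)$, where $t_-(u)\le t_L$ is the time of the last step of $\tgm_L$ whose hyperedge contains $u$ (time $0$ if there is none); this update may occur \emph{before} $t_L$, i.e.\ inside the window you have conditioned on. On such realizations $Y_{t_L}(u)$ is $\FF_{t_L}$-measurable and may well equal $1$, so $\P(Y_t(B)=\vec 1\mid \FF_{t_L})$ can be close to $1$ for $t$ near $t_L$; pointwise conditioning on the full past cannot produce the factor $2^{-(k-1-m_{L+1})}$ at all. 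Second, if instead you leave the updates of $u$ on $(t_-(u),t_L]$ unconditioned — which is what the paper does by working with the sigma-field $\FF_{L+1}$ generated by everything \emph{except} the updates of $\pd a\cup\pd a_L$ after their last appearance — then you must confront the fact that the conditioning event $\NN_L=\set{\tgm_L\in\tGm_{\min}}$ itself depends on exactly those updates: the event $\EE_{\ell}$ with $\ell-1=\ell_-(u)$ constrains $Y_{t_\ell}(u)$ through a not-all-ones requirement, and $\widetilde{\BB}_{\ell}$ with $\ell-2=\ell_-(u)$, $\ell\in I_2$, constrains the unmarked update times of $u$ on $(t_{\ell-2},t_\ell]$. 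Hence $Y_t(u)$ is genuinely correlated with $\NN_L$, and the bound $2^{-|B|}$ survives only because these constraints push in the favourable direction; making that one-sided comparison rigorous — the splitting of each $\EE_\ell$ into an $\FF_{L+1}$-measurable part $\EE^0_\ell$ and a part $\EE^1_\ell$ handled by conditioning, the supremum over realizations of $\xi^\circ$ on the sets $\tilde W_\ell$, and the monotonicity step that discards the conditioning $Y_{t_\ell}(W_\ell)\neq\vec 1$ — is the content of Lemma~\ref{l:pathrecur1}, for which your proposal offers no substitute. Also, the factor is an upper bound, not ``exact.''

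By contrast, the second half of your computation is sound and is in fact a pleasant simplification of the paper's route: given $Y_{t_L}(\pd a_L)=\vec 1$, the exact identity $\P(t\le\max_{u\in A}T_+(u;t_L),\,Y_t(A)=\vec1)=\bigl(\tfrac{1+p}{2}\bigr)^{m}-\bigl(\tfrac{1-p}{2}\bigr)^{m}$ with $p=e^{-(t-t_L)}$, and the bound $2^{-m+1}\sum_{i\ \mathrm{odd}}\binom{m}{i}\tfrac1i\le\tfrac{4}{m+1}$, correctly reproduce the paper's estimate in the case $v\notin\pd a_L$, where the paper instead passes to the skeleton chain and invokes the hypercube visit bound of Lemma~\ref{l:LSRW}; the cases $a=a_L$ and $v\in\pd a_L$ are only sketched in your proposal but appear amenable to the same calculation. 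So if you graft Lemma~\ref{l:pathrecur1} (or an equivalent positive-correlation argument) in place of your second paragraph, your integral computation would give a legitimate alternative to Lemma~\ref{l:LSRW}; without it, the proof has a genuine gap.
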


\begin{lem}\label{l:EtypeII}
For every integers $L\ge M\ge 1$, $\tgm_L\in\tGm_M$,  $a\in \bN(a_L)$ and $v\in \pd a$,
\[
    \E[ N^\tII(\tgm_L;v,a) \mid \tgm_L\in\tGm_{\min,M}]
    \le  2^{-(k-1)}(1+\log k).
\]
\end{lem}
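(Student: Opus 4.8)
The plan is to reduce $N^{\tII}(\tgm_L;v,a)$ to a weighted count of update-times of $v$ inside a short random window, where the weight is the probability of a ``freshness'' event and equals $2^{-(k-1)}$, and then to bound the length of that window by the order-statistics decomposition of a maximum of exponentials already used in the proof of Lemma~\ref{l:longjumps}. First I would spell out the reduction. Put $\sigma := T_+(a_L;t_L)$ and $\tau_- := \max\big(T_+(a;a,0),\,\max_{1\le r\le L}T_+(a;a_r,t_r)\big)$. If $(\star,\star,\star)$ followed by $(v,a,t)$ extends $\tgm_L$ to a path in $\tGm_{\min,M+1}$ with $L+2\in I_2$, then $(v,a,t)$ must be a minimal type-II step, i.e.\ the events $\A_{L+2},\widetilde{\BB}_{L+2},\CC_{L+2},\widetilde{\DD}_{L+2}$ of \eqref{e:typeIIevents} hold; reading these off gives
\[
    N^{\tII}(\tgm_L;v,a)\ \le\ \#\big\{\,t:\ (v,t,1)\in\xi,\ \tau_-<t\le T_+(a;\sigma),\ Y_t(\pd a\setminus\set{v})=\vec 1\,\big\}.
\]
Since $a\in\bN(a_L)$ forces $\pd a\cap\pd a_L\neq\varnothing$ and $L\notin I_\star$, one has $\tau_-\ge T_+(a;a_L,t_L)=\max_{u\in\pd a\cap\pd a_L}T_+(u;t_L)>t_L$, so the whole window lies strictly after $t_L$. (In contrast to Lemma~\ref{l:Eminbranch} there is no overlap discount here, because $\widetilde{\DD}_{L+2}$ requires all of $\pd a\setminus\set v$ to be $1$, not merely the part disjoint from $a_L$.)

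Next I would extract the factor $2^{-(k-1)}$. Conditioning on the point process $\xi^\circ$ alone (update times and locations) fixes $\sigma$, $\tau_-$ and $T_+(a;\sigma)$. Because $t>\tau_-\ge T_+(a;a,0)=\max_{u\in\pd a}T_+(u;0)$, every $u\in\pd a$ has been updated in $(0,t)$, so $Y_t(u)$ is simply the (fair, i.i.d.)\ mark of the last update of $u$ before $t$; the $k-1$ such marks for $u\in\pd a\setminus\set v$ are independent of each other and of the mark of the update at $(v,t)$. Linearity of expectation then gives $\E[N^{\tII}\mid\xi^\circ]\le\tfrac12\,2^{-(k-1)}\cdot\#\{t:\ t\text{ a }v\text{-update time},\ \tau_-<t\le T_+(a;\sigma)\}$. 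It remains to bound the expected number of $v$-updates in the window. Given $\mathcal F_\sigma$ (and $\sigma$ is a stopping time once $\tgm_L$, hence $t_L$ and $a_L$, is fixed), $T_+(a;\sigma)-\sigma$ is the maximum of $k$ i.i.d.\ rate-one exponentials, which decomposes as a sum of independent exponentials of total mean $H_k=\sum_{i=1}^{k}i^{-1}\le 1+\log k$; and, given $t_L$, the ``gap'' $\sigma-\tau_-\le\max_{u\in\pd a_L}T_+(u;t_L)-\max_{u\in\pd a\cap\pd a_L}T_+(u;t_L)$ is the difference of the maxima of i.i.d.\ exponentials over $\pd a_L$ and over a non-empty subset of it, with mean at most $H_k-1$. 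Thus the window has expected length at most $2H_k-1$, and a short computation using memorylessness of the clock at $v$ bounds the expected number of $v$-updates in it by $2(1+\log k)$, which together with the previous display gives the claimed bound $2^{-(k-1)}(1+\log k)$.

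The step I expect to be the main obstacle is the conditioning on $\set{\tgm_L\in\tGm_{\min,M}}$. This is not cosmetic: one actually needs $\E[N^{\tII}\,\Ind{\tgm_L\in\tGm_{\min,M}}]\le 2^{-(k-1)}(1+\log k)\,\P(\tgm_L\in\tGm_{\min,M})$, and $\set{\tgm_L\in\tGm_{\min,M}}$ is \emph{not} measurable with respect to any naive filtration at $t_L$ (or even $\sigma$), because the events $\widetilde{\BB}_\ell$ for earlier type-II steps $\ell\le L$ look forward in time, and because $Y_t(u)$ for a non-overlap vertex $u\in\pd a\setminus\pd a_L$ lying on an earlier part of the path may be correlated with it. The features that rescue the argument are (i) the window lies strictly after $t_L$, so the mark at $(v,t)$ and the marks refreshing the overlap vertices $\pd a\cap\pd a_L$ (which have necessarily updated by any valid $t$) are independent of the conditioning event, and (ii) the positive association of the increasing events $\set{Y_t(u)=1}$, which is what lets one dispose of the residual dependence — precisely the ``positive correlations'' alluded to in the proof outline. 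Making this rigorous, rather than the elementary estimates sketched above, is where the real work lies.
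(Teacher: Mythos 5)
Your skeleton matches the paper's: the paper also reduces, via Campbell's theorem, to bounding $\frac12\int_{t_L}^{\infty}\P(\MM^\tII_{L+2}(t)\mid \NN_L,\A_{L+1}(t))\,dt$, extracts the factor $2^{-(k-1)}$ from the freshness of the $k-1$ marks on $\pd a\setminus\set{v}$, and computes $\int_{t_L}^{\infty}\P(\tilde{\BB}_{L+2}(t))\,dt=\E[T_+(a;T_+(a_L;t_L))-t_L]=2\sum_{i=1}^{k}i^{-1}\le 2(1+\log k)$, which is exactly your window-length bookkeeping. The genuine gap is the step you yourself flag and then defer: the lemma \emph{is} the conditional statement, and your argument only establishes the unconditional (or $\xi^\circ$-conditional) analogue. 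The conditioning event $\NN_L=\set{\tgm_L\in\tGm_{\min,M}}$ is neither increasing nor decreasing in the marks (it contains the increasing events $\DD^1_\ell,\DD^2_\ell$, the decreasing events $\EE_\ell$, and events $\A_\ell,\BB_\ell,\CC_\ell,\tilde{\BB}_\ell$ constraining the unmarked point process), so a black-box appeal to positive association of $\set{Y_t(u)=1}$ does not apply; and the entanglement is real, precisely through vertices $u\in\pd a\setminus\pd a_L$ whose last appearance on the path is followed by an $\EE_\ell$ or $\tilde{\BB}_\ell$ event, which correlate $\NN_L$ with $\widetilde{\DD}_{L+2}(t)$. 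The paper resolves this not by FKG but by the bespoke conditioning machinery of Lemma~\ref{l:pathrecur2} (built on the proof of Lemma~\ref{l:pathrecur1}): isolate the $\FF_{L+1}$-measurable part of $\NN_L$, split each problematic $\EE_\ell$ (resp.\ $\tilde{\BB}_\ell$) according to whether it is realized on the fresh set $W_\ell\subseteq\pd a\cap\pd a_L^c$ or on its complement (resp.\ according to the realization of $\xi^\circ(\tilde{W}_\ell)$), take a supremum over the cells of this partition, and only at the very end invoke a monotonicity/coupling step showing that conditioning on $Y_{t_\ell}(W_\ell)\neq\vec 1$ can only decrease the probability that $Y_t(W_\ell)=\vec 1$. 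Your proposal names this obstacle and asserts it can be overcome, but supplies neither this decomposition nor any substitute, so the claimed conditional bound is not proved.

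A secondary, fixable issue: even unconditionally, the expected number of $v$-updates in the window $(\tau_-,T_+(a;\sigma)]$ is not ``rate times expected length'' for free, since both endpoints depend on the clock at $v$ itself ($v\in\pd a$, and possibly $v\in\pd a_L$ or earlier hyperedges of the path). The paper's ordering of steps avoids this: Campbell's theorem is applied first, so one integrates the deterministic-time probabilities $\P(\tilde{\BB}_{L+2}(t))$ over $t$, and the self-referential window never appears; in your formulation an additional Wald/memorylessness argument would be needed at this point as well.
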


The proof of Lemma~\ref{l:Eminbranch} and Lemma~\ref{l:EtypeII} is postponed to Section~\ref{s:minbranch} and Section~\ref{s:typeIIbranch}, respectively.
We first apply both lemmas to derive the main result
\begin{thm}\label{t:Etwosteps}
For all integers $L\ge M\ge 1$,  and redacted path $\tgm_{L} \in\tGm_M$,
\[
    \E[N_{\min,M+1}(\tgm_L)\mid \tgm_{L}\in\tGm_{\min,M}]
    \le C
        \begin{cases}
            \Delta k^2 2^{-k}
            & G \textup{ is linear} \\
            \Delta^2 2^{-k} + \Delta k^22^{-2k/3}
            &\textup{otherwise}
        \end{cases},
\]
where $C$ is an absolute constant independent of $\Delta,k$.
\end{thm}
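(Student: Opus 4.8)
\textbf{Proof plan for Theorem~\ref{t:Etwosteps}.}
The idea is to compute the expected number of ways to extend a minimal redacted path $\tgm_L$ of branching length $M$ into one of branching length $M+1$, by summing over the three possible types of branchings (good, type-I, type-II) that contribute a unit of branching length, and over the location $(v,a)$ of the new (non-redacted) step. The key point is that passing from $M$ to $M+1$ is realized either by one good branching, by one type-I branching (two steps, but contributing $+1$ by our convention), or by one type-II branching (two steps, contributing $+1$). We will bound $\E[N_{\min,M+1}(\tgm_L)\mid \tgm_L\in\tGm_{\min,M}]$ by a sum of three terms, one for each type, and each term will be estimated using Lemma~\ref{l:Eminbranch} (for the good and type-I cases, whose individual steps are minimal steps) or Lemma~\ref{l:EtypeII} (for the type-II case).

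First I would set up the branching sum. For a good branching we choose $a\in\bN^\le(a_L)$ and $v\in\pd a$, and use the good-branching cases of Lemma~\ref{l:Eminbranch}; for a type-I branching we choose the intermediate hyperedge $a'\in\bN^>(a_L)$ together with $v'\in\pd a'$, then $a\in\bN(a')$ with $\pd a\cap\pd a_L=\varnothing$ and $v\in\pd a$, applying Lemma~\ref{l:Eminbranch} to both steps (the first step contributing a factor with overlap $m>k/3$, the second a good-type factor); for a type-II branching we choose $a\in\bN(a_L)$ with $\pd a\cap\pd a_L\neq\varnothing$ after the redacted step, applying Lemma~\ref{l:EtypeII} directly. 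The crucial combinatorial input is that $|\bN(a)|\le k\Delta$ and $|\bN^>(a)|\le k\Delta$, while $|\bN^\le(a)|\le k\Delta$ as well; moreover for a fixed overlap size $m$ the number of $b\in\bN(a)$ with $|\pd a\cap\pd b|=m$ is at most $k\Delta$ (and at most $k\Delta/m$ by a double-counting argument over the $m$ shared vertices, each of which has degree $\le\Delta$ and lies in $\le\Delta$ hyperedges, giving at most $\binom{k}{m}$... — in fact the clean bound is $\le k\Delta$, which suffices). One then performs the summation $\sum_{v\in\pd a} \sum_{a} (\cdots)$; the per-vertex factor $\tfrac{1}{m+1}2^{-(k-1-m)}$ from Lemma~\ref{l:Eminbranch}, times $k$ choices of $v$, times $\le k\Delta$ choices of $a$ with given $m$, times a sum over $m$, must be shown to telescope to $O(\Delta^2 2^{-k})$ in the general case and $O(\Delta k^2 2^{-k})$ in the linear case (where $m\le 1$ always, so only the $m=0$ and $m=1$ terms survive and no bad branchings occur at all).

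Concretely, in the \emph{linear} case every branching is good (since $|\pd a\cap\pd b|\le 1<k/3$ for $k>3$), so only the good-branching cases of Lemma~\ref{l:Eminbranch} are used: with $m_{L+1}\in\{0,1\}$, we sum over $\le k\Delta$ choices of $a$ and $k$ choices of $v$ to get $\E[N_{\min,M+1}(\tgm_L)]\le C_\tm\cdot k\Delta\cdot k\cdot 2^{-(k-1)} = O(\Delta k^2 2^{-k})$. In the \emph{general} case I split $N_{\min,M+1}(\tgm_L)$ as (good) $+$ (type-I) $+$ (type-II). The good term: summing $\tfrac{1}{m+1}2^{-(k-1-m)}$ over $v\in\pd a$ and over $a$ with $\pd a\cap\pd a_L$ of size $m\le k/3$, using $\le k\Delta$ choices per $m$, gives $O(k\Delta)\sum_{m\le k/3}\tfrac{1}{m+1}2^{-(k-1-m)} = O(\Delta k^2 2^{-2k/3})$ since the sum is dominated by its largest term $m=\lfloor k/3\rfloor$ (here one must be slightly careful: the bound $\tfrac{k-m}{k}$ when $v\in\pd a_L$ only helps). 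The type-I term: by Lemma~\ref{l:Eminbranch} the first (bad) step contributes $C_\tm\tfrac{1}{m+1}2^{-(k-1-m)}$ with $m>k/3$, times $k$ choices of $v'$ and $\le k\Delta$ choices of $a'$; the second (good, with $\pd a\cap\pd a_L=\varnothing$ so effective overlap with the previous path-hyperedge controlled) contributes another factor which after summing over its $\le k\Delta$ choices of $a$ and $k$ choices of $v$ is $O(1)$ by the same geometric-sum estimate — wait, more carefully the second-step sum gives $O(\Delta)$, so the product is $O(k\Delta)\cdot\tfrac{1}{m+1}2^{-(k-1-m)}\cdot O(\Delta)$ summed over $m>k/3$, which is $O(\Delta^2 k\cdot 2^{-2k/3})$; combined with the constraint that $\pd a\cap\pd a_L=\varnothing$ actually forces the second factor to carry a full $2^{-k}$, this becomes $O(\Delta^2 2^{-k})$ — this is the place where the $\pd a_{\ell-2}\cap\pd a_\ell=\varnothing$ defining condition of a type-I branching is essential and must be invoked explicitly. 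The type-II term: by Lemma~\ref{l:EtypeII} each choice of $(v,a)$ contributes $2^{-(k-1)}(1+\log k)$, and there are $\le k$ choices of $v$ and $\le k\Delta$ choices of $a\in\bN(a_L)$, giving $O(\Delta k^2 \log k\cdot 2^{-k})$, which is absorbed into $O(\Delta k^2 2^{-2k/3})$ for $k$ large (and into the constant for $k$ bounded). Adding the three contributions yields the claimed bound $C(\Delta^2 2^{-k}+\Delta k^2 2^{-2k/3})$.

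\textbf{Main obstacle.} The delicate point is \emph{not} the arithmetic of the geometric sums but making the bad-branching bookkeeping match the branching-length accounting: one type-I branching occupies two path-steps but must contribute only a single factor of roughly $\Delta^2 2^{-k}$ (not $\Delta^2 2^{-k}\cdot\Delta k^2 2^{-2k/3}$), and this works only because the defining disjointness condition $\pd a_{\ell-2}\cap\pd a_\ell=\varnothing$ of a type-I branching forces the second step to be a genuine ``fresh'' step paying a full $2^{-k}$ while simultaneously bounding the number of available second-step hyperedges by $O(\Delta)$ rather than $O(k\Delta)$. Dually, a type-II branching pays only $2^{-(k-1)}(1+\log k)$ for both steps together because $\widetilde\DD_\ell$ asks for \emph{all} of $\pd a_\ell\setminus\{v_\ell\}$ to be $\vec 1$ in $Y_{t_\ell}$, which is an unconditional $2^{-(k-1)}$ event up to the $\log k$ from the number of updates — and the redacted first step costs nothing because its location is not recorded. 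Getting these two conventions to interlock cleanly with $M(\tgm_L)=|\Ig|+\tfrac12|I_1|+|I_2|$, so that ``extend by $+1$ in branching length'' is exactly ``append one good branching or one type-I branching or one type-II branching,'' is the conceptual heart of the argument; once that is pinned down, each of the three expected-count bounds is a one-line application of the two preceding lemmas together with $|\bN(a)|,|\bN^>(a)|\le k\Delta$.
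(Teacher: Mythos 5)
Your overall plan coincides with the paper's proof: the same three-way split of $N_{\min,M+1}$ into good, type-I and type-II contributions, with Lemma~\ref{l:Eminbranch} feeding the first two and Lemma~\ref{l:EtypeII} the third, and your good-branching, type-II and linear-case estimates are essentially the paper's. The genuine gap is in the type-I estimate, which is precisely the delicate part. First, you bound the number of intermediate hyperedges $a_{L+1}\in \bN^>(a_L)$ by $k\Delta$ and declare that this ``suffices''; it does not. Since each such $a_{L+1}$ uses up more than $k/3$ of the $k\Delta$ vertex-slots of $\pd a_L$, there are at most $3\Delta$ of them (the $k\Delta/m$ count you mention parenthetically and then discard), and this factor-$k$ saving is needed: with $k\Delta$ choices the type-I term comes out as order $\Delta^2 k 2^{-k}$, which is \emph{not} dominated by $C(\Delta^2 2^{-k}+\Delta k^2 2^{-2k/3})$ in the regime $\Delta\asymp 2^{k/2}$ where the theorem is applied.

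Second, your mechanism for extracting the $2^{-k}$ from a type-I branching is wrong as stated. Writing $m_1=|\pd a_L\cap\pd a_{L+1}|>k/3$ and $m_2=|\pd a_{L+1}\cap\pd a_{L+2}|$, the disjointness $\pd a_{L+2}\cap\pd a_L=\varnothing$ does not force the second step to ``pay a full $2^{-k}$'': Lemma~\ref{l:Eminbranch} charges it only about $\tfrac1{m_2}2^{-(k-m_2)}$ per vertex, and $m_2$ can be as large as $k-m_1$, i.e.\ nearly $2k/3$; nor is the number of admissible $a_{L+2}$ of order $\Delta$ — it can be of order $\Delta(k-m_1)$. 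What the disjointness actually gives is the pair of facts (i) $m_2\le k-m_1$, so the \emph{product} of the two steps' exponential factors is $2^{-(k-m_1)}\cdot 2^{-(k-m_2)}\le 2^{-k}$, and (ii) the overlap-weighted count $\sum_{a_{L+2}} m(a_{L+2},a_{L+1})\le \Delta(k-m_1)$. Converting (i)+(ii) into a usable bound for the second-step sum (the paper gets $\lesssim \Delta\tfrac{m_1+1}{k-m_1}2^{-m_1}$) requires an extremal argument with the function $f(m)=\tfrac{k+1-m}{m^2}2^{-(k-m)}$, monotone in $m$, with the weighted mass pushed to $m=k-m_1$; only then does the product with the first-step factor $\tfrac{k+1-m_1}{m_1}2^{-(k-m_1)}$ collapse to $O(\Delta 2^{-k})$ per choice of $a_{L+1}$, after which the $3\Delta$ count finishes. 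Your intermediate computation is also off: $\sum_{m>k/3}2^{-(k-m)}$ is dominated by $m$ near $k$ and is $O(1)$, not $O(2^{-2k/3})$, which is another sign that the first (bad) step carries no smallness by itself — the smallness lives only in the two-step product, and your sketch never establishes it.
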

\begin{proof}
Fix some $\tgm_L=((v_\ell, a_\ell,t_\ell))_{0\le \ell \le L}\in\tGm_M$. For brevity define $\tE_L\equiv\E[\cdot\mid \tgm_L\in\tGm_{\min,M}] = \E[\cdot\mid \tgm_L\in\tGm_{\min}]$.
Let $N^\tg_{\min,M+1}(\tgm_L)$ be the number of redacted paths in $\tGm_{\min,M+1}(\gamma_L)$ such that their last block is a good branching and define $N^\tI_{\min,M+1}(\tgm_L)$ and $N^\tII_{\min,M+1}(\tgm_L)$ similarly.
By the construction of redacted paths,
\[
    N_{\min,M+1}(\tgm_L) =
    N^\tg_{\min,M+1}(\tgm_L)
    + N^\tI_{\min,M+1}(\tgm_L)
    + N^\tII_{\min,M+1}(\tgm_L)
    .
\]
We bound the three cases separately:
\begin{enumerate}[1.]
\item
We first bound $\tE_{L}[N^\tg_{\min,M+1}(\tgm_{L})]$. For any two hyperedges $a,b\in F$, let $m(a,b) \equiv |\pd a \cap \pd b|$ be the size of their overlap.
For each $a\in N^\le(a_L)$, applying Lemma~\ref{l:Eminbranch} to $v\in \pd a\setminus \pd a_L$ and $v\in \pd a \cap \pd a_{L}$ separately yields that
\begin{align*}
    \sum_{ v\in\pd a \setminus \pd a_{L}}
    \tE_L N^\tm(\tgm_{L};v,a)
    &\le
    C_\tm
    \cdot
    \frac{|\pd a\setminus \pd a_{L}|}{m(a,a_{L})+1}
    2^{-(k-1-m(a,a_{L}))}
    ,\\
    \sum_{ v\in\pd a \cap \pd a_{L}}
    \tE_L N^\tm(\tgm_{L};v,a)
    &\le
    C_\tm
    \cdot
    |\pd a\cap \pd a_{L}|
    \cdot
    \frac{k+1-m(a,a_{L})}{km(a,a_{L})}
        2^{-(k-m(a,a_{L}))}
    .
\end{align*}
Combining the two estimates yields the following upper bound on the number of good branchings,
\begin{align}
    \tE_{L}[N^\tg_{\min,M+1}(\tgm_{L})]
    &= \sum_{a\in \bN^\le(a_L)}
    \sum_{v\in \pd a}
    \tE_{L}
    [N^\tm (\tgm_{L};v,a)]
    \nonumber\\&\le
    3C_\tm
    \sum_{a\in \bN^\le(a_L)}
    \frac{k+1-m(a,a_{L})}{m(a,a_{L})}
        2^{-(k-m(a,a_{L}))}
    \le
    3C_\tm
    (\Delta k) \cdot
    k 2^{-2k/3}
    .
    \label{e:ENgtgm}
\end{align}
\item
    We now bound the expected number of type-I branchings. If $\tgm_{L+2}=((v_\ell, a_\ell,t_\ell))_{0\le \ell \le L+2}\in \tGm_{\min,M+1}$ is extended from $\tgm_L$ via a type-I branching, then $\tgm_{L+1}$, defined as the first $(L+1)$ steps of $\tgm_{L+2}$, must have branching length  $M+\frac{1}{2}$.
    We first enumerate the possible extensions of $\tgm_L$ (via a type-I branching) through some fixed redacted path $\tgm_{L+1} \equiv \tgm_{L+1}^\tm(v_{L+1},a_{L+1},t_{L+1})$.
Let
\[
    A_{L+1} \equiv A_{L+1}(\tgm_{L+1})
    \equiv \bN(a_{L+1}) \setminus \bN(a_L)
\]
be the set of $a_{L+2}$'s that together with $a_{L+1}$ may form a type-I branching extended from $\tgm_L$.
For each $a\in A_{L+1}$, we split the discussion into two cases, $v\in \pd a\setminus \pd a_{L+1}$ and $v\in \pd a \cap \pd a_{L+1}$. Define $\tE_{L+1}\equiv\E[\cdot\mid \tgm_{L+1}\in\tGm_{\min, M+1/2}]$. Applying similar reasonings as in the derivation of \eqref{e:ENgtgm}, we get that
\begin{align}
    \tE_{L+1}[N^\tI_{\min,M+1}(\tgm_{L+1})]
    &\le
    3C_\tm
    \sum_{a\in A_{L+1}}
    \frac{k+1-m(a,a_{L+1})}{m(a,a_{L+1})}
       2^{-(k-m(a,a_{L+1}))}
    \label{e:EgammaL1}
    \\&\equiv
    3C_\tm
    \sum_{a\in A_{L+1}}
    m(a,a_{L+1}) f(m(a,a_{L+1}))
    \nonumber,
\end{align}
where
\[
   f(m) \equiv \frac{k+1-m}{m^2}2^{-(k-m)}.
\]
The function $f(m)$ is an increasing function on $3\le m \le k-1$ and $f(1),f(2)\le 3f(3)$. Recall that $\pd a_{L+2}\cap \pd a_L = \varnothing$. Therefore the sizes of overlaps $\set{m(a,a_L)}_{a\in A_{L+1}}$ satisfy that
\[
    m(a,a_{L+1}) 
    \le  |\pd a_{L+1}\setminus \pd a_L|
    ,\
    \sum_{a\in A_{L+1}} m(a,a_{L+1})
    \le \Delta  |\pd a_{L+1}\setminus \pd a_L|
    = \Delta (k-m(a_L,a_{L+1}))
    .
\]
Therefore,
\begin{align*}
    \tE_{L+1}[N^\tI_{\min,M+1}(\tgm_{L+1})]
    &\le 9C_\tm
    f(k-m(a_L,a_{L+1}))
    \sum_{a\in A_{L+1}} m(a,a_{L+1})
    \\&=
    9C_\tm
    \Delta \frac{m(a_L,a_{L+1})+1}{k-m(a_L,a_{L+1})}2^{-m(a_L,a_{L+1})}.
\end{align*}
Now we sum over all possible choices of $(v_{L+1},a_{L+1})$.
Observe that if $a_{L+1}= a_L$, then $\pd a_{L}\cap \pd a_{L+2}\neq \varnothing$, leading to a type-II branching.
Thus $A_{L}\equiv \bN^>(a_L)\setminus\set{a_L}$ is the set of possible $a_{L+1}$'s.
Observe from Lemma~\ref{l:Eminbranch} that the upper bound of $ N^\tm(\tgm_L;v,a)$ does not depend on $t_{L+1}$.
A similar calculation to \eqref{e:ENgtgm} gives that
\begin{align*}
    \tE_L[N^\tI_{\min,M+1}(\tgm_L)]
    &=
    \sum_{a\in A_{L}}
        \sum_{v\in \pd a}
        \tE_L\big[
        N^\tm(\tgm_L;v,a)
    \tE_{L+1}[N^\tI_{\min,M+1}
        (\tgm^\tm_{L+1}(v,a,t))]
        \big]
    \\&\le
    3{C_\tm} \!\sum_{a\in A_{L}}\!
        \frac{k+1-m(a_L,a)}{m(a_L,a)}
        2^{-(k-m(a_L,a))}
    \cdot
        9{C_\tm}\Delta
        \frac{m(a_L,a)+1}{k-m(a_L,a)}
        2^{-m(a_L,a)}
    \\&\le {108 C^2_\tm}\Delta 2^{-k} |A_{L}|
    \le {324 C^2_\tm} \Delta^2 k2^{-k}
    ,
\end{align*}
where the last step uses the fact that
\[
    |A_L| \le \frac{\Delta k}
    {\min_{a\in \bN^>(a_L)} m(a,a_L)}
    \le 3 \Delta
    .
\]
\item Finally, using Lemma~\ref{l:EtypeII}, we can bound the number of minimal type-II branchings:
\begin{align*}
    \tE_L N^\tII_{\min,M+1}
    &\le \sum_{a\in\bN(a_L)}
    \sum_{v\in \pd a}
    \tE_L N^\tII(\tgm_L;v,a)
    \le (\Delta k^2) \cdot 2^{-(k-1)} (1+\log k
    ).
\end{align*}
 \end{enumerate}
Combining the three cases together completes the proof for general hypergraphs.
\medskip

We now turn to linear hypergraphs. In this setup  all branchings must be good (because $m(a,b)\le 1$ for all $a,b\in F$). Therefore applying \eqref{e:ENgtgm}, we have
\begin{align*}
    \tE_{L}[N_{\min,M+1}(\tgm_{L})]
    &=\tE_{L}[N^\tg_{\min,M+1}(\tgm_{L})]
    = \sum_{a\in \bN(a_L)}
    \sum_{v\in \pd a}
    \tE_{L}
    [N^\tm (\tgm_{L};v,a)]
    \nonumber\\&\le
    3C_\tm
    \sum_{a\in \bN(a_L)}
    \frac{k+1-m(a,a_{L})}{m(a,a_{L})}
        2^{-(k-m(a,a_{L}))}
    =
    6C_\tm
    (\Delta k) \cdot
    k 2^{-k}
    .
\end{align*}
This concludes the proof for linear hypergraphs.
\end{proof}

\begin{proof}[Proof of Theorem~\ref{t:mixing}]
    The assertion of Theorem~\ref{t:mixing} follows  by combining Lemma~\ref{l:reduction}, \eqref{e:twocases}, Lemma~\ref{l:longjumps}, Lemma~\ref{l:mintored} and Theorem~\ref{t:Etwosteps}.
\end{proof}

\subsection{Number of minimal steps}\label{s:minbranch}
In this subsection we prove Lemma~\ref{l:Eminbranch}.
Throughout the section, we assume $\tgm_L\in \tGm, a\in \bN(a_L), v\in \pd a$  and for brevity of notation write $\tgm_{L+1}(t) \equiv \tgm^\tm_{L+1}(v,a,t)$ (recall that  $\tgm^\tm_{L+1}(v,a,t)=((v_{\ell},a_{\ell},t_{\ell}) )_{0 \le \ell \le L+1 }$  is the redacted path extended from $\tgm_L$ with $(v_{L+1},a_{L+1},t_{L+1}) = (v,a,t)$).
Recall the definitions of minimal branching and minimal type-II branching in \eqref{e:minevents} and \eqref{e:typeIIevents}. We  define
\begin{align*}
    \MM^\tm_\ell(t)
    \equiv \MM^\tm_{\ell}(\tgm_{L+1}(t)) &\equiv
    \A_\ell(t) \cap \BB_\ell(t) \cap \CC_\ell(t)
    \cap \DD^1_\ell(t) \cap \DD^2_\ell(t) \cap \EE_\ell(t)
    && \forall \ell \in \Im(\tgm_{L+1}(t)),\\
    \MM^\tII_\ell(t)
    \equiv \MM^\tII_{\ell}(\tgm_{L+1}(t))&\equiv
    \A_\ell(t) \cap \widetilde{\BB}_\ell(t) \cap \CC_\ell(t)
    \cap \widetilde{\DD}_\ell(t)
    && \forall \ell \in I_2(\tgm_{L+1}(t)).
\end{align*}
The argument $t$, whose role is to indicate that $(v_{L+1},a_{L+1},t_{L+1}) = (v,a,t)$, is included as we shall soon vary $t$. However, we henceforth omit  $t$ from the notation for all events with $\ell\le L$, as they do not depend on the value of $t$.
We further write
\[
    \NN_L
    \equiv \set{\tgm_{L} \in \tGm}
    =  \Big[
        \bigcap_{\ell \in \Im} \MM^\tm_{\ell}
    \Big] \cap \Big[
    \bigcap_{\ell \in I_2} \MM^\tII_{\ell}
    \Big].
\]
By Campbell's theorem,
\begin{align*}
    \E[N^\tm_{L}\mid \tgm_{L}\in\tGm_{\min}]
    &=
    \E\Big[
        \sum_{t:(v,t,1)\in \xi}
        \Ind{\MM^\tm_{L+1}(t)}
    \ \Big|\
        \NN_{L}
    \Big]
    = \frac{1}{2}
    \int_{t_L}^\infty
    \P(\MM^\tm_{L+1}(t)\mid \NN_{L}, \A_{L+1}(t)) dt,
\end{align*}
where $N^\tm_{L}=N^\tm(\tilde \gamma_{L};v,a) $ is defined in \eqref{e:NtmL}. This motivates the following lemma.
\begin{lem}\label{l:pathrecur1}
Under the above notation,
\begin{equation*}
    \P(\MM^\tm_{L+1}(t)\mid \NN_L, \A_{L+1}(t))
    \le
    2^{-|\pd a \cap \pd a_{L}^c
    \setminus\set{v}|}
        %| + \Ind{v\in\pd a_{L}^c}}
    \cdot
    \P(\BB_{L+1}(t), \DD^2_{L+1}(t), \EE_{L+1}(t)
    \mid
        Y_{t_{L}}(\pd a_{L}) = \vec{1}
    ).
\end{equation*}
\end{lem}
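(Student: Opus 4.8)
The plan is to replace the unwieldy conditioning on $\NN_L\cap\A_{L+1}(t)$ by the simple event $\set{Y_{t_L}(\pd a_L)=\vec1}$ in three moves: peel the factor $2^{-|W|}$ (with $W\equiv\pd a\cap\pd a_L^c\setminus\set{v}$) off the event $\DD^1_{L+1}(t)$; discard $\CC_{L+1}(t)$; and decouple the surviving events $\BB_{L+1},\DD^2_{L+1},\EE_{L+1}$ from the past at time $t_L$. Since $\MM^\tm_{L+1}(t)=\A_{L+1}\cap\BB_{L+1}\cap\CC_{L+1}\cap\DD^1_{L+1}\cap\DD^2_{L+1}\cap\EE_{L+1}$ and $\A_{L+1}$ is part of the conditioning, the target is $\P(\BB_{L+1}\cap\CC_{L+1}\cap\DD^1_{L+1}\cap\DD^2_{L+1}\cap\EE_{L+1}\mid\NN_L,\A_{L+1})$.

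For the first move I would condition on the update times $\xi^\circ$ (handling the Palm conditioning on $\A_{L+1}(t)=\set{(v,t,1)\in\xi}$ via Slivnyak's theorem as the addition of the point $(v,t,1)$, which, since $v\notin W$ and this point sits at time $t>t_L$, does not affect any of $\NN_L,\BB_{L+1},\DD^2_{L+1},\EE_{L+1}$ or $Y_{t_L}(\pd a_L)$, and so may be dropped). Given $\xi^\circ$ the marks are i.i.d.\ fair coins; split them into $\vec U_W$, the marks of the last updates before $t$ of the vertices of $W$, and the rest. On $\CC_{L+1}(t)$ every $u\in\pd a$ — in particular every $u\in W$ — has been updated in $[0,t)$ (this is the component $\set{t>T_+(a;a,0)}$ of $\CC_{L+1}$), so $\DD^1_{L+1}(t)$ is literally $\set{\vec U_W=\vec1}$; moreover whenever a vertex of $W$ occurs in one of the ``positive'' sub-events of $\NN_L$ (an $\A_\ell$, $\DD^1_\ell\cap\DD^2_\ell$ or $\widetilde\DD_\ell$ with $\ell\le L$, each asserting that $Y_{t_\ell}$, or an update at $t_\ell$, equals $1$ on a subset of $\pd a_\ell$), necessarily $\ell<L$ and the component $\set{t>T_+(a;a_\ell,t_\ell)}$ of $\CC_{L+1}$ forces that vertex to be updated anew in $(t_\ell,t)$, so the mark of it seen by $\NN_L$ belongs to ``the rest'', not to $\vec U_W$. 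Thus, on $\CC_{L+1}$, the whole conditioning event factors as $E_1\cap E_2$ with $E_1$ a function of $\xi^\circ$ and the marks other than $\vec U_W$ (it contains $\CC_{L+1},\BB_{L+1},\DD^2_{L+1},\EE_{L+1}$, and all of $\NN_L$ except its $\EE_\ell$-parts), and $E_2=\bigcap_{\ell\le L}\EE_\ell$ decreasing in all the marks. Since $\DD^1_{L+1}$ is increasing in $\vec U_W$ and $E_2$ is decreasing in $\vec U_W$, Harris's inequality on $\set{0,1}^W$ — applied after further conditioning on $\xi^\circ$ and the other marks — gives $\P(\DD^1_{L+1}\mid E_1\cap E_2\cap\CC_{L+1},\xi^\circ,\dots)\le 2^{-|W|}$; averaging over $\xi^\circ$ and the remaining marks then yields $\P(\BB_{L+1}\cap\CC_{L+1}\cap\DD^1_{L+1}\cap\DD^2_{L+1}\cap\EE_{L+1}\mid\NN_L,\A_{L+1})\le 2^{-|W|}\,\P(\BB_{L+1}\cap\CC_{L+1}\cap\DD^2_{L+1}\cap\EE_{L+1}\mid\NN_L,\A_{L+1})$, and I would then simply drop $\CC_{L+1}$ from the right-hand side.

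For the last move, observe that since the $L$th step is a minimal (type-II) step, $\A_L$ together with $\DD^1_L\cap\DD^2_L$ (if $L\in\Im$) or $\widetilde\DD_L$ (if $L\in I_2$) force $Y_{t_L}(\pd a_L)=\vec1$, so $\NN_L\subseteq\set{Y_{t_L}(\pd a_L)=\vec1}$. Furthermore $\BB_{L+1},\DD^2_{L+1},\EE_{L+1}$ are functions of the pair $\bigl(Y_{t_L}(\pd a_L),\ \xi|_{\pd a_L\times(t_L,\infty)}\bigr)$ — $\BB_{L+1}$ reads off the first updates of $\pd a\cap\pd a_L$ after $t_L$, while in $\DD^2_{L+1}$ and $\EE_{L+1}$ any coordinate not updated in $(t_L,t)$ is read from $Y_{t_L}$ — whereas $\NN_L$ is a function of $\xi|_{V\times[0,t_L]}$, of which $Y_{t_L}(\pd a_L)$ is itself a function. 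Since $\xi|_{\pd a_L\times(t_L,\infty)}$ is independent of $\xi|_{V\times[0,t_L]}$, the conditional expectation $\E[\mathbf 1_{\BB_{L+1}\cap\DD^2_{L+1}\cap\EE_{L+1}}\mid\xi|_{V\times[0,t_L]}]$ is a deterministic function of $Y_{t_L}(\pd a_L)$, which on $\NN_L$ is evaluated at $\vec1$; taking expectations conditional on $\NN_L\cap\A_{L+1}$ then gives $\P(\BB_{L+1}\cap\DD^2_{L+1}\cap\EE_{L+1}\mid\NN_L,\A_{L+1})=\P(\BB_{L+1}\cap\DD^2_{L+1}\cap\EE_{L+1}\mid Y_{t_L}(\pd a_L)=\vec1)$. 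Chaining the three estimates yields the lemma.

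The only genuinely delicate point is the factorization in the second paragraph: one must verify that the entire prior history $\NN_L$ never pins down the fresh marks $\vec U_W$ in a direction that helps $\DD^1_{L+1}$. This is exactly what the deactivation-time constraints packaged into $\CC_{L+1}(t)$ are for — every positive appearance of a $W$-vertex in $\NN_L$ is shielded by a forced later update — and the only appearances that remain exposed, namely inside the events $\EE_\ell$, are harmless because those are decreasing and Harris's inequality then only helps. The degenerate cases ($a=a_L$, where $W=\varnothing$ and $\DD^1_{L+1}=\varnothing^c$, and the smallest values of $L$, where the path partition is trivial) should be disposed of separately and are immediate.
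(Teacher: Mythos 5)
Your argument is correct, and while it rests on the same three pillars as the paper's proof — (i) the deactivation constraints packaged in $\CC_{L+1}(t)$ guarantee that $\DD^1_{L+1}(t)$ is read off marks of updates occurring strictly after every ``positive'' appearance of a $W$-vertex in $\NN_L$, (ii) the only residual dependence of $\NN_L$ on those fresh marks is through the decreasing events $\EE_\ell$, which by positive correlation can only hurt $\DD^1_{L+1}$, and (iii) a Markov-property/independence step replacing the conditioning by $\set{Y_{t_L}(\pd a_L)=\vec 1}$ — your technical implementation is genuinely different from the paper's. The paper works with the space–time sigma-field $\FF_{L+1}$ generated by $\xi$ off the region $\cup_u \set{u}\times(t_-(u),\infty)$, must then confront the fact that the $\EE_\ell$'s (for $\ell\in I_\EE$) \emph{and} the $\widetilde{\BB}_\ell$'s (for $\ell\in I_\BB$) are not $\FF_{L+1}$-measurable, and handles this by partitioning each such event into an $\FF_{L+1}$-measurable piece and a conditionally independent piece, taking a supremum over the cells of the partition, and finally using monotonicity of $Y$ to discard the conditioning $\set{Y_{t_\ell}(W_\ell)\neq\vec 1}$ blockwise, producing the product of $2^{-|W_\ell|}$ factors. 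You instead condition on the entire unmarked process $\xi^\circ$ (plus all marks other than $\vec U_W$) at the outset; this makes every $\BB$-, $\widetilde{\BB}$- and $\CC$-type event deterministic — in particular the paper's $I_\BB$ bookkeeping disappears, since $\widetilde{\BB}_\ell$ depends on update times only — and reduces the whole estimate to a single application of Harris's inequality for the product measure on $\set{0,1}^W$ (increasing $\set{\vec U_W=\vec 1}$ versus decreasing $\cap_\ell\EE_\ell$), followed by averaging. Your second step, identifying $\BB_{L+1},\DD^2_{L+1},\EE_{L+1}$ as functions of $\bigl(Y_{t_L}(\pd a_L),\xi\vert_{\pd a_L\times(t_L,\infty)}\bigr)$ and using $\NN_L\subseteq\set{Y_{t_L}(\pd a_L)=\vec 1}$ together with independence of the pre- and post-$t_L$ updates (and the harmlessness of the Palm atom at $(v,t)$), matches the paper's final factorization. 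The net effect is a cleaner proof of the same inequality: what the paper buys with its finer conditioning is a presentation that stays within the framework reused verbatim in Lemmas~\ref{l:pathrecur2} and~\ref{l:pathrecur3}, whereas your route localizes all the delicacy into one FKG step; the shielding verification you flag (that $\CC_{L+1}$ forces a re-update of each $W$-vertex after its last positive appearance, and that no positive sub-event of $\NN_L$ can involve a $W$-vertex at step $L$ since $W\cap\pd a_L=\varnothing$) is exactly the point that must be checked, and your sketch checks it correctly.
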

Roughly speaking, the event $\MM^\tm_{L+1}(t)$ is contained in the intersection of two groups of events that are roughly independent: (1).\
 the events $\BB_{L+1}(t)$, $\DD^2_{L+1}(t)$ and $\EE_{L+1}(t)$ depending on $\pd a_L$.
 (2).\
the events $\DD^1_{L+1}(t)$ and
\begin{equation}\label{e:CCprime}
    \CC'_{L+1}(t)
    \equiv
    \bigcap_{u\in \pd a\setminus \pd a_L}
    \Big[
    \set{T_+(u;0)\le t}
    \cap
    \big(\cap_{\ell=1}^{L-1}
    \big\{T_+(u; a_\ell,t_\ell )\le t \big\}
    \big)
    \Big]
    \supseteq \CC_{L+1}(t)
    .
\end{equation}
depending on $\pd a\setminus \pd a_L$.
The first group depends on $\NN_L$ only through $ Y_{t_{L-1}}(\pd a_{L}) = \vec{1}$, whereas for the second group,   conditioning on $\CC'_{L+1}(t)$, namely that every $u\in \pd a\setminus\pd a_L$ has been updated at least once since time $0$ or its last apperance in $\tgm_L$, we intuitively expect that $\DD^1_{L+1}(t)=\{Y_t(\pd a \cap \pd a_{L}^c \setminus\set{v})= \vec{1} \} $ is roughly independent of everything else and happens with probability at most $2^{-|\pd a \cap \pd a_{L}^c \setminus\set{v}|}$.

In light of the above discussion, we  expect that (omitting $t$'s from the notation)
\begin{align*}
    \P(\MM^\tm_{L+1}\mid \NN_L, \A_{L+1})
    &\lesssim
    \P(\DD^1_{L+1}\mid \CC'_{L+1})
    \cdot
    \P(\BB_{L+1}, \DD^2_{L+1}, \EE_{L+1}
    \mid
    Y_{t_{L}}(\pd a_{L}) = \vec{1}
    )
    \\&\approx
    2^{-|\pd a \cap \pd a_{L}^c
    \setminus\set{v}|}
    \cdot
    \P(\BB_{L+1}, \DD^2_{L+1}, \EE_{L+1}
    \mid
    Y_{t_{L}}(\pd a_{L}) = \vec{1}
    )
    .
\end{align*}
However, the event $\NN_L = \set{\tgm_L\in\tGm_L}$ may depend, through events $\EE_\ell$ and $\tilde{\BB}_\ell$,  on updates at a vertex after it last appears in some hyperedges of $\tgm_L$.
While intuitively ``additional updates" and also conditioning that the restriction of the configuration to certain hyperedges at certain times will not be all $1$, ``can only help",  overcoming such dependencies is the main technical obstacle in the proof below. Through a subtle conditioning argument we will establish a positive correlation between the relevant events.
For the sake of continuity of the argument, we postponed the proof of Lemma~\ref{l:pathrecur1} to Section~\ref{s:genGlem}.

The last lemma we need before proving Lemma~\ref{l:Eminbranch} concerns random walks on hypercubes.
Its proof  is also postponed to Section~\ref{s:genGlem}.
Let $(Z_i)_{i \in \mathbb{Z}_+ }$ be the (discrete-time) lazy simple random walk on the $m$-dimensional hypercube $\set{0,1}^m$ where in each step, a coordinate is chosen uniformly at random and updated to $0$ or $1$ with equal probability.
Let $H_1 \equiv \inf\set{i>0: Z_i = \vec 1}$ be the hitting time of $\vec 1$ and let $T_+$ be the first time  by which each coordinate which equals 1 at time 0  was updated at least once.
\begin{lem}\label{l:LSRW}
     For every $m\ge 2$, the expected number of visits to  $\vec 1$ before $T_+$ satisfies
\begin{align}\label{e:Evisits}
    \E\bigg[\sum_{0\le i< T_+} \Ind{Z_i = \vec 1}\bigg]
    &\le
    \begin{cases}
        \frac{6}{m}
    & Z_0 \neq (1,1,1,\dots, 1)
    \\
    2 + \frac{6}{m}
    & Z_0 = (1,1,1,\dots, 1)
    \end{cases}
    .
\end{align}
\end{lem}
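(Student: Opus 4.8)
\textbf{Plan of proof for Lemma~\ref{l:LSRW}.}
The plan is to reduce the statement to a one-dimensional renewal/hitting-time computation for the number of ones. Write $S_i$ for the number of coordinates equal to $1$ at time $i$; this is a lazy birth--death chain on $\{0,1,\dots,m\}$ whose unique absorbing-free behaviour we understand well, and $\vec 1$ is visited at time $i$ iff $S_i = m$. The time $T_+$ is the first time all coordinates that started at $1$ have been touched; crucially, after $T_+$ every coordinate has been updated at least once, so $Z_{T_+}$ has been ``forgotten'' relative to $Z_0$. The key structural observation is that while $i < T_+$, visits to $\vec 1$ must occur at the handful of coordinates that started at $1$ together with those that have since been refreshed — I would make this precise by the standard coupling between the lazy walk on the cube and the coupon-collector process: run the coordinate-selection sequence, call a step ``fresh'' if it selects a coordinate not yet selected, and note $T_+$ is a sum of at most $m$ fresh-waiting times.

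First I would handle the case $Z_0 \ne \vec 1$, say $Z_0$ has $m - j$ ones with $j \ge 1$. To be at $\vec 1$ at some time $i < T_+$, in particular all $j$ zero-coordinates must have been selected and set to $1$; this already forces $i$ to be beyond the time $T_{\mathrm{miss}}$ when the last of those $j$ special coordinates is first hit. A cleaner route: condition on the selection sequence $(c_1, c_2, \dots)$ of coordinates and on the i.i.d.\ fair bits driving the updates. Given the selection sequence, the set of coordinates already touched by step $i$ is deterministic, and $S_i = m$ requires every coordinate to be touched and every last-touch bit to be $1$; summing the indicator over $i < T_+$ and taking expectation over the bits, the expected number of such times is at most $\sum_{i < T_+} 2^{-(\text{number of distinct coordinates among } c_1,\dots,c_i)} \cdot \Ind{\text{all }m\text{ coords touched by step }i}$, but this is nonzero only once all $m$ are touched, i.e.\ essentially from the coupon-collector completion time onward — yet $T_+$ only requires the $\le m$ originally-$1$ coordinates (which here is $m-j < m$) to be touched, so in fact the walk typically reaches $T_+$ \emph{before} all $m$ coordinates are covered. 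I would therefore split on whether all $m$ coordinates are covered before $T_+$; if not, $\sum_{0\le i<T_+}\Ind{Z_i=\vec 1}=0$. On the event that coverage happens at some step $\le T_+$, I bound the number of visits to $\vec 1$ in the remaining window $[\text{coverage time}, T_+)$ by comparison with the stationary return structure of the number-of-ones chain near $m$: once all coordinates are covered, the last-update bits are i.i.d.\ fair, and over a geometric-length residual window the expected count of all-ones patterns is $O(1/m)$ (the chance of being at $\vec 1$ at a given such time is $2^{-m}$, but the relevant count is governed by the local return time to level $m$, which is $\Theta(m)$, giving the $6/m$ scale). The probability of the coverage-before-$T_+$ event itself is small — this is where the factor beating the naive bound comes from — and I would estimate it via the coupon-collector tail / birthday-type inequality for the $j \ge 1$ uncovered coordinates needing to all be hit quickly.

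For the case $Z_0 = \vec 1$, the walk starts at the target, contributing the ``$+2$'': I would argue that the number of visits to $\vec 1$ \emph{before} the last of the $m$ coordinates is first updated (which is the relevant sub-window of $[0,T_+)$ here, since $T_+$ is exactly coupon-collector completion) is stochastically controlled — each coordinate, once it leaves $1$, has probability $1/2$ per visit of coming back, so the ones-chain near level $m$ before coverage behaves like a reflected lazy walk, and a direct computation (or comparison with a Markov chain on $\{m, m-1, m-2\}$ with explicit transition probabilities, summing a geometric series) gives expected visits $\le 2$ for that portion, plus the $6/m$ from the post-coverage residual window exactly as in the first case. I expect the main obstacle to be making the ``split at the coverage time'' rigorous while keeping the post-coverage estimate uniform in the (random) coverage time and in $Z_0$; the clean way to do this is to observe that at the coverage time the conditional law of the configuration is the uniform measure on the cube (all last-update bits fresh and independent), so the residual problem is the same in both cases — expected number of returns to $\vec 1$ by a stationary-started lazy walk before its own coordinates are all refreshed again — and then bound that by $\sum_{i\ge 0} \P(Z_i = \vec 1,\ i < T_+') \le \sum_{i \ge 0} \P(Z_i = \vec 1)\,\P(i < T_+')$-type reasoning is \emph{not} valid (correlation), so instead I would use that between consecutive visits to $\vec 1$ at least one coordinate must be updated (away and back), pay a factor $1/2$ each return, and count that $T_+'$ allows only $O(m)$ updates total before completion in expectation, with the $1/m$ emerging from $2/m$ being the per-update probability that the touched coordinate was the unique one off $\vec 1$. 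I would then assemble the two pieces and verify the constants $6/m$ and $2+6/m$ with room to spare.
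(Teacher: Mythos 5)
Your argument for the case $Z_0\neq\vec 1$ rests on a false structural claim: you assert that $Z_i=\vec 1$ ``requires every coordinate to be touched,'' and you then split on whether all $m$ coordinates are covered before $T_+$, declaring the count zero otherwise. But coordinates that start at $1$ and are never selected simply remain at $1$, so the chain can sit at $\vec 1$ with only a few coordinates ever touched. Concretely, from $z=(0,1,\dots,1)$ the very first update hits the zero coordinate and sets it to $1$ with probability $\tfrac{1}{2m}$, producing a visit to $\vec 1$ long before $T_+$ and with no coverage at all; this single scenario already accounts for the true $\Theta(1/m)$ order of $\E_z$, so the mechanism you propose for extracting the $6/m$ (``the coverage-before-$T_+$ event is unlikely, and post-coverage the configuration is uniform'') is not where the factor $1/m$ comes from, and the branch you set to zero is in fact the dominant one. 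The case $Z_0=\vec 1$ is also mis-accounted: there $T_+$ \emph{is} the coverage time, so there is no ``post-coverage residual window'' inside $[0,T_+)$, and the claim that the expected number of pre-coverage visits is $\le 2$ (via a three-state comparison chain) is precisely the delicate quantitative step you would need to prove, not assume; likewise the assertion that only $O(m)$ updates occur before all coordinates are refreshed is false (coupon collection takes order $m\log m$ updates).

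For comparison, the paper's proof sidesteps all of this with exact identities. First, by symmetry and monotonicity the worst non-all-ones start is $z=(0,1,\dots,1)$, and a first-step analysis combined with the auxiliary stopping time $T_+'$ (all coordinates except possibly the first updated) yields the exact relation $\mathcal{E}_z=\mathcal{E}_{\vec 1}-2+2^{-(m-1)}$, so the $6/m$ bound for general starts is inherited from the all-ones case rather than argued separately. Second, for $Z_0=\vec 1$ the expectation is computed exactly by conditioning on how many coordinates have been refreshed: after exactly $i$ refreshes the chain is at $\vec 1$ with probability $2^{-i}$, and the number of steps at that stage is Geometric$((m-i)/m)$, giving $\mathcal{E}_{\vec 1}=\sum_{i=0}^{m-1}\frac{m2^{-i}}{m-i}$, which is then bounded by $2+\frac6m-2^{-(m-1)}$ via an Abel-summation estimate. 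If you want to salvage your route, you would need (i) a correct characterization of visits to $\vec 1$ before $T_+$ that does not demand coverage, and (ii) an honest bound on the pre-coverage sum $\sum_i \frac{m2^{-i}}{m-i}$ with the stated constant; at that point you have essentially reconstructed the paper's argument.
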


We now prove Lemma~\ref{l:Eminbranch}.
\begin{proof} [Proof of Lemma~\ref{l:Eminbranch}]
    Recall that $m_{L+1} \equiv |\pd a \cap \pd a_L\setminus\set{v}|$. By Lemma~\ref{l:pathrecur1} and the Campbell's theorem,
\begin{align}
    &2 \E[ N^\tm(\tgm_L;v,a) \mid \tgm_L\in\tGm_{\min}]
    \nonumber\\& \le
    \int_{t_L}^\infty
    2^{-|\pd a \cap \pd a_L^c\setminus\set{v}|}
    \P(\BB_{L+1}(t),
        \DD^2_{L+1}(t)\cap \EE_{L+1}(t)
    \mid
        Y_{t_{L}}(\pd a_{L}) = \vec{1}
    )
    dt
    \nonumber\\& =
    2^{-(k-1-m_{L+1})}
    \E\Big[
    \int_{t_L}^{T_+(a;a_{L},t_L)}
    \Ind{Y_t(\pd a_L)\in \Theta_{L+1}} dt
    \ \Big|\
        Y_{t_{L}}(\pd a_{L}) = \vec{1}
    \Big]
    ,\label{e:Eint}
\end{align}
where
\[
    \Theta_{L+1}
    \equiv \Theta_{L+1}(\tgm_L;v,a)
    \equiv
    \begin{cases}
        \set{\sigma \in \set{0,1}^{\pd a_L}:
        \sigma_{\pd a_{L} \cap \pd a
        } =\vec 1},
        &
        v \notin \pd a_{L}
        \\
        \set{\sigma \in \set{0,1}^{\pd a_L}:
            \sigma_{\pd a_{L} \cap
                \pd a \setminus\set{v}
            } = \vec 1
            ,
            \sigma_{\pd a_{L} \cap \pd a^c}
            \neq \vec 1
        },
        &
        v \in \pd a_{L}
    \end{cases}
\]
is the range of $Y_t(\pd a_L)$ restricted on $\DD^2_{L+1}(t)\cap \EE_{L+1}(t)$.
We now apply the result of Lemma~\ref{l:LSRW}, differentiating the three cases:
\begin{enumerate}[\quad 1.]
\item $v \notin \pd a_L$: Let $(\tilde{Y}_i)_{i \in \mathbb{Z}_+}$ be the skeleton chain (i.e.,~the chain that records the configuration of $Y_t$ on $\pd a_L \cap \pd a$ after every time a vertex in $\pd a_L \cap \pd a$ is updated) of the continuous time Markov chain $Y_t(\pd a_L \cap \pd a)$ starting from time $t_L$. Note that  $(\tilde{Y}_i)_{i\in \mathbb{Z}_+}$ is a lazy simple random walk on the $m_{L+1}$-dimensional hypercube and that the time between two steps of $\tilde{Y}_i$ in $Y_t(\pd a_L \cap \pd a)$ are i.i.d.\ random variables with $\textup{Exp}(m_{L+1})$ distribution.
    Therefore, we can rewrite the expectation on the RHS of \eqref{e:Eint} in terms of $\tilde{Y}_i$, namely,
\[
    \textup{RHS of }\eqref{e:Eint}
    =
    2^{-(k-1-m_{L+1})}
    \frac{1}{m_{L+1}}
    \E\Big[\sum_{0\le i\le \tilde{T}}
    \Ind{\tilde{Y}_i = \vec 1}
    \ \Big|\
    Y_0 = \vec 1
    \Big] ,
\]
where $\tilde{T}$ is the number of steps in $(\tilde{Y}_i)_{i \in \mathbb{Z}_+}$ until  every vertex of $\pd a_L \cap \pd a$ is updated at least once.
Applying Lemma~\ref{l:LSRW}, we get that
\[
    \textup{RHS of }\eqref{e:Eint}
    \le
    2^{-(k-1-m_{L+1})}
    \frac{6}{m_{L+1}} 
    .
\]
\item $v\in \pd a_L, a\neq a_L$:  Let  $(\tilde{Z}_i)_{i\in Z_+}$ be the skeleton chain of $Y_t(\pd a_L)$ starting from $t_L$ with $\tilde{Z}_0 = Y_{t_L}(\pd a_L) = \vec 1$ and $T_0 \equiv \min\set{i\ge 1: \tilde{Z}_i\neq \vec 1}$ be the time of the first $0$-update in $(\tilde{Z}_i)_{i\in Z_+}$.
By the construction of $\Theta_{L+1}$, for all $i \ge 1$,   if $\tilde{Z}_i\in \Theta_{L+1}$  then we must have that $i\ge T_0$.
For brevity of notation, let $A \equiv \pd a_{L} \cap \pd a\setminus\set{v}$ and define $\tilde{T}$ to be  the number of steps in $(\tilde Z_i)_{i\in \mathbb{Z}_+}$ until which every vertex in $\pd a_L \cap \pd a = A\cup \set{v}$ is updated at least once. Observe that $\tilde{T}$ corresponds to the deactivation time $T_+(a;a_L,t_L)$ in the original process.
By the strong Markov property and the total probability formula,
\begin{align*}
    \textup{RHS of }\eqref{e:Eint}
    &\le
    2^{-(k-1-m_{L+1})}
    \frac{1}{k}
    \E\Big[\sum_{T_0 \le i\le \tilde{T}}
        \Ind{\tilde{Z}_i(A) = \vec 1}
    \ \Big|\
    \tilde{Z}_0 = \vec 1
    \Big]
    \\ &\le
    2^{-(k-1-m_{L+1})}
    \frac{1}{k}
    \E\Big[
    \E\Big[\sum_{T_0 \le i\le \tilde{T}}
        \Ind{\tilde{Z}_i(A) = \vec 1}
    \ \Big|\
    \tilde{Z}_{T_0}(A) 
    \Big]
    \ \Big|\
    \tilde{Z}_0 = \vec 1
    \Big]
    .
\end{align*}
If $m_{L+1} = 0$, meaning that $v$ is the only vertex in the intersection of $a_L$ and $a$, then $\tilde{T}$ simply follows the exponential distribution of rate $1$ and
\[
    \textup{RHS of }\eqref{e:Eint}
    \le 1 \cdot 2^{-(k-1)}.
\]
For $m_{L+1}\ge 1$, one can  bound $\tilde{T}$ from above by $T_A + T_{v}$, where $T_A$ is  the number of steps  until every vertex in $A$ is updated at least once and
$$
    T_{v} \equiv
    \min\set{i > T_A:
        v \textup{ is updated at step }i}
    - T_A
$$
is the  number of additional steps  until $v$ is updated for the first time after time $T_A$. It follows that
\begin{equation}\label{e:ZiA}
    \sum_{T_0 \le i\le \tilde{T}}
        \Ind{\tilde{Z}_i(A) = \vec 1}
    \le
        \sum_{T_0 \le i\le T_A}
        \Ind{\tilde{Z}_i(A) = \vec 1}
    +
        \sum_{T_A < i\le T_A+T_{v}}
        \Ind{\tilde{Z}_i(A) = \vec 1}
    .
\end{equation}
For the second summation of \eqref{e:ZiA}, observe that $T_{v}$ follows the Geometric$(1/k)$ distribution and for any value of $\tilde{Z}_{T_0}(A)$ and $i\ge T_A$, we have that $\tilde{Z}_{i}(A)$ is uniformly distributed on $\set{0,1}^{A}$ with $|A| = m_{L+1}$. Therefore for all $\vec{z} \in \set{0,1}^{A}$
\[
    \E\Big[
        \sum_{T_A < i\le T_A+T_{v}}
        \Ind{\tilde{Z}_i(A) = \vec 1}
    \ \Big|\
    \tilde{Z}_{T_0}(A)=\vec{z}
    \Big]
    \le
    2^{-m_{L+1}} \E T_{v}
    = k2^{-m_{L+1}}.
\]
For the first sum on the RHS of \eqref{e:ZiA}, we split the discussion according to  whether $\tilde{Z}_{T_0} (A) = \vec 1$ or not. By the symmetry of the $k$ vertices of $\pd a_L$, we get that
$$\P(\tilde{Z}_{T_0} (A) = \vec 1) = (k-m_{L+1})/k.$$
Applying Lemma~\ref{l:LSRW} to the restriction of $\tilde{Z}_i$ to $A$ yields that
\[
\E\Big[
    \!\sum_{T_0 < i\le T_A}\!
    \Ind{\tilde{Z}_i(A) = \vec 1}
    \,\Big|\,
    \tilde{Z}_0 =\vec 1
    \Big]
    \le
    \Big[
        \frac{k-m_{L+1}}{k} \cdot
        {\Big(2+\frac{6}{m_{L+1}}\Big)}
        +
        \frac{m_{L+1}}{k} \cdot
        \frac{6}{m_{L+1}}
    \Big]
    \frac{k}{m_{L+1}}
,
\]
where the term $k/m_{L+1}$ is obtained via Wald's equation, by noting that the time between two updates in $A$ has a Geometric($m_{L+1}/k$) distribution. Combining all pieces together, we have that for all $1\le m_{L+1} \le k-1$,
\begin{align*}
    \textup{RHS of \eqref{e:Eint}}
    &\le 2^{-(k-1-m_{L+1})}
    \Big(
    \frac{1}{m_{L+1}}\frac{2(k-m_{L+1})+6}{k}
        +2^{-m_{L+1}}
    \Big)
    \\&\le 2^{-(k-1-m_{L+1})}
    \frac{k-m_{L+1}}{k}\frac{9}{m_{L+1}}
    .
\end{align*}
\item $a_L = a$: This is similar to the second case and for $k\ge 3$
\begin{align*}
    \textup{RHS of }\eqref{e:Eint}
    &\le
    \frac{1}{k}
    \E\Big[\sum_{0\le i\le \tilde{T}}
        \Ind{\tilde{Z}_i(A) = \vec 1}
    \ \Big|\
    \tilde{Z}_0 = \vec 1
    \Big]
    \le
    \frac{1}{k}
    \Big[
        2+ \frac{6}{k} + k2^{-(k-1)}
    \Big]
    \le \frac{5}{k},
\end{align*}
where the two terms in the third step are obtained from an  argument similar to \eqref{e:ZiA}.
\end{enumerate}
Those three cases conclude the proof with $C_\tm \equiv 9$.
\end{proof}

\subsection{Number of minimal type-II steps}\label{s:typeIIbranch}
In this section we prove Lemma~\ref{l:EtypeII}. Fix $\tgm_L\in \tGm_{M},a\in \bN(a_L),v\in \pd a$ and write $\tgm_{L+2}(t) \equiv \tgm^\tII_{L+2}(v,a,t)$. Recall the definition of $\MM^\tm_\ell$, $\MM^\tII_\ell$ and $\NN_L$ from Section~\ref{s:minbranch} and define $\MM^\tII_{L+2}(t)$ similarly.
By Campbell's theorem,
\begin{align*}
    \E[N^\tII_{L}\mid \tgm_{L}\in\tGm_{\min,M}]
    &=
    \E\Big[
        \sum_{t:(v,t,1)\in \xi}
        \Ind{\MM^\tII_{L+2}(t)}
    \ \Big|\
        \NN_{L}
    \Big]
    = \frac{1}{2}
    \int_{t_L}^\infty
    \P(\MM^\tII_{L+2}(t)\mid \NN_{L}, \A_{L+1}(t)) dt.
\end{align*}
The next lemma is the type-II analog of Lemma~\ref{l:pathrecur1}, the proof of which is postponed to Section~\ref{s:genGlem}, after the introduction of  relevant notations in the proof of Lemma~\ref{l:pathrecur1}.
\begin{lem} \label{l:pathrecur2}
    Under the notations above, for all $\tgm_L\in \tGm_{M},v\in V,a\in F$ and $t> t_{L}$,
\[
    \P(\MM^\tII_{L+2}(t)\mid \NN_{L}, \A_{L+1}(t))
    \le 2^{-(k-1)} \P(\tilde{\BB}_{L+2}(t)).
\]
\end{lem}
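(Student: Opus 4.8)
The plan is to prove Lemma~\ref{l:pathrecur2} by imitating the structure of the proof of Lemma~\ref{l:pathrecur1}, exploiting the fact that a type-II step is ``simpler'' than a regular minimal step: the redacted step at index $L+1$ erases all dependence on $a_{L+1}$, so the constraint $\widetilde{\DD}_{L+2}(t) = \set{Y_t(\pd a_{L+2}\setminus\set{v})=\vec 1}$ asks for \emph{all} of $\pd a_{L+2}\setminus\set v$ to be $\vec 1$ in $Y_t$, with no ``$\neq\vec 1$'' exclusion of the $\EE$-type. First I would unpack the definition
\[
    \MM^\tII_{L+2}(t) = \A_{L+2}(t)\cap \widetilde{\BB}_{L+2}(t)\cap \CC_{L+2}(t)\cap \widetilde{\DD}_{L+2}(t),
\]
condition on $\A_{L+2}(t)$ (which is the event that this particular update $(v,t,1)$ is in $\xi$, and is independent of everything about $Y$), and then drop $\widetilde{\BB}_{L+2}(t)$ back in at the very end, writing $\MM^\tII_{L+2}(t)\subseteq \widetilde{\BB}_{L+2}(t)\cap \widetilde{\DD}_{L+2}(t)\cap \CC_{L+2}(t)$. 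So the real content is to bound
\[
    \P\big(\widetilde{\DD}_{L+2}(t)\cap\CC_{L+2}(t)\ \big|\ \NN_L,\A_{L+1}(t),\widetilde{\BB}_{L+2}(t)\big)\le 2^{-(k-1)}.
\]

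The key step is the same ``every vertex of $\pd a_{L+2}\setminus\set v$ has been refreshed since time $0$ (or since its last appearance in $\tgm_L$)'' argument as in Lemma~\ref{l:pathrecur1}: the event $\CC_{L+2}(t)$ forces, for every $u\in\pd a_{L+2}$, that $T_+(u;a_r,t_r)\le t$ for $1\le r\le L$ and $T_+(u;a_{L+2},0)\le t$, i.e.\ $u$ has been updated at some time in $(s_u, t)$ where $s_u$ is the last time $u$ is ``pinned'' by $\tgm_L$. Since $Y$ refreshes each site to an i.i.d.\ Bernoulli$(1/2)$ at each clock ring, conditionally on the locations of these refresh times and on the history before them, the values $(Y_t(u))_{u\in\pd a_{L+2}\setminus\set v}$ are i.i.d.\ Bernoulli$(1/2)$ and independent of the event $\NN_L$ (which is measurable with respect to the update history on hyperedges of $\tgm_L$ up to times $\le t_L$, plus the $\EE_\ell$/$\widetilde{\BB}_\ell$ data which — crucially — only involves updates \emph{at} vertices, never values of $Y$ on $\pd a_{L+2}$ after the relevant refresh). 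Hence $\widetilde{\DD}_{L+2}(t)$ has conditional probability exactly $2^{-|\pd a_{L+2}\setminus\set v|} = 2^{-(k-1)}$, and we bound $\P(\CC_{L+2}(t)\mid\cdots)\le 1$. Reintroducing $\widetilde{\BB}_{L+2}(t)$ (which is conditionally independent of $\widetilde{\DD}_{L+2}(t)$ once we condition on the refresh structure, or can simply be pulled out since we only need an upper bound and the bound $2^{-(k-1)}$ already absorbs no $\widetilde{\BB}$ factor — we just write $\P(\widetilde{\BB}_{L+2}(t)\cap\widetilde{\DD}_{L+2}(t)\mid\cdots)\le 2^{-(k-1)}\P(\widetilde{\BB}_{L+2}(t)\mid\cdots)$ after checking $\widetilde{\DD}$ is conditionally independent of $\widetilde{\BB}$ given the refresh times) gives the claim.

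The main obstacle — exactly as flagged in the discussion preceding Lemma~\ref{l:pathrecur1} — is the conditioning subtlety: the event $\NN_L = \set{\tgm_L\in\tGm}$ depends, through the events $\EE_\ell$ and $\widetilde{\BB}_\ell$ for $\ell\le L$, on updates at vertices \emph{after} those vertices last appeared in hyperedges of $\tgm_L$, and in particular could in principle couple with the refresh times of vertices in $\pd a_{L+2}$ and with $Y_t(\pd a_{L+2})$. I would handle this with the same positive-correlation / monotone-coupling device used in Section~\ref{s:genGlem} for Lemma~\ref{l:pathrecur1}: reveal the Poisson clocks and Bernoulli marks in an order that first exposes everything $\NN_L$ and $\widetilde{\BB}_{L+2}(t)$ depend on \emph{except} the values of the marks at the ``final'' refresh of each $u\in\pd a_{L+2}\setminus\set v$ before time $t$, observe that $\NN_L\cap\widetilde{\BB}_{L+2}(t)\cap\CC_{L+2}(t)$ is measurable with respect to this revealed information together with the \emph{times} (not values) of those final marks, and that the event $\widetilde{\DD}_{L+2}(t)$ depends only on those final mark \emph{values}, which are fresh i.i.d.\ Bernoulli$(1/2)$'s — hence conditionally independent with the stated probability. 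Because the type-II step has no $\EE$-exclusion on $\pd a_{L+2}$ and we only need an \emph{upper} bound, this is strictly easier than the corresponding part of Lemma~\ref{l:pathrecur1}, so I would expect the proof to reduce to citing the construction of Section~\ref{s:genGlem} with $\pd a_L$ replaced by $\pd a_{L+2}$ and the $\EE$/$\DD^2$ bookkeeping deleted.
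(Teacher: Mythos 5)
Your overall plan is the paper's: its proof of Lemma~\ref{l:pathrecur2} is precisely ``apply the reasoning of Lemma~\ref{l:pathrecur1}'', after splitting $\widetilde{\DD}_{L+2}$ into its parts on $\pd a\setminus\pd a_L$ and $\pd a\cap\pd a_L$ with the companion events $\CC'_{L+2},\CC''_{L+2}$, and keeping the factor $\P(\tilde{\BB}_{L+2}(t))$ to be integrated later. The gap is in how you justify the key step. You assert that one can reveal the randomness in an order so that $\NN_L\cap\tilde{\BB}_{L+2}(t)\cap\CC_{L+2}(t)$ is measurable given everything \emph{except} the values of the final marks at the vertices $u\in\pd a_{L+2}\setminus\set{v}$ before time $t$ (equivalently, your parenthetical claim that the $\EE_\ell/\tilde{\BB}_\ell$ data in $\NN_L$ ``never involves values of $Y$ on $\pd a_{L+2}$ after the relevant refresh''). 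This is false in general. Take $\ell\in I_\EE$: a vertex $u\in\pd a_{L+2}\cap\pd a_{\ell-1}$ whose last appearance in $\tgm_L$ is at step $\ell-1$, with $\EE_\ell=\set{Y_{t_\ell}(\pd a_{\ell-1}\setminus\pd a_\ell)\neq \vec 1}$ nontrivial. If the only update of $u$ in $(t_{\ell-1},t)$ falls in $(t_{\ell-1},t_\ell)$ --- a scenario fully compatible with $\CC_{L+2}(t)$, which only demands some update of $u$ in $(t_{\ell-1},t)$ --- then the \emph{value} of that final mark determines both $Y_{t_\ell}(u)$ (hence can decide $\EE_\ell$, hence $\NN_L$) and $Y_t(u)$ (hence $\widetilde{\DD}_{L+2}(t)$); the analogous issue arises for $\tilde{\BB}_\ell$, $\ell\in I_\BB$. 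So exact conditional independence is not available, and no revealing order can deliver it. The paper's ``device'' is genuinely an inequality, not an independence statement: partition each problematic $\EE_\ell$ into $\EE^0_\ell$ and $\EE^1_\ell\setminus\EE^0_\ell$ (and each $\tilde{\BB}_\ell$ into fibres over $\xi^\circ(\tilde{W}_\ell)\in\Xi_\ell(\tilde{W}_\ell)$), bound the conditional probability by a supremum over the partition, and then use monotonicity of $Y$ --- conditioning on $\set{Y_{t_\ell}(W_\ell)\neq\vec 1}$ can only \emph{decrease} the probability of $\set{Y_t(W_\ell)=\vec 1}$ --- to obtain $2^{-(k-1)}$ as an upper bound. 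Relatedly, your remark that the ``$\EE$ bookkeeping'' can be deleted is correct only for the new step (a type-II step has no $\EE_{L+2}$); the events $\EE_\ell,\tilde{\BB}_\ell$ with $\ell\le L$ sitting inside $\NN_L$ still have to be processed exactly as above.

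A secondary point: as written your argument yields $2^{-(k-1)}\P(\tilde{\BB}_{L+2}(t)\mid \NN_L,\A)$, while the statement (and its use in Lemma~\ref{l:EtypeII}, where $\P(\tilde{\BB}_{L+2}(t))$ is integrated as a coupon-collector tail) requires the unconditional factor. In the paper this comes out automatically because, after the reduction, the residual conditioning ($\NN^\circ_L$, the $\EE^1_\ell$, the $\xi^\circ(\tilde{W}_\ell)$) is measurable with respect to the updates up to time $t_L$, whereas $\tilde{\BB}_{L+2}(t)$ is a function of the unmarked updates on $\pd a_L\cup\pd a_{L+2}$ after time $t_L$; moreover the conditioned update $(v,t,1)$ can only shrink deactivation times and hence cannot increase $\P(\tilde{\BB}_{L+2}(t))$. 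You should make this decoupling explicit rather than simply ``pulling $\tilde{\BB}$ out''.
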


\begin{proof}[Proof of Lemma~\ref{l:EtypeII}]
By Lemma~\ref{l:EtypeII} and  Campbell's theorem,
\begin{align*}
    \E[N^\tII_{L}\mid \tgm_{L}\in\tGm_{\min}]
    &\le 2^{-k}
    \int_{t_L}^{\infty}
    \P(\tilde{\BB}_{L+2}(t)) dt
    = 2^{-k}
    \E[ T_+(a;T_+(a_{L};t_L))-t_{L}]
    \\&=
    2^{-k}
    \cdot
    2\E[T_+(a_L;t_L)-t_{L}] = 2^{-(k-1)}\sum_{1=1}^{k}\frac{1}{i}  \le 2^{-(k-1)} (1+\log k),
\end{align*}
where we have used the fact that the coupon collector time of $k$ coupons is $\sum_{1=1}^{k}\frac{1}{i} $.

\end{proof}
\subsection{Remaining Lemmas}\label{s:genGlem}
In this subsection we complete the proof of Lemmas~\ref{l:pathrecur1}, ~\ref{l:LSRW} and~\ref{l:pathrecur2}.
\begin{proof}[Proof of Lemma~\ref{l:pathrecur1}]
For each $u\in \pd a \cup \pd a_L$, let
\[
    \ell_-(u) \equiv
    0\vee\max\set{1\le \ell\le L:
        \ell\notin I_\star, u\in \pd a_\ell}
\]
be the last step in $\tilde \gamma_{{L+1}}$ such that the corresponding hyperedge contains $u$ and let $t_-(u) \equiv t_{\ell_-(u)}$ be the time of that step.
In particular, if $u$ has never appeared in the previous steps, then $t_{-}(u) =0$.
We consider the set
$
    S_{L+1} \equiv
    \cup_{u\in \pd a\cup \pd a_{L}}
    \set{u} \times {(} t_-(u), \infty)
    ,
$
and (recalling $Y_0(V)=\vec{1}$) let
\[
    \FF_{L+1} \equiv \FF(\tV \setminus S_{L+1})
    \equiv \sigma(\xi(\tV\setminus S_{L+1}))
\]
denote the sigma-field generated by each of the vertices in $\pd a_{L}\cup\pd a$  after time $0$ or its last appearance in $\tgm_{L+1}$ before $t$. Recall that $\NN_L
= \set{\tgm_{L} \in \tGm}$. Let
\[
    \NN^\circ_{L}\equiv\set{Y_{t_\ell}(\pd a_\ell) = \vec 1, \textup{ for all } \ell\in [L]\setminus I_\star  }\supseteq \NN_{L}.
\]
By the Markov property of process $Y_t$,
the event $\MM'_{L+1}$, defined by substituting the event $\CC_{L+1}$ in the definition of $\MM^\tm_{L+1}$ with $\CC'_{L+1}$  from \eqref{e:CCprime}, is independent of $\FF_{{L+1}}$ given $\NN^\circ_{L}$.

Meanwhile, for each $\ell\in \Im$, the first five events in the definition of $\MM^\tm_\ell$ are measurable w.r.t.\ $\FF_{L+1}$ while $\EE_\ell$ might depend also on the updates of $\xi$ in $\pd a_{\ell-1}\times [t_{\ell-1},t_\ell]$. In particular, $\EE_\ell$ is not $\FF_{L+1}$-measurable if and only if
\begin{equation}\label{e:D3r}
    \ell\in I_\EE
    \equiv \big\{\ell\in \Im:
    \EE_\ell \neq \varnothing^c
    \textup{ and }
    \exists u\in (\pd a\cap \pd a_{L}^c )\setminus \set{v},
    \ell-1 = \ell_-(u)
    \big\}
    .
\end{equation}
For each $\ell\in I_2$, the events $\A_{\ell}, \CC_{\ell}$ and $\tilde{\DD}_\ell$ are measurable w.r.t.\ $\FF_{L+1}$ while $\tilde{B}_\ell$ might depend on the updates of $\xi$ in $\pd a_{\ell-2} \times [t_{\ell-2},t_{\ell}]$. More specifically, $\tilde{B}_\ell$ is not $\FF_{L+1}$-measurable if and only if
\begin{equation}\label{e:tildeBB}
    \ell\in I_\BB
    \equiv \big\{\ell\in I_2:
    \exists u\in (\pd a\cap \pd a_{L}^c )\setminus \set{v},
    \ell-2 = \ell_-(u)
    \big\}
    .
\end{equation}
Let
\begin{equation*}
    \MM^\textup{m,F}_\ell
    \equiv
    \begin{cases}
    \MM^\tm_\ell
    & \ell \in \Im\setminus I_\EE,\\
    \A_\ell \cap \BB_\ell \cap \CC_\ell
    \cap \DD^1_\ell \cap \DD^2_\ell
    &\ell \in I_\EE
    \end{cases}
    ,\quad
    \MM^\textup{II,F}_\ell
    \equiv
    \begin{cases}
    \MM^\tII_\ell
    & \ell \in I_2\setminus I_\BB,\\
    \A_\ell \cap \CC_\ell \cap \tilde{\DD}_\ell
    &\ell \in I_\BB,
    \end{cases}
    .
\end{equation*}
be the $\FF_{L+1}$-measurable part of $\MM^\tm_\ell$ and $\MM^\tII_\ell$ and let
$
    \NN^\textup{F}_L \equiv
    (
        \cap_{\ell \in \Im} \MM^\textup{m,F}_{\ell}
    ) \cap (
        \cap_{\ell \in I_2} \MM^\textup{II,F}_{\ell}
    ).
$
We have
\[
    \P(\MM^\tm_{L+1}\mid \NN_L,\A_{L+1})
    \le \P(\MM'_{L+1}\mid \NN_L,\A_{L+1})
    = \P(\MM'_{L+1} \mid
        \NN^\textup{F}_{L},
        \A_{L+1},
        \cap_{\ell\in I_\EE} \EE_\ell,
        \cap_{\ell\in I_\BB} \BB_\ell
    ).
\]

To further simplify the conditioning part of the probability, we partition the events $\set{\EE_{\ell}}_{\ell \in I_\EE}$ and $\set{\tilde{\BB}_\ell}_{\ell\in I_\BB}$ into subsets such that each subset can be represented as the intersection of some $\FF_{L+1}$-measurable event and $\FF_{L+1}$-conditionally-independent event:
\begin{enumerate}[\quad 1.]
\item For each $\ell\in I_\EE$, we split $\pd a_{\ell-1}\cap \pd a_\ell^c$ into the  non-intersecting  union of
\[
    W_\ell \equiv
    \set{u\in \pd a_{\ell-1} \cap \pd a_\ell^c:
        u\in (\pd a\cap \pd a_{L}^c) \setminus \set{v},
        \ell_-(u) = \ell-1
    }
\]
and $ V_\ell \equiv (\pd a_{\ell-1} \cap \pd a_\ell^c)
\setminus W_\ell$.
It follows from the definition of $V_{\ell}$ that $Y_{t_\ell}(V_\ell)$ is $\FF_{L+1}$-measurable and $Y_{t_\ell}(W_\ell)$ is independent of $\FF_{L+1}$ conditioned on $\NN^\circ_{L}$. Let $\EE^0_\ell \equiv \set{Y_{t_\ell}(V_\ell) \neq \vec 1}$ and $\EE^1_\ell \equiv \set{Y_{t_\ell}(W_\ell) \neq \vec 1}$. Then  $\EE_\ell$ can be partitioned into the events $\EE^0_\ell$ and $\EE^1_\ell\setminus \EE^0_\ell = \EE^1 \cap (\EE^0)^c$.
\item For each $\ell\in I_\BB$, we similarly split $\pd a_{\ell-2} \cup \pd a_\ell$ into the  non-intersecting  union of
\[
    W_\ell \equiv
    \set{u\in \pd a_{\ell-2} \cap \pd a_\ell^c:
        u\in (\pd a\cap \pd a_{L}^c) \setminus \set{v},
        \ell_-(u) = \ell-2
    }
\]
and $ V_\ell \equiv (\pd a_{\ell-2} \cap \pd a_\ell^c) \setminus W_\ell$, and define $\tilde{W}_\ell \equiv W_\ell \times (t_{\ell-2},t_{\ell})$, $\tilde{V}_\ell \equiv V_\ell \times (t_{\ell-2},t_{\ell})$.  Recall that $\xi^\circ$ is the unmarked update sequence, i.e.,\ $(v,t)\in \xi^\circ$ if and only if $(v,t,1)\in \xi$ or $(v,t,0)\in \xi$.
The event $\tilde{\BB}_\ell$ is measurable w.r.t.\ the sigma field generated by $ \xi^\circ(\tilde{W}_\ell \cup\tilde{V}_\ell) = \xi^\circ(\tilde{W}_\ell)\times\xi^\circ(\tilde{V}_\ell)$.
More specifically, let $\Xi_\ell(\tilde{W}_\ell)$ be the  set of possible configurations of $\xi^\circ(\tilde{W}_\ell)$ in event $\tilde{\BB}_\ell$. Then
$\tilde{\BB}_\ell$ can be written as
\[
    \tilde{\BB}_\ell
    = \cup_{\xi^\circ(\tilde{W}_\ell)\in\Xi_\ell(\tilde{W}_\ell) }
    \set{\xi^\circ(\tilde{W}_\ell)}
    \times
    \set{\xi^\circ(\tilde{V}_\ell):
        \xi^\circ(\tilde{W}_\ell \cup\tilde{V}_\ell)
        \in \tilde{\BB}_\ell}
    ,
\]
where
$\xi^\circ(\tilde{W}_\ell)$ is independent of $\FF_{L+1}$ and
$\xi^\circ(\tilde{V}_\ell)$ is $\FF_{L+1}$-measurable.
\end{enumerate}
By the fundamental formula of total probability, for any two events $A,B$ and partition of $A$ into disjoint sets $A = \cup_{i=1}^n A_i$, we have
\[
    \P(B\mid A) = \sum_{i=1}^n \P(B\mid A_i) \P(A_i\mid A)
    \le \sup_{1\le i\le n} \P(B\mid A_i).
\]
Applying the same argument to the aforementioned partitions of $\EE_\ell$ and $\tilde{\BB}_\ell$, we have
\begin{align}
    &\P(\MM'_{L+1} \mid
        \NN^\textup{F}_{L},
        \A_{L+1},
        \cap_{\ell\in I_\EE} \EE_\ell,
        \cap_{\ell\in I_\BB} \BB_\ell
    )
    \nonumber
    \\& \le
    \sup_{\substack{
    \xi^\circ(\tilde{W}_\ell\cup\tilde{V}_\ell)
    \in \tilde{\BB}_\ell,
    \\
    \ell\in I_\BB
    ;
    I'_\EE\subseteq I_\EE
    }}
    \P\bigg(\MM'_{L+1} \,\bigg\vert\,
        \NN^\textup{F}_{L},
        \A_{L+1},
        \bigcap_{\ell\in I'_\EE}
        [\EE^1_\ell\cap (\EE^0_\ell)^c],
        \bigcap_{\ell\in I_\EE\setminus I'_\EE}
        \EE^0_\ell,
        \big\{\xi^\circ(\tilde{W}_\ell),
            \xi^\circ(\tilde{V}_\ell)
        \big\}_{\ell\in I_\BB}
    \bigg)
    \nonumber\\&=
    \sup_{\substack{
    \xi^\circ(\tilde{W}_\ell)
    \in \Xi_\ell(\tilde{W}_\ell)
    ,\\
    \ell\in I_\BB
    ;
    I'_\EE\subseteq I_\EE
    }}
    \P(\MM'_{L+1} \mid
        \NN^\circ_{L},
        \A_{L+1},
        \cap_{\ell\in I'_\EE}
        \EE^1_\ell,
        \set{\xi^\circ(\tilde{W}_\ell)
        }_{\ell\in I_\BB}
    )
    \nonumber\\&\le
    \sup_{\substack{
    \xi^\circ(\tilde{W}_\ell)
    \in \Xi_\ell(\tilde{W}_\ell)
    ,\\
    \ell\in I_\BB
    ;
    I'_\EE\subseteq I_\EE
    }}
    \P(\DD^1_{L+1} \mid
        \NN^\circ_{L},
        \CC'_{L+1},
        \cap_{\ell\in I'_\EE} \EE^1_\ell,
        \set{\xi^\circ(\tilde{W}_\ell)
        }_{\ell\in I_\BB}
    )
    \cdot
    \P(\BB_{L+1},\DD^2_{L+1},\EE_{L+1}
        \mid
        \NN^\circ_{L}
    )
    ,\label{e:condindep}
\end{align}
where the penultimate step uses
the conditional independency of $\MM'_{L+1}$ and $\FF_{L+1}$ given $\NN^\circ_L$,
and the last step uses the  independence of updates on $\pd a_L \times (t_L,\infty)$ and $\cup_{u\in\pd a\setminus \pd a_L} \set{u}\times (t_-(u),\infty)$.

To conclude the proof, we show that the first probability in \eqref{e:condindep} is uniformly bounded by $2^{-|\pd a \cap \pd a_L^c\setminus \set{v}|}$. Recall the definition of $W_\ell$ for each $\ell\in I_\EE \cup I_\BB$. For any $I'_\EE\subseteq I_\EE$ and $\xi^\circ(\tilde{W}_\ell) \in \Xi_\ell(\tilde{W}_\ell), \ell\in I_\BB$,  we define
\[
    W_0 \equiv W_0(I'_\EE)
    \equiv
        (\pd a\cap \pd a_{L}^c \setminus \set{v})
        \setminus (\cup_{\ell\in I'_\EE \cup I_\BB} W_\ell)
    .
\]
The events $\set{W_\ell}_{\ell\in I'_\EE \cup I_\BB \cup \set{0}}$ form a partition of set $\pd a\cap \pd a_{L}^c \setminus \set{v}$. It follows that
\begin{align*}
    \hspace{1cm}&\hspace{-1cm}
    \P(\DD^1_{L+1} \mid
        \NN^\circ_{L},
        \CC'_{L+1},
        \cap_{\ell\in I'_\EE} \EE^1_\ell,
        \set{\xi^\circ(\tilde{W}_\ell)
        }_{\ell\in I_\BB}
    )
    \\& =
    \prod_{\ell \in I'_\EE}
    \P(
        Y_{t}(W_\ell) = \vec 1
    \mid
        Y_{t_{\ell-1}}(W_\ell) = \vec 1,
        Y_{t_\ell}(W_\ell) \neq \vec 1
        ,
        T_+(a;a_{\ell-1}, t_{\ell-1}) < t
    )
    \\&
    \cdot\ \ \prod_{\ell \in I_\BB}
    \P(
        Y_{t}(W_\ell) = \vec 1
    \mid
        Y_{t_{\ell-2}}(W_\ell) = \vec 1,
        \xi^\circ(\tilde{W}_\ell)
        ,
        T_+(a; a_{\ell-2},t_{\ell-2}) < t
    )
    \\&
    \cdot \prod_{u\in W_0(I'_\EE)}
    \P(
        Y_{t}(u) = \vec 1
    \mid
        Y_{t_-(u)}(u) = \vec 1,
        T_+(u;t_-(u)) < t
    )
    .
\end{align*}
For each probability in the first product,
the monotonicity of the process $Y_t$ implies that
removing the condition of $Y_{t}(W_\ell)\neq \vec{1}$ will only increase its value. Thus
\[
    \P(\DD^1_{L+1} \mid
        \NN^\circ_{L},
        \CC'_{L+1},
        \cap_{\ell\in I'_\EE} \EE^1_\ell,
        \set{\xi^\circ(\tilde{U}_\ell)
        }_{\ell\in I_\BB}
    )
    \le \prod_{\ell\in I'_\EE \cup I_\BB \cup \set{0}}
   2^{-|U_\ell|}
    = 2^{-|\pd a \cap \pd a_L^c\setminus \set{v}|}
    .
\]
Plugging the last equation back into \eqref{e:condindep} concludes the proof.
\end{proof}

\begin{proof} [Proof of Lemma~\ref{l:LSRW}]
Denote $\vec{1} \equiv (1,1,\ldots,1)$ and  $z=(0,1,1,\ldots,1)$. We first explain how the case $Z_0 =\vec{1}  $ implies all other cases:

\bpp
For each $v \in \{0,1\}^m $, let $\mathcal{E}_v\equiv \E_{v}[|\{0 \le t \le T_+ :Z_t=\vec{1} \} |]$ be the duration of time that the process $Z_t$, starting from $v$, stays at state $\vec{1}$ before $T_+$.
By symmetry and monotonicity, $z$ achieves the maximum of $\mathcal{E}_v$ over  $v\in\set{0,1}^m\setminus\set{\vec{1}}$ (along with other maximizers). \ehl 
Let $T_+'$ be the first time by which every coordinate, apart perhaps from the first one, is updated. Denote $\mathcal{E} \equiv \mathcal{E}_{\vec{1}}$.   Then by first step analysis and symmetry
\[ \mathcal{E}=1+\frac{1}{2} \mathcal{E}_{z}+\frac{1}{2} \mathcal{E}', \quad \text{where} \quad \mathcal{E}' \equiv \E_{\vec{1}}[|\{0 \le t \le T_+' :Z_t=\vec{1} \} |].   \]
Note that $\{T_+' \neq T_+\}$ is precisely the event that the first coordinate is the last one to be updated. Given  $T_+' \neq T_+$ we have that  $T_+' - T_+$ has a Geometric($1/m$) distribution and that at each step $t$ between $T_+ $ and $T_+'$ the probability that $Z_t= \vec{1} $ is $2^{-(m-1)}$. Thus \[\mathcal{E}=\mathcal{E}'+\P_{\vec{1}}[\{T_+' \neq T_+\}]m 2^{-(m-1)}=\mathcal{E}'+2^{-(m-1)}. \]
Plugging this identity above yields that $\mathcal{E}_{z}=\mathcal{E}-2+2^{-(m-1)} $.

We now treat the case $Z_0=(1,1,\ldots,1)$. Note that after precisely $i$ coordinates have already been updated, the probability that the chain is at $\vec 1 $  is $2^{-i}$. The number of such steps follows a Geometric distribution with parameter $(m-i)/m$. Thus the desired expectation is
\[\mathcal{E}   = \sum_{i=0}^{m-1} \frac{2^{-i}m}{m-i}=m2^{-m}\sum_{i=1}^{m}\frac{2^{i}}{i}. \]
We now proceed to give an upper-bound on $\mathcal{E}$.  Let $(c_i)_{i\in\mathbb{Z_+}}$ be a sequence of real numbers and denote $d_i\equiv 2^{i}c_i $. Recall that by Abel's summation by parts formula, using the fact that $2^{i+1}-2^{i}=2^i $, we get that  for any integers $n_2\ge n_1\ge 0$,
$$\sum_{i=n_1}^{n_2}d_{i}=(d_{n_1}-d_{n_1+1}) +2d_{n_2}+\sum_{i=n_1+1}^{n_2-1}2^{i+1}(c_i-c_{i+1}) .$$
Applying the Abel's summation formula repeatedly (and noting that at each iteration the first and second term cancel out) yields that
\begin{align*}
    \bpp\sum_{i=1}^{m}\ehl\frac{2^{i}}{i}
&=\frac{2^{m+1}}{m}+\sum_{i=2}^{m-1} \frac{2^{i+1}}{i(i+1)}=\frac{2^{m+1}}{m}+\frac{2^{m+1}}{m(m-1)}+\sum_{i=3}^{m-2} \frac{2^{i+2}(2!)}{i(i+1)(i+2)}
\\&= \cdots =
\frac{2^{m}}{m}\sum_{j=0}^{\lceil m/2 \rceil -1}\frac{2}{\binom{m-1}{j}}  +\sum_{i=\lceil m/2 \rceil}^{\lfloor m/2 \rfloor +1}\frac{2^{i+\lceil m/2 \rceil -1}[(\lceil m/2 \rceil-1)!]}{i(i+1)\ldots (i+\lceil m/2 \rceil -1)}
\\& \le \frac{2^{m}}{m}
\bigg(2^{-\lceil m/2 \rceil+1_{m \in 2 \N }} + \sum_{j=0}^{\lceil m/2 \rceil -1}2\binom{m-1}{j}^{-1} \bigg)
.
\end{align*}
This yields that $\mathcal{E} \le 2 \big(\sum_{j=0}^{\lceil m/2 \rceil -1}\binom{m-1}{j}^{-1} \big)  +2^{-\lceil m/2 \rceil+1_{m \in 2 \N }}   $. Checking each case separately, it is not hard to  to verify that  for $m <7$ we have that $\mathcal{E}   \le 2+ \frac{6}{m} -2^{-(m-1)} $, whereas if $m \ge 7$ we have that $\mathcal{E}  \le 2+ \frac{2}{m-1}+\frac{10}{(m-1)(m-2)}+2^{-\lceil m/2 \rceil+1_{m \in 2 \N }} \le 2+ \frac{6}{m}-2^{-(m-1)}    $.
\end{proof}

\begin{proof}[Proof of Lemma~\ref{l:pathrecur2}]
Recall that for $v \in V$, $b \in F$ and $s \ge 0$, we have that $T_+(v;b,s) = s+ [T_+(v;s)-s]\cdot \Ind{v \in \pd b}$.
Similarly to the construction of \eqref{e:CCprime}, we define
\begin{align*}
    \CC'_{L+2}(t)
    &\equiv
    \bigcap_{u\in \pd a\setminus \pd a_L}
    \Big[
    \set{T_+(u;0)\le t}
    \cap
    \big(\cap_{\ell=1}^{L}
    \big\{T_+(u; a_\ell,t_\ell )\le t \big\}
    \big)
    \Big],
    \\
    \CC''_{L+2}(t)
    &\equiv
    \bigcap_{u\in \pd a \cap \pd a_L\setminus \set{v}}
    \big(\cap_{\ell=1}^L
    \set{T_+(u;a_\ell,t_\ell)\le t} \big)
\end{align*}
and
\[
    \tilde{\DD}^1_{L+2}(t)
    \equiv \set{Y_t(\pd a \cap \pd a_L^c\setminus \set{v})=\vec 1}
    ,\quad
    \tilde{\DD}^2_{L+2}(t)
    \equiv \set{Y_t(\pd a \cap \pd a_L\setminus \set{v})=\vec 1}.
\]
For every $u\in \pd a\cap\pd a_L$, the event $\set{T_+(u;0)\le t}$ is implied by $\set{T_+(u;a_L,t_L)\le t}$.
In particular, we have that $\CC_{L+2}(t) \subseteq \CC'_{L+2}(t) \cap \CC''_{L+2}(t)$.
Fix the choice of $t$ and suppress it from the notation.
Applying similar reasoning as in the proof of Lemma~\ref{l:pathrecur1} (and using the notation from that proof), we get that
\begin{align*}
    &\P(\MM^\tII_{L+2}\mid \NN_{L}, \A_{L+1})
    \\&\le \sup_{\substack{
    \xi^\circ(\tilde{W}_\ell)
    \in \Xi_\ell(\tilde{W}_\ell)
    ,\\
    \ell\in I_\BB
    ;
    I'_\EE\subseteq I_\EE
    }}
    \P(
        \tilde{\BB}_{L+2},
        \CC_{L+2},
        \tilde{\DD}^1_{L+2},
        \tilde{\DD}^2_{L+2}
    \mid
        \NN^\circ_{L},
        \cap_{\ell\in I'_\EE} \EE^1_\ell,
        \set{\xi^\circ(\tilde{W}_\ell)
        }_{\ell\in I_\BB}
    )
    \\&\le
    \P(\tilde{\DD}^1_{L+2}\mid
        \tilde{\BB}_{L+2},
        \CC'_{L+2}
    )
    \cdot
    \P( \tilde{\DD}^2_{L+2}
        \mid
        \tilde{\BB}_{L+2},
        \CC''_{L+2})
    \cdot
    \P(\tilde{\BB}_{L+2})
    = 2^{-(k-1)}
    \P(\tilde{\BB}_{L+2}).
\end{align*}
\end{proof}
\section{Random regular hypergraph}\label{s:regularG}
In this section we exploit the locally tree-like geometry of random regular hypergraphs and prove Theorem~\ref{t:mixing_regular}. For two vertices $v,v'\in V$, we define the distance $d(v,v')$ as the number of hyperedges on the shortest path from $v$ to $v'$. The  property we will need is the following.
\begin{dfn}
    For each $R\ge 1$, we say that $G$ is $R$-good if for every $v\in V$, the $R$-neighbourhood of $v$ (as a subgraph of the bipartite graph representation of $G$) contains at most one cycle.
\end{dfn}
A similar argument as the proof for random regular graph (i.e.,\ $k=2$) in \cite{lubetzky2010cutoff} yields the following.
\begin{ppn}[cf.\ \cite{lubetzky2010cutoff} Lemma 2.1]\label{p:Rgood}
    For any constant $R\ge 0$ and $G\sim \mathcal{H}(n,d,k)$.
\[
    \lim_{n\to\infty} \P(\textup{$G$ is $R$-good}) = 1.
\]
\end{ppn}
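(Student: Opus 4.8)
The plan is to adapt the standard first-moment argument for random regular graphs (Lemma~2.1 of~\cite{lubetzky2010cutoff}) to the bipartite representation $G=(V,F,E)$, treating each hyperedge $a\in F$ as an auxiliary node of degree $k$. Since $d$ and $k$ are held fixed while $n\to\infty$, the radius $R$ is a constant, so the $R$-neighbourhood $B_R(v)$ of any $v\in V$ in the bipartite graph has at most a constant number $N_0=N_0(d,k,R)$ of nodes. First I would pass from $\mathcal{H}(n,d,k)$ to the pairing (configuration) model: equip each $v\in V$ with $d$ half-edges and each of the $nd/k$ hyperedges with $k$ slots, and take a uniformly random perfect bijection between the $nd$ half-edges and the $nd$ slots. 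Conditioned on the resulting multi-hypergraph being simple this model is exactly $\mathcal{H}(n,d,k)$, and for fixed $d,k$ the simplicity probability is bounded away from $0$; hence it suffices to prove $\P(G\text{ not }R\text{-good})\to0$ in the pairing model. Finally, observe that $G$ is $R$-good precisely when $\beta(B_R(v))\le1$ for all $v$, where $\beta$ is the first Betti number (edges minus nodes plus one, for a connected graph).

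The core step is a first-moment bound on the number of vertices $v$ with $\beta(B_R(v))\ge2$. If $\beta(B_R(v))\ge2$, then $B_R(v)$ contains a connected bipartite subgraph $H$ with $\beta(H)\ge2$ and at most $N_0$ nodes (one may take $B_R(v)$ itself, or a minimal such sub-structure); let $\mathcal{T}$ be the finite, $n$-independent set of isomorphism types of such graphs with $V$-node degrees at most $d$ and $F$-node degrees at most $k$. For a fixed type $H\in\mathcal{T}$ with $a$ nodes of $V$-type, $b$ nodes of $F$-type and $e=a+b-1+\beta(H)\ge a+b+1$ edges, the expected number of embedded copies of $H$ in the pairing model satisfies
\[
    \E[\#\text{copies of }H]
    \ \le\ C_H\, n^{a}\Big(\tfrac{nd}{k}\Big)^{b}\frac{(nd-e)!}{(nd)!}
    \ =\ O\!\big(n^{\,a+b-e}\big)\ =\ O(n^{-1}),
\]
since assigning the $a+b$ nodes of $H$ to actual vertices/hyperedges costs a factor $O(n^{a+b})$, assigning the half-edges and slots used by the $e$ edges within the chosen nodes costs an $n$-independent constant $C_H\le(dk)^{N_0\max(d,k)}$, and forcing $e$ disjoint pairings into the uniform bijection has probability $\frac{(nd-e)!}{(nd)!}=O(n^{-e})$. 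Summing over the finitely many types in $\mathcal{T}$ and applying Markov's inequality shows $\P(G\text{ not }R\text{-good})=o(1)$ in the pairing model, hence also under $\mathcal{H}(n,d,k)$ since $\P_{\mathcal{H}(n,d,k)}(\cdot)\le\P_{\text{pairing}}(\cdot)/\P_{\text{pairing}}(\text{simple})$.

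An equivalent route is to explore $B_R(v)$ by breadth-first search in the pairing model, revealing one pairing at a time: each of the $O(N_0)$ revealed pairings lands on an already-explored node (a ``collision'') with probability $O(N_0/n)=O(1/n)$, and each collision raises $\beta(B_R(v))$ by at most $1$, so $\P(\beta(B_R(v))\ge2)\le\binom{O(N_0)}{2}O(n^{-2})=O(n^{-2})$, and a union bound over $v$ again gives $o(1)$. Either way, the only mildly delicate point is the bookkeeping of the per-node half-edge/slot choices and the prescribed-pairing probabilities in the hypergraph pairing model, so that the exponent of $n$ comes out to exactly $a+b-e=1-\beta(H)\le-1$; this is completely parallel to the $k=2$ case treated in~\cite{lubetzky2010cutoff}, with the nodes of $F$ playing the role of extra degree-$k$ vertices, and I do not expect any genuine obstacle beyond this routine adaptation.
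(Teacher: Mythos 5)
Your argument is correct and is exactly the route the paper intends: the paper gives no detailed proof, deferring to the standard configuration-model argument for random regular graphs (\cite{lubetzky2010cutoff}, Lemma 2.1), and your adaptation — pairing model with $d$ half-edges per vertex and $k$ slots per hyperedge, contiguity via the bounded-away-from-zero simplicity probability, and a first-moment (or equivalently BFS-collision) bound giving $O(n^{a+b-e})=O(n^{1-\beta})=O(n^{-1})$ per isomorphism type with $\beta\ge 2$ — is precisely that adaptation. No gaps beyond the routine bookkeeping you already flag.
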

In the remainder of the section we are going to fix $R_\star = R_\star(\Delta,k)\ge 2$ to be a constant to be determined later and restrict our attention to the following subset of $n$-vertex hypergraphs:
\[
    \mathcal{G}\equiv \mathcal{G}_n(R_\star)
    \equiv
    \set{G: G\textup{ is $\Delta$-regular, $k$-uniform and $R_\star$-good}}.
\]
\subsection{Projected path}
We define for every $a\in F$ the subset
\[
    \cyc(a)\equiv\set{v\in\pd a:
    \textup{$v$ is contained in a cycle
        shorter than }
    2R_\star
    }
    \subseteq \pd a,
\]
where the length of cycle is the number of vertices along the cycle.  For each $G\in\mathcal{G}$,  the definition of $R_\star$-good implies that $|\cyc(a)|\le 2$ for all $a\in F$. In particular, for every $a\in F$ there is at most one $a'\in F$ such that $|\pd a \cap \pd a'|\ge 2$, in which case $\cyc(a) = \cyc(a') = \pd a\cap \pd a'$.

For each discrepancy sequence $\zeta = ((u_i,b_i,s_i))_{0\le i\le M}$, let $\gamma_L\equiv \gamma_{L(\zeta)}(\zeta) \equiv ((v_\ell,a_\ell,t_\ell))_{0\le \ell\le L} $ be the minimal path constructed according to the proof of Lemma~\ref{l:reduction} and for each $1\le \ell \le L$, let $i(\ell)$ be the step it corresponds to (via \eqref{e:EEreason}). From the construction of Lemma~\ref{l:reduction}, we can observe that for each $(v_\ell,a_\ell,t_\ell)$, $1\le \ell\le L$,
there exists an alternating sequence of vertices and hyperedges  and an increasing subsequence of indices
\[
   (\ta_0\tv_0\ta_1\tv_1\cdots\ta_{m}\tv_m),\quad
   i(\ell-1) = j_0 < j_1<\cdots<j_m = i(\ell)
\]
that ``represents'' a subsequence in $\zeta$, i.e.,\ it satisfies
\begin{equation}\label{e:updatecycle}
    \tv_r \in \pd \ta_{r} \cap \pd \ta_{r+1},
    \ 0\le r \le m-1
    ,\quad
    (\tv_{r},\ta_{r}) = (u_{j_r},b_{j_r}),
    \  0\le r\le m.
\end{equation}
For each $\ell$ such that $ a_\ell\neq a_{\ell-1}$  and $\pd a_\ell \cap \cyc(a_{\ell-1})  =\varnothing$, there must exist $v'$ such that $\pd a_\ell \cap \pd a_{\ell-1} = \set{v'}$. In order for $a_\ell$ to remain ``active'' with respect to $a_{\ell-1}$ by time $t_\ell$, namely $t_\ell \le T_+(a_\ell;a_{\ell-1},t_{\ell-1})$, there must not be any  $1\le i\le m$ such that $\tv_i = v'$. In particular, it implies that either $v_{\ell-1} = v'$ or $(\ta_0\tv_0\ta_1\tv_1\cdots\ta_{m}v')$ completes a cycle in $G$.
For $v \in \pd a \in F$ we write  $\cyc^+(a,v) \equiv \cyc(a)\cup\set{v}$. For each path $\gamma_L=((v_{\ell},a_{\ell},t_{\ell}))_{0 \le \ell \le L } \in \Gamma_L$ we define
(See Figure~\ref{fig:twobranch})
\[
\Ic \equiv
\set{1\le \ell \le L:
    \pd a_{\ell} \cap \cyc^+(a_{\ell-1},v_{\ell-1}) = \varnothing
}
\]
be the set of \emph{cycle steps} and define
$
    I_\tpd \equiv \set{1,\dots,L}\setminus\Ic
$
to be the set of \emph{direct steps}. It follows that for given $(v_\ell,a_\ell,t_\ell)$, there is at most $\Delta\cdot|\set{v_\ell}\cup\cyc(a_\ell)|\le 3\Delta$ ways to select $a_{\ell+1}$ such that the next step is a direct step.
\begin{figure}[t]
\begin{center}
\includegraphics[page = 1 ,width = 0.35\textwidth]{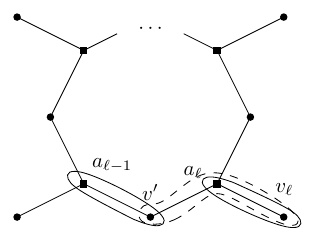}
\hspace{0.1\textwidth}
\includegraphics[page = 2 ,width = 0.35\textwidth]{twobranching}
\caption{Direct step and cycle step}\label{fig:twobranch}
\end{center}
\end{figure}

\begin{dfn}
    For each path $\gamma_L\in \Gamma_L$, we say that $\gamma_L$ is a \emph{(relaxed) projected path} if for each $1\le \ell\in L$ it satisfies the five events $\A_\ell,\BB_\ell,\CC_\ell,\DD_\ell^1,\DD_\ell^2$ defined in \eqref{e:minevents} and for each $\ell\in \Ic$, it further satisfies
\[
\GG_\ell \equiv \left\{
\begin{minipage}{0.6\linewidth}
    $\exists$ an alternating sequence
    $(\ta_0\tv_0\ta_1\tv_1\cdots\ta_{m}\tv_{m})$ such that:
    \begin{enumerate}[1.]
    \item $(\tv_0,\ta_0) = (v_{\ell-1},a_{\ell-1})$,
    $(\tv_m,\ta_m) = (v_{\ell},a_{\ell})$
    \item $(\ta_0\tv_0\ta_1\tv_1\cdots\ta_{m}v'_\ell)$ completes a cycle in $G$.
    \item $\exists t_{\ell-1}=\tlt_0\le \tlt_1<\cdots<\tlt_{m-1}\le \tlt_m = t_\ell$, such that $(\tv_r,\tlt_r)\in \xi^\circ$ for $1\le r\le m-2$.
    \end{enumerate}
\end{minipage}
\right\}
,
\]
where $v'_\ell$ is the vertex in  $\pd a_\ell \cap \pd a_{\ell-1}$.
Let $\tGm_{\proj,L}$ denotes the set of (relaxed) projected path.
\end{dfn}
\begin{rmk}\label{r:relaxedproj}
    Recall the definition in Remark~\ref{r:proj}. The discussion preceding the definition implies that $\Gamma_{\proj,L}\subseteq\tGm_{\proj,L}$. Meanwhile a path in $\tGm_{\proj,L}$ does not necessarily satisfy events~$\set{\DD^3_\ell}$. We also do not assume in the definition of $\GG_\ell$ that the values of updates at $\set{(\tv_r,\tlt_r)}$ are all ones (which turns out to be crucial in the proof). Hence the name relaxed.
\end{rmk}
\begin{rmk}
In the third requirement of the definition of $\GG_\ell$, we require $(\tv_r,\tlt_r)\in \xi^\circ$ for  $1\le r\le m-2$ so that none of the $\tv_r$'s belong to $\pd a_{\ell-1}\cup\pd a_{\ell}$ and hence $\GG_\ell$ and $\BB_\ell$ depend on different vertices. In the proof, we shall condition on events of the form $\GG_{\ell}$.  While conditioning on updates with value one works against us, conditioning on having at earlier times updates with unspecified values can only work to our advantage.
\end{rmk}

We now outline the two-step recursion on $\tGm_{\proj,L}$ and the proof \bpp of \ehl Theorem~\ref{t:mixing_regular}. Most of the arguments are parallel to the corresponding parts in Section~\ref{s:generalG}. With some abuse of notation, we occasionally override the notations in Section~\ref{s:generalG} with slightly different meanings.

Fix $\gamma_L = ((v_\ell,a_\ell,t_\ell))_{0\le \ell\le L} \in \Gamma_L$ and vertex-hyperedge pair $(v,a)$ satisfying $a\in\bN(a_L),v\in \pd a$, we let $\gamma_{L+1} (t) \equiv \gamma_{L+1}(v,a,t)$ be the path extended from $\gamma_L$ with $(v_{L+1},a_{L+1},t_{L+1}) = (v,a,t)$ and define
\begin{align*}
    N^\proj(v,a)
    &\equiv N^\proj(v,a;\gamma_{L})
    \equiv |\set{t:\gamma_{L+1}(v,a,t) \in \tGm_{\proj,L+1}}|
    .
\end{align*}

\begin{lem}\label{l:Ecyclebranch}
For any two neighbouring hyperedges $a\in F, b\in \bN(a)$, let $R(a,b)$ be the length of the shortest cycle in $G$ that contains $a$ and $b$.
There exists a constant $C_1>0$ such that
for any integer $L \ge 1$, $\gamma_L\in\tGm_{\proj,L}$ and $a\in \bN(a_L)$, $v\in \pd a$,
\begin{equation}
    \E[ N^\proj(v,a) \mid \gamma_L\in\tGm_{\proj,L}]
    \le {C_1}
    \begin{cases}
        1/k & a = a_{L}
        , \\
        2^{-k}
        & a \neq a_{L},\pd a \cap \cyc^+(a_L,v_L) \neq \varnothing
        , \\
        p_\textup{c}(R(a,a_L))
        & a \neq a_{L}, \pd a \cap \cyc^+(a_L,v_L) = \varnothing
        ,
    \end{cases}
    \label{e:Eprojstep}
\end{equation}
where
\[
        p_\textup{c}(r)\equiv
        e^{-k} + k \sum_{m\ge r}
        (\Delta k)^m
        \P(\textup{Pois}(k) \ge  m-2)
        .
\]
\end{lem}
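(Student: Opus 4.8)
The plan is to follow the proof of Lemma~\ref{l:Eminbranch} closely, the one genuinely new ingredient being the events $\GG_\ell$ carried by cycle steps. Write $\gamma_{L+1}(t)\equiv\gamma_{L+1}(v,a,t)$ for the one-step extension of $\gamma_L$ with $(v_{L+1},a_{L+1},t_{L+1})=(v,a,t)$. By Campbell's theorem,
\[
    \E[N^\proj(v,a)\mid\gamma_L\in\tGm_{\proj,L}]
    =\tfrac12\int_{t_L}^{\infty}
    \P\big(\gamma_{L+1}(t)\in\tGm_{\proj,L+1}\,\big|\,
        \gamma_L\in\tGm_{\proj,L},\;(v,t,1)\in\xi\big)\,dt,
\]
and the event inside is $\A_{L+1}(t)\cap\BB_{L+1}(t)\cap\CC_{L+1}(t)\cap\DD^1_{L+1}(t)\cap\DD^2_{L+1}(t)$, together with $\GG_{L+1}(t)$ when $L+1$ is a cycle step. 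The first step is a recursion lemma in the spirit of Lemma~\ref{l:pathrecur1}: since $\CC_{L+1}(t)$ forces every vertex of $\pd a$ to be updated at least once on $(0,t)$, the event $\DD^1_{L+1}(t)$ contributes a conditionally independent factor $2^{-|\pd a\cap\pd a_L^c\setminus\{v\}|}$, and what remains to control is the residual probability of $\BB_{L+1}(t)$, $\DD^2_{L+1}(t)$ and (for cycle steps) $\GG_{L+1}(t)$. The partition-and-condition argument of Lemma~\ref{l:pathrecur1} goes through because the \emph{relaxed} event $\GG_\ell$ does not pin the values of the intermediate updates to $1$, so conditioning on $\GG$-events of earlier cycle steps can only help.

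In Cases~1 and~2 the step $L+1$ is a direct step, so $\GG_{L+1}(t)$ is absent and the estimate reduces to that of Lemma~\ref{l:Eminbranch}. When $a=a_L$, expressing the integrand through the skeleton chain of $Y_t(\pd a_L)$ turns it into the expected number of visits to $\vec1$ before the $k$-vertex coupon-collector time, and Lemma~\ref{l:LSRW} yields the $C_1/k$ bound. When $a\ne a_L$ is a direct step, $R_\star$-goodness forces $|\pd a\cap\pd a_L|\le2$, so the $\DD^1$-factor is $2^{-(k-O(1))}=O(2^{-k})$ while the residual factor is $O(1)$ by the same hypercube estimates, giving the $O(2^{-k})$ bound.

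Case~3 is the crux and is where I expect the real work. Since the step is a cycle step, $R_\star$-goodness forces $\pd a\cap\pd a_L$ to be a single vertex $v'$, so $\BB_{L+1}(t)$ reads $t\le T_+(v';t_L)$: the window $(t_L,t)$ on which the updates witnessing $\GG_{L+1}(t)$ may occur has length at most $W\equiv T_+(v';t_L)-t_L\sim\textup{Exp}(1)$, and $v'$ itself carries no update on it. I would split on $\{W>k\}$ and $\{W\le k\}$. On $\{W>k\}$ (probability $e^{-k}$) bound $N^\proj(v,a)$ crudely by the number of updates of $v$ on the window times the $\DD^1$-factor; this integrates to $O(e^{-k})$ and accounts for the stand-alone $e^{-k}$ in $p_\textup{c}$. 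On $\{W\le k\}$ union bound over all alternating sequences $(\ta_0\tv_0\cdots\ta_m\tv_m)$ with $\ta_0=a_L$, $\ta_m=a$ able to realize $\GG_{L+1}(t)$: there are at most $(\Delta k)^m$ of them, the shortest has combinatorial length at least $R(a,a_L)$, and for a fixed one the conditional probability that its $m-2$ fresh intermediate vertices carry updates in increasing time order inside $(t_L,t)$ is at most $(t-t_L)^{m-2}/(m-2)!$. Integrating against $dt$ over $(t_L,t_L+W)$ with $W\le k$ and summing over $m\ge R(a,a_L)$ yields a term of the shape $k\sum_{m\ge R(a,a_L)}(\Delta k)^m\,\P(\textup{Pois}(k)\ge m-2)$, i.e.\ the second term of $p_\textup{c}(R(a,a_L))$.

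The delicate points, and the main obstacle, are: (i) making the union bound over cycles commute cleanly with the conditioning of the recursion lemma, so that the probability of $\GG_{L+1}(t)$ really factors as claimed (here one uses again that $\GG$ is relaxed, so the extra conditioning is favorable); and (ii) the bookkeeping needed to land exactly on $p_\textup{c}$, in particular verifying convergence of the series. The latter is where the scale $R=\lceil3e\Delta k^2\rceil$ enters: once $m\ge R$, the Poisson tail makes $(\Delta k)^m\,\P(\textup{Pois}(k)\ge m-2)$ decay super-exponentially, so the sum is governed by its first term; combined with the fact that in an $R_\star$-good graph a cycle-step pair $(a,a_L)$ necessarily has $R(a,a_L)\ge 2R_\star$, this makes $p_\textup{c}(R(a,a_L))$ tiny for the pairs that actually occur.
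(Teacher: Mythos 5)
Your proposal follows essentially the same route as the paper's proof: Campbell's theorem, a recursion lemma in the spirit of Lemma~\ref{l:pathrecur1} (the paper's Lemma~\ref{l:pathrecur3}) in which the relaxed event $\GG_{L+1}$ factors out precisely because it is measurable with respect to the unmarked process $\xi^\circ$, reduction of the first two cases to Lemma~\ref{l:Eminbranch} and Lemma~\ref{l:LSRW} (using $|\pd a\cap\pd a_L|\le 2$ from $R_\star$-goodness), and, for the cycle case, the same split into an $e^{-k}$ contribution from the event that the single shared vertex $v'$ stays un-updated for time $k$, plus a union bound over the at most $(\Delta k)^m$ cycles through $a_L$ with a sequential-update (Erlang) estimate. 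The only caveat is the final bookkeeping: bounding the sequential-update probability by $(t-t_L)^{m-2}/(m-2)!$ and integrating over the window gives $k^{m-1}/(m-1)!$, which exceeds the stated term $k\,\P(\textup{Pois}(k)\ge m-2)$ by a factor of order $e^{k}$; to land exactly on $p_\textup{c}$ you should keep the exact probability, which equals $\P(\textup{Pois}(t-t_L)\ge m-2)$, and use its monotonicity in $t$ to bound the integral over the length-$k$ window by $k\,\P(\textup{Pois}(k)\ge m-2)$, exactly as the paper does (your weaker bound would still suffice for the application in Proposition~\ref{p:Rstar}, but not for the lemma with $p_\textup{c}$ as defined).
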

\begin{ppn}\label{p:Rstar}
    Denote $R_\star =R_\star(\Delta,k)\equiv  \lceil e \Delta k^{2} \rceil+ 1$. Then that for all $\Delta \le 2^{k}$, $\gamma_L\in \Gamma_L$, $r\ge 2R_\star$ and $1\le \ell \le L$,
\[
p_c(r) \le (1+2^{-e \Delta k^{2} })\cdot e^{-k} \le 2e^{-k}.
\]
\end{ppn}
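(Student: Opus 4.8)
The plan is to isolate the sum
\[
    S(r)\equiv k\sum_{m\ge r}(\Delta k)^m\,\P(\textup{Pois}(k)\ge m-2)
\]
in the definition of $p_c(r)$ and show that it is negligible compared to the leading term $e^{-k}$; concretely I will prove $S(r)\le 2^{-e\Delta k^2}e^{-k}$, which immediately yields $p_c(r)=e^{-k}+S(r)\le (1+2^{-e\Delta k^2})e^{-k}\le 2e^{-k}$ (note that the quantities $\gamma_L\in\Gamma_L$ and $1\le\ell\le L$ in the statement are vestigial, as $p_c$ depends only on $r,\Delta,k$).

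First I would invoke the standard Chernoff bound for the Poisson tail: for every integer $j\ge k$,
\[
    \P(\textup{Pois}(k)\ge j)\ \le\ e^{-k}\Big(\tfrac{ek}{j}\Big)^{j},
\]
obtained by optimizing $\inf_{t>0}e^{-tj}\E e^{tX}$ at $e^{t}=j/k$. Since for $m\ge r\ge 2R_\star$ we have $m-2\ge 2\lceil e\Delta k^2\rceil\ge 2e\Delta k^2\ge k$, this applies with $j=m-2$, and regrouping the powers gives
\[
    (\Delta k)^m\,\P(\textup{Pois}(k)\ge m-2)\ \le\ (\Delta k)^2\,e^{-k}\Big(\tfrac{e\Delta k^2}{m-2}\Big)^{m-2}.
\]
The choice $R_\star=\lceil e\Delta k^2\rceil+1$ is exactly what makes $\tfrac{e\Delta k^2}{m-2}\le\tfrac12$ for every $m\ge r\ge 2R_\star$, so each summand is at most $(\Delta k)^2 e^{-k}2^{-(m-2)}$; summing the resulting geometric series over $m\ge r$ yields $S(r)\le 8k^3\Delta^2\,2^{-r}e^{-k}$. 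Finally, using once more that $r\ge 2R_\star\ge 2e\Delta k^2$, I would write $2^{-r}\le 2^{-e\Delta k^2}\cdot 2^{-e\Delta k^2}$ and reduce to the elementary inequality $8k^3\Delta^2\le 2^{e\Delta k^2}$; taking $\log_2$ and bounding $\log_2\Delta\le k$ (this is where the hypothesis $\Delta\le 2^k$ enters, although the inequality is in fact comfortable for all $k\ge 2$ and $1\le\Delta\le 2^k$, the only slightly delicate cases being the smallest values of $k$, where $\log_2 k$ is itself small), we obtain $8k^3\Delta^2 2^{-e\Delta k^2}\le 1$, hence $S(r)\le 2^{-e\Delta k^2}e^{-k}$ and the claim.

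I do not expect a genuine obstacle here: the argument is a one-line tail estimate followed by a geometric summation. The only points requiring care are (i) applying the Poisson Chernoff bound strictly in the regime $m-2\ge k$ where it is valid, which is ensured by $r\ge 2R_\star$, and (ii) verifying the final scalar inequality $8k^3\Delta^2\le 2^{e\Delta k^2}$ uniformly over the full parameter range, which is routine. The conceptual content is simply that $R_\star$ has been calibrated so that the geometric ratio is $\le\tfrac12$ and there is even a spare factor $2^{-e\Delta k^2}$ left over.
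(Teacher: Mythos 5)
Your proposal is correct and follows essentially the same route as the paper: bound the Poisson tail by roughly $e^{-k}(ek/(m-2))^{m-2}$, absorb $(\Delta k)^m$ to get the ratio $e\Delta k^2/(m-2)\le \tfrac12$ (this is exactly how $R_\star$ is calibrated), sum the geometric series, and use the leftover exponential factor to swallow the polynomial prefactor. The only cosmetic difference is that you use the Poisson Chernoff bound where the paper uses Stirling together with $\P(\mathrm{Pois}(k)\ge m)\le 2\,\P(\mathrm{Pois}(k)=m)$ for $m\ge 2ek$; both yield the same tail estimate and the rest of the argument coincides.
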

\begin{proof}Let $R$ follows the Poisson$(k)$ distribution of parameter $k$. Using Stirling's approximation, we have that $\P(R = m) \le \frac{e^{-k}k^m}{m!} \le \frac{e^{-k}}{\sqrt{2 \pi m}}\left( \frac{ek}{ m}\right)^m $. Hence $\P(R \ge m) \le 2  \P(R = m)  $ for   all $m \ge 2 e k$. It follows that for all $r \ge \lceil 2e \Delta k^{2} \rceil+2$,
\begin{align*}
    p_c(r) - e^{-k}
    &=
    k \sum_{m\ge r} (\Delta k)^m \P(R\ge  m-2) \le 2\Delta k^{2} \sum_{m\ge r}\frac{e^{-k}}{\sqrt{2 \pi (m-2)}}\left( \frac{e\Delta k^{2}}{ m}\right)^{m-2}
    \\&\le
    \frac{1}{2}\Delta ke^{-k} \sum_{i\ge r-2}2^{-i} \le \Delta ke^{-k}2^{-(r-2)} \le e^{-k}2^{-e \Delta k^{2} }.
\end{align*}
This concludes the proof.
\end{proof}
Meanwhile, similarly to the definition of type-II branchings in redacted paths, special treatment is needed for paths staying at the same hyperedges three steps in a row. Fix $\gamma_L = ((v_\ell,a_\ell,t_\ell))_{0\le \ell\le L} \in \Gamma_L$ and let vertices $v,v'$ satisfy $v,v'\in a_L$. Let  $\gamma_{L+2}\equiv \gamma_{L+2}(v,v',t,t') =((v_\ell,a_\ell,t_\ell))_{0\le \ell\le L+2}\in \Gamma_{L+2}$ be the path extended from $\gamma_L$ with
\[
    (v_{L+2}, a_{L+2},t_{L+2})=(v,a_L,t)
    ,\quad (v_{L+1}, a_{L+1},t_{L+1})=(v',a_L,t'),
\]
Observe that by construction, in order for $\gamma_{L+2}$ (as above) to be in $\tGm_{\proj,L+2}$, it must be the case that  $a_{L}$ is deactivated w.r.t.~itself by time $t$ (i.e.,~$t>T_+(a_L;t_L)$), making the event $Y_t(\pd a_L) = \vec{1} $ unlikely (cf.~the first case of Lemma~\ref{l:Ecyclebranch}). This is quantified in the following lemma.
We define
\[
     N^\proj_=(v,v') \equiv N^\proj_=(v,v';\gamma_L)
     \equiv|\set{(t,t'):
        \gamma_{L+2} (v,v',t,t') \in\tGm_{\proj,L+2}
    }|.
\]

\begin{lem}\label{l:Elocalbranch}
Under the notations above, there exists an absolute constant $C_2>0$ such that
\begin{equation}
    \E[ N^\proj_=(v,v';\gamma_L) \mid \gamma_L\in\tGm_{\proj,L}]
    \le {C_2}k^22^{-k}.
\end{equation}
\end{lem}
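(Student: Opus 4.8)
The plan is to imitate the proofs of Lemma~\ref{l:Eminbranch} and Lemma~\ref{l:EtypeII}: apply Campbell's theorem to the two extra updates, reduce the conditioning on $\{\gamma_L\in\tGm_{\proj,L}\}$ to the single event $\{Y_{t_L}(\pd a_L)=\vec 1\}$, and extract the decisive factor $2^{-k}$ from the observation that the \emph{second} of the two new steps is forced to occur after a full refresh of $\pd a_L$, at which time $Y$ restricted to $\pd a_L$ is a vector of independent fair coins. Since $N^\proj_=$ vanishes unless $\gamma_L\in\tGm_{\proj,L}$, it suffices to bound $\E[\bar M\mid\gamma_L\in\tGm_{\proj,L}]$, where $\bar M$ denotes the number of pairs of updates $(v',t',1),(v,t,1)\in\xi$ with $t_L<t'<t$ satisfying the events attached to steps $L+1$ and $L+2$ of $\gamma_{L+2}(v,v',t,t')$.

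The first routine step is to simplify those events. Because $a_{L+1}=a_{L+2}=a_L$, the events $\DD^1_{L+1},\DD^1_{L+2}$ and every relevant $\EE$- and $\GG$-event are vacuous (steps $L+1,L+2$ are direct steps); writing $\tau_1:=T_+(a_L;t_L)$ and $\tau_2:=T_+(a_L;\tau_1)$, what remains of $\BB_{L+1},\DD^2_{L+1},\CC_{L+2},\BB_{L+2},\DD^2_{L+2}$ implies
\[
t_L<t'\le\tau_1<t\le T_+(a_L;t')\le\tau_2,\qquad Y_{t'}(\pd a_L\setminus\{v'\})=\vec 1,\qquad Y_{t}(\pd a_L\setminus\{v\})=\vec 1,
\]
where $t'\le\tau_1$ is $\BB_{L+1}$, the clause $t>\tau_1$ is the $r=L$ term of $\CC_{L+2}$, $t\le T_+(a_L;t')$ is $\BB_{L+2}$, and $T_+(a_L;t')\le\tau_2$ follows from $t'\le\tau_1$ and the monotonicity of $s\mapsto T_+(a_L;s)$. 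Thus $N^\proj_=\le\Ind{\gamma_L\in\tGm_{\proj,L}}\,\bar M$ already if $\bar M$ is redefined to count only the pairs obeying the three displayed conditions; discarding the remaining clauses of $\CC_{L+1},\CC_{L+2}$ only enlarges the count.

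Next I would carry out the reduction of the conditioning. Exactly as in the measurability discussion preceding Lemma~\ref{l:pathrecur1}, but considerably simpler here — relaxed projected paths carry no $\EE$-events, and each $\GG_\ell$ with $\ell\le L$ only asserts the existence of certain updates at times $\le t_\ell\le t_L$ — one checks that $\{\gamma_L\in\tGm_{\proj,L}\}$ is measurable with respect to the restriction of $\xi$ to $[0,t_L]$ and that $\A_L\cap\DD^1_L\cap\DD^2_L$ force $Y_{t_L}(\pd a_L)=\vec 1$. Since $\bar M$ is a deterministic function of $Y_{t_L}(\pd a_L)$ together with the updates at vertices of $\pd a_L$ at times $>t_L$, the independence of the Poisson process before and after $t_L$ yields $\E[\bar M\mid\gamma_L\in\tGm_{\proj,L}]=\E[\bar M\mid Y_{t_L}(\pd a_L)=\vec 1]$.

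For the final estimate I would condition additionally on the whole family of Poisson clock rings, which freezes $t_L,\tau_1,\tau_2$ and the event $\{Y_{t_L}(\pd a_L)=\vec 1\}$ and leaves only the i.i.d.\ Bernoulli$(1/2)$ values carried by rings after $t_L$. The key point is that every vertex of $\pd a_L$ is rung during $(t_L,\tau_1]$, so for \emph{every} $t>\tau_1$ the $k$ coordinates of $Y_t(\pd a_L)$ are independent fair coins; hence $\P(Y_t(\pd a_L\setminus\{v\})=\vec 1)=2^{-(k-1)}$ for each such $t$, independently of the value of the $v$-ring there. Bounding each summand of $\bar M$ by $2^{-k}$ in this way (discarding the step-$(L+1)$ constraint) and applying Campbell's theorem leaves $\E[\bar M\mid Y_{t_L}(\pd a_L)=\vec 1]\lesssim 2^{-(k-1)}\,\E[(\tau_1-t_L)(\tau_2-\tau_1)]$; as $\tau_1-t_L$ and $\tau_2-\tau_1$ are independent by the strong Markov property at the stopping time $\tau_1$ and each has mean $H_k:=\sum_{i=1}^k i^{-1}=O(\log k)$ (a maximum of $k$ rate-one exponentials), this is $O((\log k)^2\,2^{-k})\le C_2 k^2 2^{-k}$. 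The only genuinely delicate point is this last reduction: one must condition on the clock \emph{times} rather than on $\mathcal{F}_{\tau_1}$, since the latter would reveal $Y_{\tau_1}(\pd a_L)$ and so destroy the $2^{-(k-1)}$ bound on the step-$(L+2)$ event — which is exactly the saving over the na\"ive product of two single-step bounds of Lemma~\ref{l:Ecyclebranch}.
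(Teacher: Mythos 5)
Your proposal is correct and follows essentially the paper's route: both proofs extract the factor $2^{-(k-1)}$ from $\DD^2_{L+2}$ using that the $r=L$ clause of $\CC_{L+2}$ forces a full refresh of $\pd a_L$ before $t$ (so $Y_t(\pd a_L)$ is i.i.d.\ fair coins independently of the conditioning), and then bound the remaining time-area by a second moment of the doubled coupon-collector time $T_+(a_L;T_+(a_L;t_L))$ (the paper crudely gets $\E[(T_2-t_L)^2]\le 8k^2$, your $H_k^2=O(\log^2 k)$ is sharper but equivalent for the stated bound); your cleaner reduction of the conditioning via a split at time $t_L$ is legitimate here precisely because both new steps live on $\pd a_L\times(t_L,\infty)$ while $\{\gamma_L\in\tGm_{\proj,L}\}$ is measurable w.r.t.\ $\xi$ up to time $t_L$, which is a valid shortcut past the paper's $\FF_{L+1}$/partition machinery. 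One small point of rigor: equating the expected number of rings in the random intervals $(t_L,\tau_1]$ and $(\tau_1,\tau_2]$ with $\E[\tau_1-t_L]$ and $\E[\tau_2-\tau_1]$ is not Campbell's theorem over deterministic regions but a Wald/optional-stopping identity at the stopping times $\tau_1,\tau_2$ (or, as the paper does, integrate over deterministic $t,t'$ and use $\Ind{t'\le\tau_1<t\le\tau_2}$), which should be said explicitly.
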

\smallskip

For $r=1,2$, Let $N_{\proj, L+r}(\gamma_L)$ be the number of paths $\gamma_{L+r}\in\tGm_{\proj, L+r}$ that agree with $\gamma_L$ in the first $L$ steps and write $N^\neq_{\proj,L+1}(\gamma_L)$ (resp.~$ N^=_{\proj,L+1}(\gamma_L)$) for the number of paths $\gamma_{L+1}$'s counted in $N_{\proj,L+1}(\gamma_L)$ that further satisfies $a_{L+1} \neq a_L$ (resp.~$a_{L+1} = a_L$).
Lemma~\ref{l:Ecyclebranch}, Lemma~\ref{l:Elocalbranch} and Proposition~\ref{p:Rstar} together imply the following theorem. The proof is presented for the sake of completeness.
\begin{thm}\label{t:Etwosteps2}
Under the above notation, there exists an absolute constant $C_\proj>0$, such that for any $R_\star$-good hypergraph $G$, integer $L\ge 1$ and path $\gamma_L\in \Gamma_L$, we have that
\begin{align}
    \E[N^=_{\proj,L+1}(\gamma_L)\mid \gamma_L\in \tGm_{\proj,L}]
    &\le
    C_\proj
    ,\label{e:Nprojeq}\\
    \E[N^\neq_{\proj,L+1}(\gamma_L)\mid \gamma_L\in \tGm_{\proj,L}]
    &\le
    C_\proj (\Delta k) 2^{-k}
    ,\label{e:Nprojneq}\\
    \E[N_{\proj,L+2}(\gamma_L)\mid \gamma_L\in \tGm_{\proj,L}]
    &\le
    C_\proj^2 [(\Delta k 2^{-k}) + k^4 2^{-k}]
    .
    \label{e:Nprojtwo}
\end{align}
\end{thm}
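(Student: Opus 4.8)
The plan is to deduce all three estimates of Theorem~\ref{t:Etwosteps2} from Lemma~\ref{l:Ecyclebranch}, Lemma~\ref{l:Elocalbranch} and Proposition~\ref{p:Rstar}, using exactly the ``sum over the next step, then condition'' recursion that drives the proof of Theorem~\ref{t:Etwosteps}. Write $\tE_{L'}[\,\cdot\,]\equiv\E[\,\cdot\mid\gamma_{L'}\in\tGm_{\proj,L'}]$. The one structural fact I would record at the outset is that a subpath of a relaxed projected path is again a relaxed projected path (each of $\A_\ell,\BB_\ell,\CC_\ell,\DD^1_\ell,\DD^2_\ell,\GG_\ell$ depends only on the first $\ell$ steps), so $\set{\gamma_{L'+1}\in\tGm_{\proj,L'+1}}\subseteq\set{\gamma_{L'}\in\tGm_{\proj,L'}}$ and the conditional expectations telescope. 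Then \eqref{e:Nprojeq} is immediate: a one‑step extension of $\gamma_L$ staying at $a_L$ is determined by $(v,a_L,t)$ with $v\in\pd a_L$, so $N^=_{\proj,L+1}(\gamma_L)=\sum_{v\in\pd a_L}N^{\proj}(v,a_L;\gamma_L)$ and the first case of Lemma~\ref{l:Ecyclebranch} gives $\tE_L[N^=_{\proj,L+1}(\gamma_L)]\le k\cdot C_1/k=C_1$.

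For \eqref{e:Nprojneq} I would write $N^{\neq}_{\proj,L+1}(\gamma_L)=\sum_{a\in\bN(a_L)\setminus\set{a_L}}\sum_{v\in\pd a}N^{\proj}(v,a;\gamma_L)$ and split $\bN(a_L)\setminus\set{a_L}$ into \emph{direct} hyperedges ($\pd a\cap\cyc^+(a_L,v_L)\neq\varnothing$) and \emph{cycle} hyperedges (the rest). There are at most $|\cyc^+(a_L,v_L)|\cdot\Delta\le 3\Delta$ direct ones, and the second case of Lemma~\ref{l:Ecyclebranch} contributes $\le C_1 k2^{-k}$ to each, for a total $\le 3C_1\Delta k2^{-k}$. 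For a cycle hyperedge $a$ I would observe that $\pd a\cap\pd a_L$ consists of a single vertex lying outside $\cyc(a_L)$ (two shared vertices would form a length‑$2$ cycle, hence lie in $\cyc(a_L)$ by $R_\star$‑goodness), and then that any cycle through $a$ and $a_L$ can be shortcut through this shared vertex, so the shortest such cycle passes through it and therefore has length $\ge 2R_\star$; thus $p_{\textup c}(R(a,a_L))\le 2e^{-k}$ by Proposition~\ref{p:Rstar}. Since $|\bN(a_L)|\le\Delta k$, the third case of Lemma~\ref{l:Ecyclebranch} bounds the cycle contribution by $(\Delta k)\cdot C_1k\cdot 2e^{-k}$, which is $\le C\Delta k2^{-k}$ because $k^2e^{-k}=O(2^{-k})$. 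Adding the two pieces gives \eqref{e:Nprojneq} with $C_\proj$ an absolute multiple of $C_1$.

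For \eqref{e:Nprojtwo} I would split a two‑step extension of $\gamma_L$ according to its last two hyperedges into (i) $a_{L+1}\neq a_L$; (ii) $a_{L+1}=a_L,\ a_{L+2}\neq a_L$; (iii) $a_{L+1}=a_L=a_{L+2}$. In case (i), conditioning on the first step and applying \eqref{e:Nprojeq} and \eqref{e:Nprojneq} to the length‑$(L+1)$ path $\gamma_{L+1}$ obtained by appending that step gives $\tE_{L+1}[N_{\proj,L+2}(\gamma_{L+1})]=\tE_{L+1}[N^=_{\proj,L+2}(\gamma_{L+1})]+\tE_{L+1}[N^{\neq}_{\proj,L+2}(\gamma_{L+1})]\le C_\proj+C_\proj\Delta k2^{-k}$, so the case‑(i) total is $\le\tE_L[N^{\neq}_{\proj,L+1}(\gamma_L)]\cdot(C_\proj+C_\proj\Delta k2^{-k})\le C_\proj\Delta k2^{-k}\cdot(C_\proj+C_\proj\Delta k2^{-k})$. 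Case (ii) similarly yields $\le\tE_L[N^=_{\proj,L+1}(\gamma_L)]\cdot C_\proj\Delta k2^{-k}\le C_\proj^2\Delta k2^{-k}$. Case (iii) is precisely what Lemma~\ref{l:Elocalbranch} controls: the number of such extensions is $\sum_{v,v'\in\pd a_L}N^{\proj}_=(v,v';\gamma_L)$, with conditional expectation $\le k^2\cdot C_2k^22^{-k}=C_2k^42^{-k}$. Summing the three cases, using $(\Delta k2^{-k})^2\le\Delta k2^{-k}$ in the range where the estimate is non‑trivial, and enlarging $C_\proj$ to absorb $C_2$ and the absolute multiplicative constants, gives \eqref{e:Nprojtwo}.

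\textbf{Main obstacle.} The essential point, and the reason case (iii) must be isolated, is that the two‑step recursion used in (i) and (ii) is useless once a path lingers at a single hyperedge: $\tE[N^=_{\proj,L+1}]\le C_\proj$ is only a constant, so iterating it twice would produce $C_\proj^2$ rather than anything that decays in $k$. Lemma~\ref{l:Elocalbranch} is designed to beat this: staying at $a_L$ for three consecutive steps forces $a_L$ to have been deactivated with respect to itself before the last of them, so the all‑ones event $\set{Y_t(\pd a_L)=\vec 1}$ incurs an honest factor, and quantifying that this factor is $O(k^22^{-k})$ and not $O(1)$ is where the real work sits — but it lives inside that lemma. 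Granting it, Lemma~\ref{l:Ecyclebranch} and Proposition~\ref{p:Rstar}, the proof of Theorem~\ref{t:Etwosteps2} itself is just the bookkeeping of the three branching types together with the elementary geometric observation that a cycle step is available only through cycles of length at least $2R_\star$.
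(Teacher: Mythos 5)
Your proposal is correct and is essentially the paper's own argument: \eqref{e:Nprojeq} comes from the first case of Lemma~\ref{l:Ecyclebranch}; \eqref{e:Nprojneq} from splitting $\bN(a_L)\setminus\set{a_L}$ into at most $3\Delta$ direct hyperedges (second case of Lemma~\ref{l:Ecyclebranch}) and cycle hyperedges, for which $R_\star$-goodness and the shortcut-through-the-shared-vertex observation force $R(a,a_L)\ge 2R_\star$ so that Proposition~\ref{p:Rstar} yields $p_{\textup{c}}\le 2e^{-k}$; and \eqref{e:Nprojtwo} from conditioning on the first extension step (using that the bounds of Lemma~\ref{l:Ecyclebranch} are uniform in the update time) together with Lemma~\ref{l:Elocalbranch} for the case $a_L=a_{L+1}=a_{L+2}$. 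The only divergence is that you explicitly include the sub-case $a_{L+1}\neq a_L$, $a_{L+2}\neq a_{L+1}$, which the paper's displayed decomposition omits, and absorb its $(\Delta k 2^{-k})^2$ contribution under the implicit assumption $\Delta k 2^{-k}=O(1)$ (the regime in which the theorem is actually invoked), so your bookkeeping is, if anything, slightly more complete than the paper's.
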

\begin{proof}
    Fix $\gamma_L\equiv((v_\ell,a_\ell,t_\ell))_{1\le\ell\le L} \in \Gamma_L$  and for brevity define $\tE_L\equiv\E[\,\cdot\mid \gamma_L\in\tGm_{\proj,L}]$.
  We first prove \eqref{e:Nprojeq}.  By the first case of Lemma~\ref{l:Ecyclebranch},
\[
    \tE_L[N^=_{\proj,L+1}(\gamma_L) ]
    = \sum_{v\in\pd a_L}
    \tE_L[N^\proj(v,a_L;\gamma_L)]
    \le k \cdot C_1 k^{-1}
    .
\]
We now prove \eqref{e:Nprojneq},  we define
\[
    A_{L+1}\equiv\set{a\in\bN(a_L)\setminus{ \{ a_L \}}: \pd a\cap \cyc^+(a_L,v_L) = \varnothing}
\]
be the set of hyperedges that could form a direct branching from $(v_L,a_L,t_L)$.
It follows that
\begin{align*}
    N^\neq_{\proj,L+1}(\gamma_L)
    &=
    \sum_{a\in A_{L+1}} \sum_{v\in\pd a}
        N^\proj(v,a;\gamma_L)
    +
    \sum_{a\in\bN(a_L)\setminus(A_{L+1}\cup\set{a_L})} \sum_{v\in\pd a}
        N^\proj(v,a;\gamma_L)
    .
\end{align*}
Observe that every $a\in \bN(a_L)\setminus( A_{L+1}\cup \{a_L \})$  must satisfy $R(a,a_L)\ge R_\star$. Applying the last two cases of Lemma~\ref{l:Ecyclebranch} then yields
\begin{align*}
    \tE_L[N^\neq_{\proj,L+1}(\gamma_L)]
    &\le k|A_{L+1}| \cdot C_1 2^{-k} + k|\bN(a_L)|  \cdot C_1 p_c(R_\star)
    \\&\le C_1[3k\Delta  2^{-k} + k\Delta\cdot k p_c(R^\star)]
    \le C_\proj\, \Delta k 2^{-k}
    .
\end{align*}

Finally, to prove \eqref{e:Nprojtwo}, we note that the bound of Lemma~\ref{l:Ecyclebranch} does not depend on $t_L$. Again we abbreviate $\tE_{L+1}(\cdot) \equiv \E[\cdot \mid\gamma_{L+1}\in\tGm_{\proj,L+1}]$ and let $A_{L+1}$ be defined as in the previous case (with respect to the $(L+1)$'th step). There are three possible ways of extending $\gamma_L $ to $\gamma_{L+2}$:
\[
    a_L = a_{L+1} = a_{L+2}
    \textup{\quad or \quad}
    a_L = a_{L+1} \neq a_{L+2}
    \textup{\quad or \quad}
    a_L \neq a_{L+1}
    .
\]
Following a similar argument as that of Theorem~\ref{t:Etwosteps}, we have that
\begin{align*}
    \tE[N_{\proj,L+2}(\gamma_L)]
    &\le \sum_{v,v'\in a_L} \tE_L[N^\proj_=(v,v')]
    \\&+ \sum_{a \neq a_L}\sum_{v\in\pd a}
    \tE_L[N^\proj(v,a;\gamma_L)] \cdot \tE_{L+1}[N^=_{\proj,L+2}(\gamma_{L+1}(v,a))]
    \\&+ \sum_{v\in\pd a_{L}}
    \tE_L[N^\proj(v,a_{L};\gamma_L)] \cdot \tE_{L+1}[N^\neq_{\proj,L+2}(\gamma_{L+1}(v,a_{L}))].
\end{align*}
Applying \eqref{e:Nprojeq}, \eqref{e:Nprojneq} and Lemma~\ref{l:Elocalbranch} implies that
\begin{align*}
    \tE[N_{\proj,L+2}(\gamma_L)]
&\le C_2 k^42^{-k} +
    \tE_{L}[N^\neq_{\proj,L+1}(\gamma_{L})] \max_{v,a: a \neq a_L,v \in \pd a}
    \tE_{L+1}[N^=_{\proj,L+2}(\gamma_{L+1}(v,a))]
\\&
    +
    \tE_{L}[N^=_{\proj,L+1}(\gamma_{L})]
  \max_{v \in \pd a_{L}}  \tE_{L+1}[N^\neq_{\proj,L+2}(\gamma_{L+1}(v,a_{L}))]
\\&\le
\textup{ RHS of \eqref{e:Nprojtwo},}
\end{align*}
which  concludes the proof.
\end{proof}

\begin{proof}[Proof of Theorem~\ref{t:mixing_regular}]
    The assertion of Theorem~\ref{t:mixing_regular}  is obtained by combining Lemma~\ref{l:reduction}, \eqref{e:twocases_proj}, Remark~\ref{r:relaxedproj} and \eqref{e:Nprojtwo}.
\end{proof}

\subsection{Proof of Lemma~\ref{l:Ecyclebranch} and Lemma~\ref{l:Elocalbranch}}
In this subsection we present the proofs of remaining lemmas.
We begin with Lemma~\ref{l:Ecyclebranch}.
Throughout the proof, we keep $\gamma_L\in \Gamma_{L}, a\in \bN(a_L), v\in \pd a$ fixed, and for brevity of notation write $\gamma_{L+1}(t)$ for the path extended from $\gamma_L$ with $(v_{L+1},a_{L+1},t_{L+1}) = (v,a,t)$.
We further define
\begin{align*}
    \MM^\tc_\ell(t)
    \equiv \MM^\tc_{\ell}(\tgm_{L+1}(t)) &\equiv
    \A_\ell(t) \cap \BB_\ell(t) \cap \CC_\ell(t)
    \cap \DD^1_\ell(t) \cap \DD^2_\ell(t) \cap \GG_\ell(t)
    && \forall \ell \in \Ic(\gamma_{L+1}(t)),\\
    \MM^\tpd_\ell(t)
    \equiv \MM^\tpd_{\ell}(\tgm_{L+1}(t))&\equiv
    \A_\ell(t) \cap \BB_\ell(t) \cap \CC_\ell(t)
    \cap \DD^1_\ell(t) \cap \DD^2_\ell(t)
    && \forall \ell \in I_\tpd(\gamma_{L+1}(t)),
\end{align*}
where we henceforth omit  $t$ from the notation for all events with $\ell\le L$, as they do not depend on the value of $t$.
We further write (overriding any conflicting definitions from Section~\ref{s:generalG})
\[
    \NN_L
    \equiv \set{\gamma_{L} \in \tGm_{\proj,L}}
    =  \Big[
        \cap_{\ell \in \Ic} \MM^\tc_{\ell}
    \Big] \bigcap \Big[
    \cap_{\ell \in I_\tpd} \MM^\tpd_{\ell}
    \Big].
\]
By Campbell's theorem,
\begin{align*}
    \E[N^\proj(v,a) \,|\, \gamma_{L}\in\tGm_{\proj,L}]
    &=
    \E\Big[
        \!
        \sum_{t:(v,t,1)\in \xi} \!\!
        \Ind{\MM^\bullet_{L+1}(t)}
    \,\Big|\,
        \NN_{L}
    \Big]
    = \frac{1}{2}
    \int_{t_L}^\infty
    \P(\MM^\bullet_{L+1}(t) \,|\, \NN_{L}, \A_{L+1}(t)) dt.
\end{align*}
where $\bullet = \tc$ if $L+1\in \Ic$ and $\bullet = \tpd$ otherwise.
\begin{lem}\label{l:pathrecur3}
Under the notations above,
\begin{equation*}
    \P(\MM^\tpd_{L+1}(t)\mid \NN_L, \A_{L+1}(t))
    \le
    \begin{cases}
    2^{2-k} \P(\BB_{L+1}(t))
    &a\neq a_L
    \\
    \P(\BB_{L+1}(t))
    &a= a_L
    \end{cases}
    .
\end{equation*}
Moreover if $L+1\in \Ic(\gamma_{L+1}(t))$, then
\begin{equation*}
    \P(\MM^\tc_{L+1}(t)\mid \NN_L, \A_{L+1}(t))
    \le
    2^{1-k}\P(\GG_{L+1}(t))\P(\BB_{L+1}(t)).
\end{equation*}
\end{lem}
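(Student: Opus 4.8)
The plan is to mirror the proof of Lemma~\ref{l:pathrecur1} almost line for line, replacing its general overlap estimate by the sharper bound available in the $R_\star$-good regime and accommodating one new ingredient. Two things simplify. First, by $R_\star$-goodness $|\pd a\cap\pd a_L|\le|\cyc(a_L)|+1\le 2$, with equality only when $a$ is the unique ``$\cyc$-partner'' of $a_L$; moreover, when $L+1\in\Ic$ the defining condition $\pd a\cap\cyc^+(a_L,v_L)=\varnothing$ forces $\cyc(a_L)\cap\pd a=\varnothing$ and hence $|\pd a\cap\pd a_L|=1$. So the general factor $2^{-m}$ of Lemma~\ref{l:pathrecur1} becomes the absolute constants $2^{2-k}$ (direct step) and $2^{1-k}$ (cycle step). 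Second, relaxed projected paths are built from $\A_\ell,\BB_\ell,\CC_\ell,\DD^1_\ell,\DD^2_\ell$ and, for $\ell\in\Ic$, $\GG_\ell$, but \emph{not} from the events $\EE_\ell$, so the $\EE_\ell/\tilde\BB_\ell$-technicality of Lemma~\ref{l:pathrecur1} does not arise. What is genuinely new is that $\NN_L=\set{\gamma_L\in\tGm_{\proj,L}}$ now contains the events $\GG_\ell$, $\ell\in\Ic$, which assert the existence of update times at ``side-cycle'' vertices $\tv_1,\dots,\tv_{m-2}$: by the footnote in the definition of $\GG_\ell$ these lie outside $\pd a_{\ell-1}\cup\pd a_\ell$, but they may still coincide with a vertex of $\pd a\cup\pd a_L$ never used as a path-vertex, at a positive time.

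First I would localize exactly as before: let $t_-(u)$ be the last path-appearance time of $u$ in $\gamma_{L+1}$ (and $0$ if none), set $S_{L+1}\equiv\cup_{u\in\pd a\cup\pd a_L}\set{u}\times(t_-(u),\infty)$ and $\FF_{L+1}\equiv\sigma(\xi(\tV\setminus S_{L+1}))$, weaken $\CC_{L+1}$ to the localized event $\CC'_{L+1}$ of \eqref{e:CCprime} (which depends only on $\pd a\setminus\pd a_L$ and contains $\CC_{L+1}$), and set $\NN^\circ_L\equiv\set{Y_{t_\ell}(\pd a_\ell)=\vec 1 \text{ for all }\ell\in[L]}\supseteq\NN_L$. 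For every $\ell\le L$ the events $\A_\ell,\BB_\ell,\CC_\ell,\DD^1_\ell,\DD^2_\ell$ are $\FF_{L+1}$-measurable (each concerns $\pd a_\ell$ only up to $t_\ell\le t_L$, which precedes the cutoff $t_-(u)\ge t_\ell$ of each of its vertices), so the sole obstruction to conditional independence is $\set{\GG_\ell}_{\ell\in\Ic}$. I would resolve it exactly as the $\EE_\ell$ and $\tilde\BB_\ell$ events are resolved in the proof of Lemma~\ref{l:pathrecur1}: partition each offending $\GG_\ell$ into an $\FF_{L+1}$-measurable part and an $\FF_{L+1}$-conditionally-independent ``existence of updates'' part supported on $S_{L+1}$, apply $\P(B\mid A)\le\sup_i\P(B\mid A_i)$ over the pieces, and invoke the monotonicity of $Y$ (Proposition~\ref{p:monotonicity}): conditioning on the mere occurrence of updates in some time-window, without constraining their values, can only \emph{decrease} probabilities of the form $\P(Y_t(\cdot)=\vec 1)$. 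This is precisely why $\GG_\ell$ was defined without pinning down update values (cf.\ Remark~\ref{r:relaxedproj} and its footnote).

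With the conditioning reduced to $\NN^\circ_L$ plus a family of existence-events conditionally independent of the slab $(\pd a\setminus\pd a_L)\times[0,\infty)$, I would read off three factors. Writing $U\equiv(\pd a\setminus\pd a_L)\setminus\set{v}$: on $\CC'_{L+1}$ each $u\in U$ has been refreshed since its last appearance, so $Y_t(u)$ is an unbiased coin, mutually independent over $U$ and of everything else, giving $\P(\DD^1_{L+1}\mid\cdots)\le 2^{-|U|}$. The at most two vertices of $\pd a\cap\pd a_L$ enter only through $\DD^2_{L+1}$: any such vertex not re-updated in $(t_L,t]$ has $Y_t=1$ forced by monotonicity and $\NN^\circ_L$, so $\DD^2_{L+1}$ is implied by $\BB_{L+1}$ on the corresponding part of $\BB_{L+1}$ and costs at most an extra $\tfrac12$ elsewhere; and $\BB_{L+1}(t)$, depending only on $\xi^\circ$ on $(\pd a\cap\pd a_L)\times(t_L,t]$ --- a region disjoint from $U$'s and, for a cycle step, from the side-cycle one --- factors out as $\P(\BB_{L+1}(t))$. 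For a cycle step the remaining event $\GG_{L+1}(t)$ involves only $\tv_1,\dots,\tv_{m-2}\notin\pd a_L\cup\pd a$ and so factors out as $\P(\GG_{L+1}(t))$; as $|\pd a\cap\pd a_L|=1$ there, $|U|\ge k-2$ with the lone overlap vertex absorbed into $\BB_{L+1}$, giving the prefactor $2^{1-k}$. For a direct step with $a\neq a_L$, $|\pd a\cap\pd a_L|\le 2$ yields $|U|\ge k-2$ after absorbing one overlap vertex into $\BB_{L+1}$, giving $2^{2-k}$; and for $a=a_L$ the bound is immediate since $\MM^\tpd_{L+1}(t)\subseteq\BB_{L+1}(t)$ and conditioning on $(\NN_L,\A_{L+1}(t))$ does not increase $\P(\BB_{L+1}(t))$.

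The step I expect to be the main obstacle is the conditional-independence bookkeeping of the second paragraph: pinning down exactly how $\NN_L$ can leak into the new space-time slab $S_{L+1}$ --- only through the value-agnostic existence-events inherited from the $\GG_\ell$, which are split off just as in Lemma~\ref{l:pathrecur1}. Everything else is a routine re-run of that proof with $m\le 2$ in place of the general overlap.
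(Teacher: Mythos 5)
Your argument is essentially the paper's proof: the same localization via $\FF_{L+1}$, $\NN^\circ_L$ and $\CC'_{L+1}$, the same device of splitting the value-agnostic existence events $\GG_\ell$, $\ell\in\Ic$, into a slab-supported part independent of $\FF_{L+1}$ and an $\FF_{L+1}$-measurable part before taking a supremum over the pieces, and the same final factorization into the coin-flip bound for $\DD^1_{L+1}$, the factor $\P(\BB_{L+1}(t))$, and (for cycle steps) the independent factor $\P(\GG_{L+1}(t))$, with $|\pd a\cap\pd a_L|\le 2$ from $R_\star$-goodness supplying the constants. The one soft spot --- extracting an extra factor $\tfrac12$ from $\DD^2_{L+1}$ ``absorbed into $\BB_{L+1}$'', which is not literally valid when no overlap vertex is refreshed in $(t_L,t]$ --- matches the paper's own constant bookkeeping at this step and is harmless, since these constants are absorbed into the absolute constant $C_1$ downstream.
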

\begin{proof}
Recall the definition of $\ell_-(u),\FF_{L+1}, \NN^\circ_{L}, \CC'_{L+1}$ from the proof of Lemma~\ref{l:pathrecur1} and omit $t$ from the notation.
By the Markov property of the  process $Y_t$, the events $\A_{L+1}$, $\BB_{L+1}$, $\CC'_{L+1}$, $\DD^1_{L+1}$, $\DD^2_{L+1}$ and (in the case of $L+1\in \Ic$) $\GG_{L+1}$ are independent of $\FF_{L+1}$ given $\NN^\circ_{L}$.
Meanwhile, for each $1\le \ell\le L$, we have that $\MM^\tpd_\ell$ is measurable w.r.t.\ $\FF_{L+1}$. It is left to treat $\set{\GG_\ell}_{\ell\in\Ic}$.

Observe that for each $\ell\in\Ic$, we have that $\GG_\ell$ is a measurable function of $\xi^\circ(V\times{[}t_{\ell-1},t_\ell))$, the time and locations of all updates between $t_{\ell-1}$ and $t_\ell$ without the value of the updates. Following a similar argument of Lemma~\ref{l:pathrecur1}, we define
\[
    W_\ell \equiv
    \set{u\in \pd a_{\ell-1} \cap \pd a_\ell^c:
        u\in (\pd a\cap \pd a_{L}^c) \setminus \set{v},
        \ell_-(u) = \ell-1
    }
\]
and (overriding the definition in Lemma~\ref{l:pathrecur1})
\[
\tilde{W}_\ell \equiv W_\ell\times{[}t_{\ell-1}, t_\ell)
, \quad
\tilde{V}_\ell \equiv W_\ell^c\times{[}t_{\ell-1}, t_\ell)
.
\]
Let $\Xi_\ell(\tilde{W}_\ell)$ be the range of the unmarked update process $\xi^\circ(\tilde{W}_\ell)$ over $\GG_\ell$ (i.e.,~the collection of all possible values of $\xi^\circ(\tilde{W}_\ell)$, provided that $\GG_{\ell}$ occurs).
It follows that
\[
    \GG_\ell
    = \cup_{\xi^\circ(\tilde{W}_\ell)\in\Xi_\ell(\tilde{W}_\ell) }
    \set{\xi^\circ(\tilde{W}_\ell)}
    \times
    \set{\xi^\circ(\tilde{V}_\ell):
        \xi^\circ(\tilde{W}_\ell \cup\tilde{V}_\ell)
        \in \GG_\ell}
    ,
\]
where $\xi^\circ(\tilde{W}_\ell)$ is independent of $\FF_{L+1}$ and $\xi^\circ(\tilde{V}_\ell)$ is $\FF_{L+1}$-measurable.
Therefore following a similar calculation to \eqref{e:condindep}, we have
\begin{align}
    &\P(\MM^\tpd_{L+1}(t)\mid \NN_L,A_{L+1}(t))
\nonumber\\&\le \sup_{\xi^\circ(\tilde{W}_\ell)\in\Xi_\ell(\tilde{W}_\ell) }
    \P(\DD^1_{L+1}\mid
        \NN^\circ_L,\CC'_{L+1},
        \set{\xi^\circ(\tilde{W}_\ell)}_{\ell\in\Ic})
    \cdot \P(\BB_{L+1},\DD^2_{L+1}\mid \NN^\circ_L)
    \nonumber\\&\le
    2^{-(k-2)\Ind{a\neq a_L}} \P(\BB_{L+1})
    ,
    \label{e:MMd}
\end{align}
where for the last step, we note that for $G\in\mathcal{G}$, we have that $|\pd a \cap \pd a_L|\le 2$ if $a\neq a_L$.

Recall that the definition of $\GG_{L+1}$ does not involve the updates on $\pd a_L \cup \pd a$. The result for $\MM^\tc_{L+1}$ follows a similar argument to that of \eqref{e:MMd}.
\end{proof}

\begin{proof}[Proof of Lemma~\ref{l:Ecyclebranch}]
    The first two cases follow from a similar argument to that of Lemma~\ref{l:Eminbranch} with overlap $m_{L+1}=k$ and $m_{L+1}\le 2$, respectively. Here we only present the proof of the third case, leaving the first two as an exercise. Fix $(v,a)$ such that $a\neq a_L$ and $\pd a\cap \cyc^+(a_L,v_L) = \varnothing$. Let $\gamma_{L+1} = \gamma_{L+1}(v,a;\gamma_L)$. We can write
\begin{align}
    \int_{t_L}^\infty \P(\GG_{L+1}(t))\P(\BB_{L+1}(t))  dt
    &\le \int_{t_L}^{t_L+ k} \P(\GG_{L+1}(t))dt
    +\int_{t_L+k}^{\infty} \P(\BB_{L+1}(t)) dt.
    \nonumber\\&\le
    k \P(\GG_{L+1}(t_L+ k))
    + \P(T_{+}(a_{L+1};a_L,t_L)>t_L+k)
    \label{e:GGBB}
\end{align}
Since in a cycle step $|\pd a_L\cap \pd a_{L+1}| = 1$, the second term on the RHS of \eqref{e:GGBB} can be bounded by $e^{-k}$.
For the first term, we enumerate over all possible cycles containing $a_L,a_{L+1}$. Fix some cycle $(\ta_0\tv_0\ta_1\dots\ta_m\tv_m)$ with $\tilde a_0=a_L$ and $\tilde a_m=a_{L+1} $. Let $s_0\equiv t_L $ and inductively define $s_{i} = T(\tilde v_{i};s_{i-1})$, for all $1 \le i \le m-2$.  Denote $\delta_i\equiv s_{i}-s_{i-1} $ and $S\equiv \sum_{i=1}^{m-2}\delta_i $. Then $\delta_1,\ldots,\delta_{m-2}$ are i.i.d.~Exp($1$) r.v.'s. In particular, interpreting the $\delta_i$'s as spacings between arrivals of a rate 1 Poisson process, we get that $\P(S \le k)= \P(N_k \ge m-2) $, where    $N_k$ has a Poisson distribution of parameter $k$. In conclusion,
\[
    \P(\tv_1,\dots,\tv_{m-2} \textup{ are sequentially updated during } (t_L,t_L+k))
=\P(S \le k )   = \P(N_k \ge m-2).
\]
Noting that there are at most $(\Delta k)^m$  cycles of length $m$ containing $a_L$ finishes the proof.
\end{proof}

We now prove Lemma~\ref{l:Elocalbranch}. Fix $\gamma_L\in \Gamma_L$ and $v,v'\in a_L$.  We define the events $\MM^\tpd_{\ell}$, $\MM^\tpd_{L+1}(t')$ and $\MM^\tpd_{L+2}(t)$  in a similar fashion as $\MM^\tpd_{\ell}$, $\MM^\tpd_{L+1}(t)$ in the proof of Lemma~\ref{l:Ecyclebranch}. Observe that  any $\gamma_{L+2} \equiv \gamma_{L+2}(v,v',t,t')$, must satisfy that $L+1,L+2\in I_\tpd$. By Campbell's theorem,
\begin{align*}
    \E[N^\proj_2\mid \gamma_{L}\in\tGm_{\proj,L}]
    &=
    \E\Big[
        \sum_{\substack{t> t':(v,t,1),(v',t',1)\in \xi}}
        \Ind{\MM^\tpd_{L+1}(t')}
        \cdot
        \Ind{\MM^\tpd_{L+2}(t)}
    \ \Big|\
        \NN_{L}
    \Big]
    \\&= \frac{1}{4}
    \int_{t_L}^\infty
    \int_{t_L}^t
    \P(\MM^\tpd_{L+2}(t), \MM^\tpd_{L+1}(t')
    \mid \NN_{L}, \A_{L+1}(t'), \A_{L+2}(t))
    dtdt'.
\end{align*}
\begin{proof}[Proof of Lemma~\ref{l:Elocalbranch}]
    Following a similar argument of Lemma~\ref{l:pathrecur3} we can show that for each $\gamma_{L+2}(v,v',t,t')\in\Gamma_{L+2}$,
\begin{align}
    &\P(\MM^\tpd_{L+2}(t), \MM^\tpd_{L+1}(t')
    \mid \NN_{L}, \A_{L+1}(t'), \A_{L+2}(t))
    \nonumber\\&\le \P(\BB_{L+1}(t'),\BB_{L+2}(t))
    \cdot \P(\DD^2_{L+2}(t)
        \mid \NN^\circ_L, \CC'_{L+2}(t), \BB_{L+1}(t'),\BB_{L+2}(t)))
    \nonumber\\&\le 2^{1-k}\P(\BB_{L+1}(t'),\BB_{L+2}(t))
    ,
    \label{e:BLBL}
\end{align}
where in the second step we ignored the event $\DD^2_{L+1}(t')$ and in the last step we used the independency between $Y_{t}(a_L)$ and $Y_{t_L}(a_{L})$ given $\CC'_{L+2}(t)$.

Now let $T_1 \equiv T_+(a_L;a_L,t_L)$ and $T_2 \equiv T_+(a_L;a_L,T_1)$. By monotonicity of deactivation time, event $\BB_{L+2}(t)\subseteq\set{T_2>t}$. Integrating the RHS of \eqref{e:BLBL} over $t,t'$, we have
\[
    \E[N^\proj_2\mid \gamma_{L}\in\tGm_{\proj,L}]
    \le 2^{-(k+1)}
    \int_{t_L}^\infty
    (t-t_L)\P(T_2>t)
    dt
    = 2^{-(k+1)} \E (T_2-t_L)^2
    .
\]
Both $(T_1-t_L)$ and $(T_2-T_1)$ are distributed as the maximum of $k$ i.i.d.\ Exp(1) random variables and they are independent with each other. Therefore a very crude bound gives
\[
    \E (T_2-t_L)^2 \le 4\E(T_1-t_L)^2
    \le 4k^2 \E_{X\sim\textup{Exp}(1)}[X^2]
    \le 8k^2
    .
\]
Plugging the last inequality into \eqref{e:BLBL} concludes the proof.
\end{proof}

\section{From sampling to counting}\label{s:FPRAS}
In this section we derive Corollary \ref{t:FPRAS} from our main result.  The corollary follows from the rapid mixing of the Markov chain and following lemma, which is an analog of \cite[Appendix A]{liu2015fptas} and \cite[Lemma~5]{Bezakova2015}. Let $\mathcal{G}(k,\Delta)$ be the set of $k$-uniform hypergraphs of maximal degree $\Delta$.

\begin{lem}\label{l:FPRAS}
    Let $k$ and $\Delta$ be positive integers and $\mathcal{G}\subseteq \mathcal{G}(k,\Delta)$ be a subset of $\mathcal{G}(k,\Delta)$ that is closed under removal of hyperedges. Suppose that for each $\Delta, k$, there is a polynomial-time algorithm (in $n$ and $1/\epsilon$) that takes a hypergraph $G=(V,F,E)\in \mathcal{G}$ with at most $n$ vertices, a vertex $v\in V$ and an $\epsilon>0$ and outputs a quantity $p(v;G)$ satisfying
\[
    \left|
        \frac{p(v;G)}
        {\P_{G}(\sigma_v = 0)}
        - 1
    \right|
    < \epsilon
    .
\]
with probability $1-\epsilon/n$, where $\sigma$ is a uniformly sampled independent set on $G$. Then there exists an FPRAS which approximates $Z(G)$ for all hypergraphs in $\mathcal{G}$.
\end{lem}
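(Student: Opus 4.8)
The plan is to realize $Z(G)$ as a telescoping product of single-site marginals over sub-hypergraphs obtained from $G$ by deleting hyperedges only --- so that we never leave $\mathcal{G}$ --- then estimate each marginal with the hypothesized algorithm and control the multiplicative error propagation.

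Fix an arbitrary ordering $v_1,\dots,v_n$ of $V$. Set $G_1:=G$, and for $1\le j\le n$ let $G_{j+1}$ be $G_j$ with every hyperedge incident to $v_j$ deleted; equivalently, $G_{j+1}$ is $G$ with every hyperedge meeting $\{v_1,\dots,v_j\}$ deleted. Since $\mathcal{G}$ is closed under removal of hyperedges, every $G_j\in\mathcal{G}$, and each has at most $n$ vertices. The identity to establish is
\[
    Z(G)=\prod_{j=1}^{n}\frac{1}{\P_{G_j}(\sigma_{v_j}=0)}.
\]
To see it, note that a configuration with $\sigma_{v_j}=0$ automatically satisfies every hyperedge incident to $v_j$, so $\{\sigma\in\Omega(G_j):\sigma_{v_j}=0\}=\{\sigma\in\Omega(G_{j+1}):\sigma_{v_j}=0\}$; and since $v_j$ is isolated in $G_{j+1}$, the latter set has cardinality $Z(G_{j+1})/2$. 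Hence $\P_{G_j}(\sigma_{v_j}=0)=Z(G_{j+1})/(2Z(G_j))$, and the product telescopes using $Z(G_{n+1})=2^n$ (no hyperedge survives). Two elementary facts will be used alongside: each factor is well-conditioned, $\P_{G_j}(\sigma_{v_j}=0)\in[\tfrac12,1]$, because the map flipping $v_j$ to $0$ injects the independent sets with $\sigma_{v_j}=1$ into those with $\sigma_{v_j}=0$; and for $G'\in\mathcal{G}$ obtained from $G$ by deleting hyperedges the value $\P_{G'}(\sigma_v=0)$ is unaffected by the resulting isolated vertices, so it is exactly the quantity the given algorithm approximates on input $(v,G')$.

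Given $\epsilon\in(0,1]$, run the hypothesized algorithm on each pair $(v_j,G_j)$ with accuracy parameter $\epsilon':=\epsilon/(4n)$, obtaining $p_j:=p(v_j;G_j)$, and output $\widehat Z:=\prod_{j=1}^n p_j^{-1}$. A union bound over the $n$ calls shows that with probability at least $1-n\cdot(\epsilon'/n)=1-\epsilon/4\ge 3/4$ every estimate satisfies $|p_j/\P_{G_j}(\sigma_{v_j}=0)-1|<\epsilon'$. On this event $\widehat Z/Z(G)=\prod_{j}\bigl(\P_{G_j}(\sigma_{v_j}=0)/p_j\bigr)$ lies between $(1+\epsilon')^{-n}$ and $(1-\epsilon')^{-n}$, hence between $1-\epsilon$ and $1+\epsilon$, since $(1-\epsilon')^{-n}\le e^{2n\epsilon'}=e^{\epsilon/2}\le 1+\epsilon$ and $(1+\epsilon')^{-n}\ge 1-n\epsilon'\ge 1-\epsilon$ for $\epsilon\le 1$. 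The total running time is $n$ times the cost of one call, which is polynomial in $n$ and $1/\epsilon'=4n/\epsilon$, hence polynomial in $n$ and $1/\epsilon$; thus $\widehat Z$ is an FPRAS for $Z$.

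The only genuinely delicate point --- and the reason for conditioning on $0$'s and telescoping through hyperedge \emph{deletions} rather than the more familiar hyperedge additions --- is that pinning a vertex to $1$ shrinks the incident hyperedges and would take us outside the $k$-uniform class $\mathcal{G}$, whereas pinning to $0$ merely removes hyperedges and keeps us inside $\mathcal{G}$. Everything else is the standard self-reducibility bookkeeping, combined with the bound $\P(\sigma_v=0)\ge\tfrac12$ that keeps the product stable under multiplicative perturbations of size $O(\epsilon/n)$.
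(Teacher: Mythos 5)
Your proof is correct and follows essentially the same self-reducibility argument as the paper: telescope $Z(G)$ through marginals $\P_{G_j}(\sigma_{v_j}=0)$ on sub-hypergraphs obtained by conditioning vertices to $0$, use $\P(\sigma_v=0)\ge\tfrac12$ for stability, and run the estimator with accuracy $O(\epsilon/n)$ per factor. The only (cosmetic) difference is that you delete hyperedges while keeping the pinned vertices as isolated vertices and absorb the resulting $2^n$, whereas the paper deletes the vertices as well; both yield the same product formula and the same error bookkeeping.
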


\begin{proof}
The proof is a slight modification from the argument in \cite[Lemma~5]{Bezakova2015} which we only include here for the sake of completeness.
Fix $\epsilon > 0$ and $G=(V,F,E)\in \mathcal{G}\subseteq \mathcal{G}(k,\Delta)$. Without loss of generality, we suppose $V\equiv [n] \equiv \set{1,\dots,n}$. Let $G_0 \equiv G$ and for each $1\le i\le n-1$, let $G_{i}$ be the remaining hypergraph after removing the first $i$ vertices  $ [i] \equiv \set{1,\dots,i}$ along with all hyperedges containing  at least one vertex in $[i]$.  The set of independent sets on $G_i$ can be naturally identified with the subset of independent sets on $G$ satisfying $\sigma\vert_{\set{1,\dots,i}} =  \vec 0$. We have
\begin{align*}
    \frac{1}{Z(G)}
    &= \P_G(\vec \sigma = \vec 0)
    = \P(\sigma_{1} = 0)
        \prod_{i=2}^{n}
        \P_G(
            \sigma_{i}=0
        \mid
        \sigma_{[i-1]}=0
        )
    = \prod_{i=1}^{n}
    \P_{G_{i-1}}(\sigma_{i} = 0).
\end{align*}
By assumption, the set $\mathcal{G}$ is closed under the removal of hyperedges, thus if $G\in \mathcal{G}$, then so is every $G_i$, for all $1\le i \le n-1$. Consequently, we can compute (in $\textup{poly}(n,1/\epsilon)$ time) quantities $p_i\equiv p(i;G_{i-1})$ such that
\[
    \left|
        \frac{p(i;G_{i-1})}
        {\P_{G_{i-1}}(\sigma_i = 0)}
        - 1
    \right|
    < \frac{\epsilon}{2n}
    ,
\]
with probability $1-\epsilon/n$.
Letting $\hat{Z}(G) \equiv \prod_{i=1}^n p(i;G_{i-1})$ be the output concludes the proof.
\end{proof}

\begin{proof}[Proof of Corollary~\ref{t:FPRAS}] 
    Following the statement of Lemma~\ref{l:FPRAS}, we set $\mathcal{G} = \mathcal{G}(k,\Delta)$ and describe the $p(v,G)$-outputting algorithm as follows: Given hypergraph $G$ and  $n,\epsilon>0$, let $\tmix=O(n\log n)$ be the mixing time of the Glauber dynamics of hypergraph independent set on $G$ and let $N,M$ be two large integers to be determined shortly. We run the Glauber dynamics $N$ times for $M\cdot \tmix$ steps, starting from the all zeros configuration,  and record the configuration at time $M\cdot \tmix$  of the $r$'th sample by $\sigma^{(r)}$.
We set $M \equiv1+ 2\lceil|\log\epsilon|/\log 2 \rceil$. By the submultiplicity property $\tmix(2^{-i}) \le i  \tmix $ \cite[page 55]{levin2009markov} we have that
\[
    \big|
        \P(\sigma^{(1)}_v=0) - \P_G(\sigma_v = 0)
    \big|
\le2 \|\P(\sigma^{(1)}=\cdot) - \P_G(\sigma = \cdot) \|_{\mathrm{TV}}    \le 2^{-(M-1)} \le \epsilon^2 < \epsilon/4
    ,
\]
where $\sigma$ is a uniformly chosen independent set. We set $N \equiv 32\lceil | \log\epsilon | \rceil/\epsilon^2$ and
$
    p(v;G) \equiv
    \frac{1}{N} \sum_{r=1}^N
    \Ind{\sigma^{(r)}_v = 0}
    .
$
 By Azuma-Hoeffding's inequality,
\[
    \P\big(|p(v;G)-\P(\sigma^{(1)}_v=0)| > \epsilon/4 \big)
    \le e^{-N \epsilon^2/32}
    \le \epsilon.
\]
Note that $\P_G(\sigma_v = 0)\ge 1/2$ for any hypergraph $G=(V,F)$ and all $v \in V$.  Combining the last two displays then guarantees that $|p(v;G)/\P(\sigma_v=0)-1| < \epsilon$ with probability $1-\epsilon$. The total running time of our algorithm is $N\cdot M \cdot \tmix$, which by Theorem~\ref{t:mixing} is $\textup{poly}(n,1/\epsilon)$.
\end{proof}
 
\bibliographystyle{plain}
\bibliography{refs}

\begin{thebibliography}{10}

\bibitem{Bezakova2015}
Ivona Bez{\'{a}}kov{\'{a}}, Andreas Galanis, Leslie~Ann Goldberg, Heng Guo, and
  Daniel {\v{S}}tefankovi{\v{c}}.
\newblock Approximation via correlation decay when strong spatial mixing fails.
\newblock In {\em 43rd International Colloquium on Automata, Languages, and
  Programming, {ICALP} 2016, July 11-15, 2016, Rome, Italy}, pages 45:1--45:13,
  2016.

\bibitem{bordewich2006stopping}
Magnus Bordewich, Martin Dyer, and Marek Karpinski.
\newblock Stopping times, metrics and approximate counting.
\newblock In {\em International Colloquium on Automata, Languages, and
  Programming}, pages 108--119. Springer, 2006.

\bibitem{bordewich2008path}
Magnus Bordewich, Martin Dyer, and Marek Karpinski.
\newblock Path coupling using stopping times and counting independent sets and
  colorings in hypergraphs.
\newblock {\em Random Structures \& Algorithms}, 32(3):375--399, 2008.

\bibitem{guo2016uniform}
Heng Guo, Mark Jerrum, and Jingcheng Liu.
\newblock Uniform sampling through the {Lov\'asz Local Lemma}.
\newblock {\em arXiv preprint arXiv:1611.01647}, 2016.

\bibitem{jerrum1993polynomial}
Mark Jerrum and Alistair Sinclair.
\newblock Polynomial-time approximation algorithms for the {I}sing model.
\newblock {\em SIAM Journal on computing}, 22(5):1087--1116, 1993.

\bibitem{levin2009markov}
David~Asher Levin, Yuval Peres, and Elizabeth~Lee Wilmer.
\newblock {\em Markov chains and mixing times}.
\newblock American Mathematical Soc., 2009.

\bibitem{liu2015fptas}
Jingcheng Liu and Pinyan Lu.
\newblock {FPTAS} for counting monotone {CNF}.
\newblock In {\em Proceedings of the Twenty-Sixth Annual ACM-SIAM Symposium on
  Discrete Algorithms}, pages 1531--1548. SIAM, 2015.

\bibitem{lubetzky2010cutoff}
Eyal Lubetzky and Allan Sly.
\newblock Cutoff phenomena for random walks on random regular graphs.
\newblock {\em Duke Mathematical Journal}, 153(3):475--510, 2010.

\bibitem{lubetzky2015information}
Eyal Lubetzky and Allan Sly.
\newblock Information percolation and cutoff for the stochastic {Ising} model.
\newblock {\em Journal of the American Mathematical Society}, 2015.

\bibitem{moitra2016approximate}
Ankur Moitra.
\newblock Approximate counting, the {Lovasz Local Lemma} and inference in
  graphical models.
\newblock {\em arXiv preprint arXiv:1610.04317}, 2016.

\bibitem{moser2009constructive}
Robin~A Moser.
\newblock A constructive proof of the lov{\'a}sz local lemma.
\newblock In {\em Proceedings of the forty-first annual ACM symposium on Theory
  of computing}, pages 343--350. ACM, 2009.

\bibitem{sinclair2014approximation}
Alistair Sinclair, Piyush Srivastava, and Marc Thurley.
\newblock Approximation algorithms for two-state anti-ferromagnetic spin
  systems on bounded degree graphs.
\newblock {\em Journal of Statistical Physics}, 155(4):666--686, 2014.

\bibitem{sly2010computational}
Allan Sly.
\newblock Computational transition at the uniqueness threshold.
\newblock In {\em Foundations of Computer Science (FOCS), 2010 51st Annual IEEE
  Symposium on}, pages 287--296. IEEE, 2010.

\bibitem{sly2012computational}
Allan Sly and Nike Sun.
\newblock The computational hardness of counting in two-spin models on
  d-regular graphs.
\newblock In {\em Foundations of Computer Science (FOCS), 2012 IEEE 53rd Annual
  Symposium on}, pages 361--369. IEEE, 2012.

\bibitem{weitz2006counting}
Dror Weitz.
\newblock Counting independent sets up to the tree threshold.
\newblock In {\em Proceedings of the thirty-eighth annual ACM symposium on
  Theory of computing}, pages 140--149. ACM, 2006.

\end{thebibliography}
\end{document}